\documentclass[11pt,a4paper]{article}
\usepackage[utf8]{inputenc}
\usepackage[english]{babel}
\usepackage{empheq}
\usepackage{amsmath}
\usepackage{amsfonts}
\usepackage{amssymb}
\usepackage{amsthm}
\usepackage{graphicx}
\usepackage[framemethod=TikZ]{mdframed}
\usepackage[margin=1in]{geometry}
\usepackage{hyperref}
\usepackage{authblk}
\usepackage{mathtools}
\usepackage{esint}
\usepackage{subfigure}




\usepackage{xcolor}

\newcommand{\ep}{\epsilon}

\newtheorem{theorem}{Theorem}[section]
\newtheorem{lemma}[theorem]{Lemma}
\newtheorem{claim}[theorem]{Claim}
\newtheorem{proposition}[theorem]{Proposition}
\newtheorem{corollary}[theorem]{Corollary}

\newtheorem{Theorem}{Theorem}

\theoremstyle{definition}
\newtheorem{definition}[theorem]{Definition}

\theoremstyle{remark}
\newtheorem{remark}[theorem]{Remark}

\numberwithin{equation}{section}

\begin{document}
\title{Asymptotic spreading of KPP reactive fronts in heterogeneous shifting environments II: Flux-limited solutions}
\author[King-Yeung Lam, Gregoire Nadin and X. Yu]{King-Yeung Lam\thanks{Department of Mathematics, The Ohio State University, Columbus, OH 43210, USA (lam.184@osu.edu)}\, ,
	 Gr\'{e}goire Nadin\thanks{Institut Denis Poisson, Universit\'{e} d’Orl\'{e}ans, Universit\'{e} de Tours, CNRS, Orl\'{e}ans, France (gregoire.nadin@cnrs.fr)}\, and
	  Xiao Yu\thanks{School of Mathematical Sciences, South China Normal University, Guangzhou 510631, China (xymath19@m.scnu.edu.cn)}
}


	\maketitle
	
	\begin{abstract}
	    We consider the spreading dynamics of the Fisher-KPP equation in a shifting environment, by analyzing the limit of the rate function of the solutions.  
For environments with a weak monotone condition, it was demonstrated in a previous paper that the rate function converges to the unique Ishii solution of the underlying Hamilton-Jacobi equations. In case the environment does not satisfy the weak monotone condition, we show that the rate function is then characterized by the 
     Hamilton-Jacobi equation with a dynamic junction condition, which depends additionally on the generalized eigenvalue derived from the environmental function.  Our results applies to the case when the environment has multiple shifting speeds, and clarify the connection with previous results on nonlocally pulled fronts and forced traveling waves.
	\end{abstract}

\tableofcontents

	\section{Introduction}
	
	

Consider the KPP equation with heterogeneous coefficients
	\begin{equation}
		\begin{cases}
			u_t -  u_{xx} = u(g(x-c_1 t)-u) &\text{ for }(x,t)\in \mathbb{R}\times (0,\infty),\\
			u(x,0) = u_0(x) &\text{ for }x\in \mathbb{R},
		\end{cases}
		\label{eq:1.1}
	\end{equation}
	where $c_1\in \mathbb{R}$,  $g$ is a continuous and positive function on $\mathbb{R}$ with $g(\pm \infty)>0$, and $u_0\in L^\infty(\mathbb{R})$ is nonneagative and compactly supported. This type of equations models the population growth in a shifting habitat. They may arise from the ecological question whether the species can survive in the midst of climate change \cite{Potapov2004,Berestycki2009,Li2014}. In a previous work \cite{Lam2022asymptotic}, we considered   
\eqref{eq:1.1} as a special case of a class of integro-differential equations with a distributed time-delay in heterogeneous shifting environments. Under the assumption $\sup g\le\max\{g(\pm\infty)\}$, 
 we utilized the theory of viscosity solutions of Hamilton-Jacobi equations, specifically the uniqueness of viscosity solutions in the sense of Ishii \cite{Ishii1985hamilton}, to obtain the complete explicit formulas of rightward spreading speeds for \eqref{eq:1.1} in terms of $c_1$ as well as leftward spreading speeds. However, the assumption $\sup g\le\max\{g(\pm\infty)\}$ was crucial in \cite{Lam2022asymptotic}, as uniqueness of viscosity solutions in the sense of Ishii can no longer be expected if it was relaxed  \cite{barles2023modern,Giga2013hamilton}.
 
To recover uniqueness and further develop the Hamilton-Jacobi approach, we will utilize the notion of flux-limited solution, which was recently introduced by Imbert and Monneau \cite{Imbert2017flux,Imbert2017quasi} to study Hamilton-Jacobi equations on networks. 
Throughout our paper, we are mainly working on \eqref{eq:1.1} which has one shifting speed $c_1$, and we are able to determine the spreading properties of \eqref{eq:1.1} for any $g$ for which $g(\pm \infty)$ exist. Furthermore, our treatment naturally extends to the case of multiple shifting speeds; see Section \ref{extension} for the precise statements.  Before stating our main results, we provide a brief review on some related works. 
	
 The asymptotic speed of spread, or spreading speed in short, is a crucial 
quantity in spatial ecology that determines the expansion boundary of a population under the joint influence of the diffusion rate  and environmental conditions. For simplicity, the diffusion rate has been normalized to $1$ in \eqref{eq:1.1}. In a homogeneous environment, i.e. $g(\cdot)\equiv g_0$ for a positive constant $g_0$, model \eqref{eq:1.1} reduces to the classical Fisher-KPP equation.  
{A well known result of Kologorov et al.  \cite{Kolmogorov1937}  states} that there is a number $c_* = 2\sqrt{g_0}>0$ such that 
 $$
 \lim_{t\to\infty} \sup_{x \geq ct} u(t,x) = 0\quad \text{ for $c\in(c_*,+\infty)$, \quad and }\quad \liminf_{t\to\infty} \inf_{0 \leq x \leq c t} u(t,x)>0 \quad \text{ for }c\in(0,c_*). 
 $$ 
Moreover, the same value also coincides with the minimal wave speed of traveling wave solutions $U(x-ct)$ of \eqref{eq:1.1}.
This result was later extended to more general nonlinearity and in higher dimensions in \cite{Weinberger1978}. It is also remarkable that in homogeneous environment, the spreading speed can be obtained via local information, where  $c_*=2\sqrt{g_0}$ is the smallest value of $c>0$ such that
$$
-c \phi_x + \phi_{xx} + g_0 \phi = \Lambda \phi
$$
admits a zero eigenvalue.


Since then, spreading speed for various reaction-diffusion models including Fisher-KPP equations are intensively investigated \cite{Weinberger1982,LiangZhao2007, BHN2008, Shen2010,Nadin2012,FYZ2017,Nadin2020}.   Among those, an elaborate method was proposed by Weinberger \cite{Weinberger1982} to establish the existence of spreading speeds for discrete-time order-preserving recursions with a monostable structure and its characterization as the minimal speed of traveling wave solutions. These results were subsequently generalized to monotone semiflows \cite{LiangZhao2007,FYZ2017}. 
 By combining the Hamilton-Jacobi approach \cite{Evans1989pde} and homogenization ideas \cite{lions1987homogenization,evans1992periodic}, Berestycki and Nadin \cite{Nadin2012,Nadin2020} showed the existence of spreading speed for spatially almost periodic, random stationary ergodic, and other general environments, whose speed was characterized as a min-max formula in terms of suitable notions of generalized principal eigenvalues in unbounded domains.

The heterogeneous shifting environment, which is the focus of this article, was introduced by Potapov and Lewis \cite{Potapov2004} and Berestycki et al. \cite{Berestycki2009} to investigate the impact of shifting climate on the persistence of one or several focal species. As a simple formulation, the temporal-spatial heterogeneity $x-c_1t$ was incorporated into various diffusion models including \eqref{eq:1.1} for the single species, where $c_1$ is regarded as the shift velocity of climate.  For \eqref{eq:1.1},  the propagation dynamics have been rigorously explored in  \cite{Potapov2004, Berestycki2009,Berestycki2008reaction,Berestycki2009reaction} for the case of a moving patch of a finite length, and in Li et al. \cite{Li2014} for a retreating semi-infinite patch. The latter problem is a special case of \eqref{eq:1.1} in case $g$ is increasing and $g(-\infty)<0<g(+\infty)$, where it is proved that the species persists if and only if it can spread faster than the environment with the spreading speed being given by the KPP formula $c_*= 2\sqrt{g(+\infty)}$.  Hu et al. \cite{HSL2019} provided further investigation without assuming $g(-\infty)<0$, and proved that the rightward spreading speed $c_*=2\sqrt{g(+\infty)}$ if $c_1\le 2\sqrt{g(+\infty)}$ and $c_*=2\sqrt{g(-\infty)}$ if $c_1\gg 1$.  A shifting environment can also arise in other ways. Holzer and Scheel \cite{Holzer2014accerlated} considered a partially decoupled reaction-diffusion system of two equations, where a wave solution for the first equation induces a shifting environment for the second one. See also \cite{Ducrot2021asymptotic,Dong2021persistence,Girardin2019,Liu2021stacked} for further results on competition or prey-predator systems. Similar modeling idea was also adapted in Fang et al. \cite{FLW2016},  where \eqref{eq:1.1} was also retrieved from an SIS disease model to study whether pathogen can keep pace with its host. If $g$ is non-increasing, then \eqref{eq:1.1} becomes a special case of the cylinder problem studied by Hamel \cite{Hamel1997reaction}. Du et al.\cite{DWZ18} proposed a free boundary version of \eqref{eq:1.1}, see also \cite{DHL23}.  Yi and Zhao \cite{Yi2020, YZDCDS23} established a general theory on the propagation dynamics without spatial translational invariance. See also \cite{Faye2022asymptotic} for a model with shifting diffusivity.  
We refer to Wang et al. \cite{WLFQ2022} for a survey on reaction-diffusion models in shifting environments.
 
   Indeed,  the shifting habitat does bring about new spreading phenomena in  case that the intrinsic growth rate function $g$ is strictly positive everywhere. When $0<\inf{g}<\sup {g}\le \max\{g(\pm\infty)\}$, the results of \cite{Holzer2014accerlated,Lam2022asymptotic} clarified that, for a certain range of shift speed $c_1$, the initially compactly supported population spreads at a supercritical speed $c_* > 2\sqrt{g(-\infty)}$ 
in a phenomenon called non-local pulling \cite{Holzer2014accerlated, Girardin2019}. This falls into the biological scenario when the species fails to keep up with the climate shifting, but is still influenced by the presence of a favorable habitat which is at a distance of order $t$ ahead of the front.
 When $\sup {g}>\max\{g(\pm\infty)\}$, then  Holzer and Scheel \cite{Holzer2014accerlated} proved the existence of forced traveling wave solution, which moves in the same speed as the environment. Subsequently, 
    Berestycki and Fang \cite{Berestycki2018forced}  classified such forced traveling wave solutions and proved global attractivity results.

Our main contribution, in the case of \eqref{eq:1.1}, where the environment has a single speed $c_1$, is to completely determine the existence of rightward spreading speed $c_*$ and its dependence on the environmental speed $c_1$, whenever $g(\pm \infty)$ exist and $\inf g >0$. Moreover, our framework provides the context in which the spreading results in \cite{Holzer2014accerlated,Lam2022asymptotic} (where $c_* < c_1$ with nonlocal pulling) connects with those in \cite{Berestycki2018forced} (where $c_* = c_1$). Furthermore, our method readily generalizes to the case when the environment has more than one shifting speed (Subsection \ref{extension}).

 \subsection*{Organization of the paper}
Our approach is to study the spreading speed via the asymptotic limit of the rate function, following \cite{Evans1989pde} (see also \cite[Chapter 29]{barles2023modern} and \cite{Lam2022asymptotic}).
However, the consideration of a shifting habitat leads to a discontinuous Hamiltonian. Also, the rate function has unbounded and discontinuous initial data since the initial population was compactly supported. In our previous work \cite{Lam2022asymptotic},
the solution concept of Ishii was used and the corresponding comparison principle was established. This is discussed in Subsection \ref{sec:ishii}.  However, the previous results in \cite{Lam2022asymptotic} is not applicable in case $\sup g > \max\{g(\pm \infty)\}$, because
then the invasion is enhanced by the specific profile of $g$, and 
the solutions in the sense of Ishii are non-unique.  To overcome the non-uniqueness of viscosity solution and connect with the results regarding forced waves, we need to incorporate further information of \eqref{eq:1.1} in deriving the limiting Hamilton-Jacobi equation. To this end, we recall some results of an eigenvalue problem depending on the coefficient $g(\cdot)$. Then 
in Subsection \ref{sec:fl}, we introduce the concept of a flux-limited solution and prove the comparison principle needed in our context.  In Subsection \ref{sec:main}, we state our main theorems and extensions of our results. {We also discuss the viscosity solutions in sense of Ishii and recall some earlier results from \cite{Lam2022asymptotic} in Subsection \ref{sec:ishii}.}

{Section \ref{sec:applications} presents the application of Theorem \ref{thm:2.13}, and we place it immediately after stating our main theorem. In this section, we take Theorem \ref{thm:2.13} for granted and apply it to obtain several explicit formulas for the spreading speed in terms of $g(+\infty), g(-\infty)$ and $\Lambda_1$, where $\Lambda_1$ is the principal eigenvalue given in \eqref{eq:lambda1}. This provides a general context connecting previous results of \cite{Berestycki2018forced, Holzer2014accerlated} concerning forced wave (where $c_* = c_1$) and of \cite{Lam2022asymptotic} concerning nonlocal pulling (where $c_*<c_1$ but is influenced by the presence of the shifting environment).}

{In Section \ref{sec:aux}, we present preliminary results. In particular, we recall the properties of $\Lambda_1$ in Proposition \ref{prop:2.1} (Subsection \ref{sec:eigen}), as well as a few technical results for Hamilton-Jacobi equations.}

{Section \ref{sec:proof} is devoted to the proof of the main results, namely, Proposition \ref{prop:2.10}, Corollary \ref{prop:2.12}, and Theorem \ref{thm:2.13}.
This section only logically depends on Proposition \ref{prop:2.1} (proved in Appendix \ref{sec:prop21}), Lemma \ref{cor:im1rho'} concerning continuity of subsolutions (proved in Subsection \ref{sec:4.4}), the critical slope lemmas inspired by Imbert and Monneau \cite{Imbert2017flux} (Lemmas \ref{cor:subsolr} and \ref{cor:supersolr}, proved in Appendix \ref{corsubsup}), as well as the comparison principle (proved in Appendix \ref{sec:comp}). }

{Finally, the appendices presents the proofs of the technical results mentioned above. }

\section{Preliminaries and Statements of Results}

	The concepts of maximal and minimal spreading speeds are introduced in \cite[Definition 1.2]{Hamel2012spreading} for a single species;  see also \cite{Garnier2012maximal,Liu2021stacked}. In our setting, we define
	\begin{equation}\label{eq:speeds}
		\begin{cases}
			\smallskip
			\overline{c}_*=\inf{\{c>0~|~\limsup \limits_{t\rightarrow \infty}\sup\limits_{x>ct} u(t,x)=0\}},\\
			\smallskip
			\underline{c}_*=\sup{\{c>0~~|\liminf \limits_{t\rightarrow \infty}\inf\limits_{0<x<ct} u(t,x)>0\}}, 
		\end{cases}
	\end{equation}
	where $\overline{c}_*$ and $\underline{c}_*$  are the maximal and minimal (rightward) spreading speeds of species $u$, respectively. If $\overline{c}_* = \underline{c}_*>0$, we say that the population has the (rightward) spreading speed given by the common value $c_*$.

	Motivated by the large deviations technique \cite{Evans1989pde,Freidlin1985limit}, we introduce, for fixed solution $u$ of \eqref{eq:1.1}, the scaling $u^\ep(t,x) = u(\frac{t}{\ep},\frac{x}{\ep})$ with $\epsilon>0$. The resulting function $u^\ep(t,x)$ 
	satisfies the following equation:
	\begin{equation}
		\begin{cases}
			u^\ep_t - \ep u^\ep_{xx} = \ep^{-1} u^\ep \left( g\left( \frac{x - c_1t}{\ep} \right) - u^\ep\right) &\text{ for }(t,x) \in (0,\infty)\times \mathbb{R},\\
			u^\ep(0,x) = u_0(x/\ep) &\text{ for }x \in \mathbb{R}.
		\end{cases}
	\end{equation}
	Observe that the spreading speed of the population is given by $c_*>0$ if and only if  
	\begin{equation}\label{eq:spread1}
		\lim_{\ep \to 0}u^\ep(t,x) \to 0 \quad \text{ in }C_{loc}(\{x>c_* t\}),\quad \liminf_{\ep \to 0}u^\ep(t,x) >0 \quad \text{ in }C_{loc}(\{0\le x<c_* t\}). 
	\end{equation}
To fully characterize these limits, we introduce the  following eigenvalue problem:
	\begin{equation}
		\Phi'' +  g(y)\Phi =\Lambda \Phi  \quad \text{ for }y \in \mathbb{R}.
		\label{eq:pev0}
	\end{equation}
	In this paper, we define the principal eigenvalue ${\Lambda_1}$ of \eqref{eq:pev0} as follows:
	\begin{equation}\label{eq:lambda1}
		{\Lambda_1}:= \Lambda_1(g) = \inf \left\{ \Lambda \in \mathbb{R}:~ \exists \phi \in C^2_{loc}(\mathbb{R}),~\phi>0,~ \phi'' + g(y)\phi \leq \Lambda \phi \text{ in } \mathbb{R} \right\}.
	\end{equation}
	This and several other notions of principal eigenvalues are analyzed in \cite{Berestycki2014generalizations}.
We will recall some basic properties of $\Lambda_1$ and the associated positive eigenfunction in Proposition \ref{prop:2.1}.

	As we shall see in Section \ref{sec:applications}, the four quantities $c_1$, $\Lambda_1$, $g(+\infty)$ and $g(-\infty)$ completely determine the spreading speed.


	\subsection{Flux-limited solution due to Imbert and Monneau}\label{sec:fl}
	
	To determine the exact spreading speed in Theorem \ref{thm:main0}, we will study the rate function $w^\ep(t,x):=-\ep \log u^\ep(t,x)$. More precisely, 
	we will show that $w^\ep(t,x) \to t \hat\rho(x/t)$ in $C_{loc}$, where the limit $\hat\rho$ is to be interpreted using the notion of {\it flux-limited solutions} introduced by Imbert and Monneau \cite{Imbert2017flux}. This notion is well-adapted to catch the influence of the coefficients along a discontinuity at $x=c_1$.
 
 We begin with a few notations regarding the effective Hamiltonian and effective junction condition.
	\begin{definition}
		For each $A \in \mathbb{R}$, define the flux-limited junction condition to be
		\begin{equation}\label{eq:FA}
			F_A(\tilde{p}_+, \tilde{p}_-) = \max\{A, H^-(c_1+,\tilde{p}_+), H^+(c_1-, \tilde{p}_-)\},
		\end{equation}
		where $H^+(c_1\pm,\cdot)$ and $H^-(c_1\pm,\cdot)$ are 
		\begin{equation}\label{eq:Hplus}
			H^+(c_1\pm,p)= \begin{cases}
				-\frac{|c_1|^2}{4} + g(\pm \infty),\,\,   &\text{ for }p \leq c_1/2,\\
				-c_1p + p^2 + g(\pm \infty)\,\, &\text{ for }p > c_1/2,
			\end{cases} 
		\end{equation}
		\begin{equation}\label{eq:Hminus}
			H^-(c_1\pm,p) =\begin{cases}
				-c_1p + p^2 + g(\pm \infty)\,\, &\text{ for }p \leq c_1/2, \\
				-\frac{|c_1|^2}{4} + g(\pm \infty) \,\, &\text{ for }p > c_1/2.
			\end{cases} 
		\end{equation}
  Note that they are, respectively, the increasing and decreasing parts (in the variable $p$) of 
  $$
  H(s,p) = -sp + p^2 + \chi_{\{s > c_1\}} g(+\infty) + \chi_{\{s \leq  c_1\}} g(-\infty).
  $$
	\end{definition}
	The information of the profile of $g$ can be incorporated into the Hamilton-Jacobi equation by an additional {\it junction condition} as follows:
	\begin{equation}\label{eq:fl}
		\begin{cases}
			\min\{\rho, \rho + H(s,\rho')\} = 0 \quad \text{ for }s> 0, s \neq c_1, \\
			\min\{\rho(c_1), \rho(c_1) + F_A(\rho'(c_1+), \rho'(c_1-))\}=0, 
		\end{cases}
	\end{equation}
	where $A$ and $F_A(\tilde{p}_+, \tilde{p}_-)$ are given in \eqref{eq:A} and \eqref{eq:FA} respectively. 
	The above equations are to be considered using  {\it piecewise} $C^1$ test functions whose left and right derivative at $c_1$ are well defined but maybe unequal:
	\begin{equation}\label{eq:c1pw}
		C^1_{pw} = \{\psi \in C((0,\infty)):~ C^1((0,c_1]) \cap C^1([c_1,\infty)).
	\end{equation}
	\begin{definition}\label{def:rhoflsol}
		Let $A \in \mathbb{R}$ be given.  \begin{itemize}  
			\item[{\rm (a)}] We say that $\hat\rho:(0,\infty) \to \mathbb{R}$ is a FL-subsolution of \eqref{eq:fl} provided (i) $\hat\rho$ is upper semicontinuous, and (ii) if $\hat\rho - \psi$ (with $\psi \in C^1_{pw}$) attains a local maximum point at some $s_0 >0$ such that $\hat\rho(s_0) >0$, then
			$$
			\hat\rho(s_0) + H(s_0,\psi'(s_0)) \leq 0 \quad \text{ in case }s_0 \neq c_1,
			$$
			$$
			\hat\rho(c_1) + F_A(\psi'(c_1+), \psi'(c_1-))\leq 0 \quad \text{ in case }s_0 = c_1.
			$$   
			\item[{\rm (b)}]  We say that $\hat\rho:(0,\infty) \to \mathbb{R}$ is a FL-supersolution of \eqref{eq:fl} provided (i) $\hat\rho$ is lower semicontinuous, (ii) $\hat\rho \geq 0$ for all $s >0$, and (iii) if $\hat\rho - \psi$ (with $\psi \in C^1_{pw}$) attains a local minimum point at some $s_0 >0$, then
			$$
			s_0 \neq c_1 \quad \Longrightarrow \quad  \hat\rho(s_0) + H(s_0,\psi'(s_0)) \geq 0; 
			$$
			$$
			s_0 = c_1 \quad \Longrightarrow \quad \hat\rho(c_1) + F_A(\psi'(c_1+), \psi'(c_1-))\geq 0. 
			$$   
			\item[{\rm (c)}] We say that $\hat\rho$ is a FL-solution of \eqref{eq:fl} if it is both FL-subsolution and FL-supersolution of \eqref{eq:fl}. 
		\end{itemize}
	\end{definition}
	Next, we discuss the uniqueness of FL-solution of \eqref{eq:fl} by first showing the following comparison principle.
	\begin{proposition}\label{prop:2.10}
		Let $A \in \mathbb{R}$ be given. If $\underline\rho$ and $\overline\rho$ are, respectively, the FL-subsolution and FL-supersolution of \eqref{eq:fl}, and such that
		\begin{equation}\label{eq:2.8rr}
			\underline\rho(0) \leq \overline\rho(0) \quad \text{ and }\quad \lim_{s \to +\infty} \frac{\overline\rho(s)}{s} = +\infty, 
		\end{equation}
		then $\underline\rho(s) \leq \overline\rho(s)$ in $[0,+\infty)$. 
	\end{proposition}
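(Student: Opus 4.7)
The plan is to argue by contradiction via the Crandall--Ishii doubling-of-variables technique, the main subtlety being the junction condition at $s = c_1$ where admissible test functions are only piecewise $C^1$. Suppose for contradiction that $M := \sup_{s \geq 0}(\underline\rho(s) - \overline\rho(s)) > 0$. The key observation is that at any point $s_0$ where $\underline\rho(s_0) > \overline\rho(s_0) \geq 0$, the obstacle part of $\min\{\rho, \rho + H(\cdot,\rho')\}$ is automatically strict for $\underline\rho$, so the FL-subsolution condition reduces to the pure Hamilton--Jacobi inequality $\underline\rho(s_0) + H(s_0, \psi'(s_0)) \leq 0$, or its $F_A$-junction version at $s_0 = c_1$; the FL-supersolution always satisfies the corresponding Hamilton--Jacobi inequality by definition.

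I would first use the growth assumption $\overline\rho(s)/s \to +\infty$ to localize by a small linear perturbation. Set
$$
M_\delta := \sup_{s \geq 0}\bigl( \underline\rho(s) - \overline\rho(s) - \delta s \bigr),
$$
which, for $\delta$ small enough, is still positive, finite, and attained at some finite $\bar s_\delta > 0$ (the case $\bar s_\delta = 0$ being excluded by $\underline\rho(0) \leq \overline\rho(0)$). It suffices to contradict $M_\delta > 0$ for each small $\delta$, then let $\delta \to 0$. Then I would double the variables,
$$
\Phi_\epsilon(s,t) := \underline\rho(s) - \overline\rho(t) - \frac{|s-t|^2}{2\epsilon} - \delta s,
$$
with maximizer $(s_\epsilon, t_\epsilon)$. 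Standard estimates yield $(s_\epsilon, t_\epsilon) \to (\bar s_\delta, \bar s_\delta)$ and $|s_\epsilon - t_\epsilon|^2/\epsilon \to 0$. If $\bar s_\delta \neq c_1$, then for $\epsilon$ small both $s_\epsilon, t_\epsilon$ sit on one side of $c_1$, $H$ is smooth there, and the classical viscosity comparison argument — exploiting the explicit form $H(s,p) = -sp + p^2 + g(\pm\infty)$ and in particular the linearity of $s \mapsto H(s,p)$ — produces $\delta \bar s_\delta + o_\epsilon(1) \leq 0$, contradicting $\delta \bar s_\delta > 0$.

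The crux is the junction case $\bar s_\delta = c_1$, where the quadratic penalty induces a $C^1$ test function but the subsolution/supersolution inequalities at $c_1$ admit piecewise $C^1$ tests with possibly different left and right slopes, and one must deal with $F_A = \max\{A, H^-(c_1+,\cdot), H^+(c_1-,\cdot)\}$. Here I would invoke the Imbert--Monneau critical slope lemmas (Lemmas \ref{cor:subsolr} and \ref{cor:supersolr}), which reduce testing at the junction to slopes lying on the appropriate monotone branches $H^\pm(c_1\pm,\cdot)$; this provides admissible piecewise $C^1$ test functions whose left/right derivatives at $c_1$ match those generated by the quadratic penalty, so that $F_A$ evaluates on those monotone branches and the extra constant $A$ is absorbed. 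Subtracting the resulting FL-subsolution inequality for $\underline\rho$ at $c_1$ from the Hamilton--Jacobi supersolution inequality for $\overline\rho$ at $t_\epsilon$ (or the symmetric configuration, depending on whether $s_\epsilon$ or $t_\epsilon$ equals $c_1$) again produces $\delta c_1 \leq o_\epsilon(1)$, contradicting $\delta \bar s_\delta > 0$. The main obstacle is precisely this slope-matching across the discontinuity of $H$ at $c_1$, which is exactly the scenario the critical slope lemmas were designed to handle.
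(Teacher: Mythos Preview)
Your overall strategy is natural, but the paper takes a different route: it first translates FL-sub/supersolutions into sub/supersolutions of a \emph{Kirchhoff} junction problem $\rho'(c_1-)-\rho'(c_1+)=B$ (Lemmas~\ref{lem:5.10} and~\ref{lem:5.11}), with $B$ determined from $A$ via the monotone branches $H^\pm$, and then proves comparison for the Kirchhoff problem (Theorem~\ref{thm:comparison}). The junction step in that proof does use the critical slope lemmas as you anticipate, but the final contradiction is extracted from a purely algebraic lemma of Lions--Souganidis (Lemma~\ref{lem:lions}), not from a direct subtraction of the two viscosity inequalities. The case $A\le A_0:=\max\{g(\pm\infty)\}-c_1^2/4$ is handled separately by reducing to Ishii solutions and quoting \cite{Lam2022asymptotic}.

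Your junction argument has a genuine gap. When $\bar s_\delta=c_1$, the doubled maximizers $(s_\epsilon,t_\epsilon)$ may sit on \emph{opposite} sides of $c_1$, or one at $c_1$ and one off it; then the subsolution inequality involves $H(c_1+,\cdot)$ while the supersolution inequality involves $H(c_1-,\cdot)$ or the full $F_A$, and these differ by the fixed gap $g(+\infty)-g(-\infty)$ or by the limiter $A$. The critical slope lemmas give one-sided inequalities $\underline\rho(c_1)+H(c_1\pm,\,\cdot\,)\le 0$ and $\overline\rho(c_1)+H(c_1\pm,\,\cdot\,)\ge 0$ at possibly \emph{different} slopes on each branch, but they do not by themselves ``absorb $A$''. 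Concretely, if the supersolution's junction condition is saturated by the $A$ term you only get $\overline\rho(c_1)\ge -A$, while the best subsolution bound from a single branch is $\underline\rho(c_1)\le -\min_p H(c_1\pm,p)= c_1^2/4-g(\pm\infty)$; subtracting gives $\underline\rho(c_1)-\overline\rho(c_1)\le A-A_0$, which is \emph{positive} precisely when $A>A_0$, and no contradiction follows. Closing this requires simultaneously playing the left and right critical slopes against each other and against the Kirchhoff parameter $B$, which is exactly what Lemma~\ref{lem:lions} does; your sketch does not indicate any mechanism for this.

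A secondary point: the localization $M_\delta=\sup(\underline\rho-\overline\rho-\delta s)$ being finite and attained needs $\underline\rho(s)-\overline\rho(s)-\delta s\to-\infty$, but no growth control on $\underline\rho$ is assumed. The paper addresses this separately (Step~\#1 of Theorem~\ref{thm:comparison}) by truncating $\underline\rho$ to a subsolution with linear growth, using Rademacher's theorem to select a good truncation point.
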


	\begin{corollary}\label{prop:2.12}
		For each $A\in \mathbb{R}$, \eqref{eq:fl} has a unique FL-solution $\hat\rho_A$ which satisfies the following boundary conditions (in a strong sense) 
  \begin{equation}\label{eq:ishii2}
		\rho(0) = 0 \quad \text{ and }\quad \lim_{s \to +\infty} \frac{\rho(s)}{s} =+\infty.
	\end{equation}
	\end{corollary}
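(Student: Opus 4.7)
Uniqueness is immediate from Proposition \ref{prop:2.10}. Indeed, if $\hat\rho_1,\hat\rho_2$ are two FL-solutions of \eqref{eq:fl} both satisfying \eqref{eq:ishii2}, then I can apply the comparison principle twice: once with $\underline\rho=\hat\rho_1$, $\overline\rho=\hat\rho_2$, and once with the roles swapped. The boundary value $\rho(0)=0$ gives equality at $s=0$ and the superlinear growth condition at infinity holds for both, so I conclude $\hat\rho_1=\hat\rho_2$.

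For existence, the plan is to run Perron's method, with the FL-aware machinery of Imbert--Monneau \cite{Imbert2017flux}. The key is to produce two global barriers. The trivial choice $\underline\rho_0\equiv 0$ is an FL-subsolution, because the subsolution condition $\rho(s_0)+H\le 0$ (or its junction analogue) is triggered only at points where $\rho>0$, which never occurs. For the upper barrier I would take $\overline\rho_0(s)=s^2/4$. Away from the junction, $\overline\rho_0+H(s,s/2)=g(\pm\infty)>0$. At $s=c_1$, any test function $\psi\in C^1_{pw}$ touching $\overline\rho_0$ from below must satisfy $\psi'(c_1-)\ge c_1/2\ge\psi'(c_1+)$, so $H^-(c_1+,\psi'(c_1+))$ and $H^+(c_1-,\psi'(c_1-))$ are both minorized by $-c_1^2/4+\min\{g(\pm\infty)\}$, giving $\overline\rho_0(c_1)+F_A(\psi'(c_1+),\psi'(c_1-))\ge \min\{g(\pm\infty)\}>0$ regardless of $A$. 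To be able to later verify the growth condition at infinity, I would also introduce a superlinear lower barrier of parabolic shape, e.g.\ $\underline\rho_1(s)=\max\{0,\tfrac{1}{4}(s-c_0)_+^2-\max\{g(\pm\infty)\}\}$ with $c_0>c_1$ chosen so that $\underline\rho_1$ vanishes in a neighborhood of $c_1$ (so the junction condition is vacuous) and verify the subsolution inequality on its support by a direct calculation analogous to the one for $\overline\rho_0$.

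With these barriers in hand, define
\[
\hat\rho_A(s):=\sup\bigl\{\rho(s):\rho\text{ is an FL-subsolution of \eqref{eq:fl} with }\underline\rho_1\le\rho\le\overline\rho_0\bigr\}.
\]
The standard Perron machinery, adapted to the junction condition as in \cite{Imbert2017flux,Imbert2017quasi}, then yields that the upper semicontinuous envelope $(\hat\rho_A)^*$ is an FL-subsolution while the lower semicontinuous envelope $(\hat\rho_A)_*$ is an FL-supersolution. The barrier sandwich $\underline\rho_1\le(\hat\rho_A)_*\le(\hat\rho_A)^*\le\overline\rho_0$ gives $(\hat\rho_A)^*(0)\le 0\le(\hat\rho_A)_*(0)$ and $(\hat\rho_A)_*(s)/s\to\infty$, so Proposition \ref{prop:2.10} applied to this pair forces $(\hat\rho_A)^*\le(\hat\rho_A)_*$. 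Combined with the reverse inequality that always holds, $\hat\rho_A$ is continuous and is the sought FL-solution, and the boundary conditions in \eqref{eq:ishii2} follow from the same sandwich.

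The main obstacle is not the soft comparison argument but the Perron step itself: one must check that the supremum of FL-subsolutions (with upper semicontinuous envelope) preserves the junction-subsolution inequality involving $F_A$, and that the classical bump-perturbation producing a strict subsolution near a supposed failure point of $(\hat\rho_A)_*$ respects the separate constraints on $\psi'(c_1-)$ and $\psi'(c_1+)$. These facts have been carefully worked out in the Imbert--Monneau framework; my task is to observe that they carry over to the min-form equation $\min\{\rho,\rho+H\}=0$ used here, which introduces only the mild bookkeeping that the subsolution condition is vacuous where $\rho\le 0$ and that the supersolution condition automatically forces $\rho\ge 0$.
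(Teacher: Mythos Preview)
Your uniqueness argument and barrier constructions are correct, and Perron's method in the Imbert--Monneau framework does give a valid existence proof. The paper, however, takes a genuinely different route: existence is obtained as a byproduct of the analysis of the parabolic problem rather than from an abstract Perron construction. Concretely, the half-relaxed limits $\rho^*,\rho_*$ of the rate function $w^\epsilon=-\epsilon\log u^\epsilon$ are shown (Proposition~\ref{prop:5.3}) to form an FL-sub/supersolution pair with flux limiter $A=\Lambda_1-c_1^2/4$, and Lemma~\ref{lem:5.2} supplies the boundary conditions $\rho^*(0)=0$ and $\rho_*(s)/s\to\infty$. Proposition~\ref{prop:2.10} then forces $\rho^*\le\rho_*$, while $\rho^*\ge\rho_*$ holds by construction of the half-relaxed limits, so the two coincide and furnish the FL-solution. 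For $A\le A_0$ the paper reduces, via Proposition~\ref{prop:equi}, to the Ishii problem already treated in~\cite{Lam2022asymptotic}.

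Your approach is more self-contained from the viscosity-solution side and treats all $A\in\mathbb{R}$ uniformly, with no reference to the reaction--diffusion equation. The paper's approach instead leverages machinery it must build anyway for Theorem~\ref{thm:2.13}: existence of $\hat\rho_A$ comes at no extra cost, and the same argument simultaneously yields the convergence $w^\epsilon(t,x)\to t\hat\rho_A(x/t)$. The trade-off is that existence for a \emph{prescribed} $A>A_0$ is tied to producing a coefficient $g$ with $\Lambda_1(g)=A+c_1^2/4$, a point the paper leaves implicit.
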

	These two results will be proved in Subsection \ref{subsec:5.4}.

	
	We apply the half-relaxed limit method, due to Barles and Perthame \cite{BarlesPerthame87, BarlesPerthame88}, to pass to the (upper and lower) limits of $w^\epsilon(t,x)$.	Moreover, we can show that $w^\ep(t,x) \to t\hat\rho_A(x/t)$ in $C_{loc}$, where the {\it flux limiter} $A$ is identified by 
	\begin{equation}\label{eq:A}
		A = \Lambda_1 - \frac{|c_1|^2}{4}.
	\end{equation} 
{The spreading speed $c^*$ will then be fully characterized by $\hat\rho_A$ with the specific flux-limiter $A$.}

\begin{remark}
		Note that $A= \Lambda_1 - \frac{|c_1|^2}{4}$ could be treated as the principal eigenvalue of 
		 of $\Psi'' - c_1\Psi' + g(y) \Psi = A\Psi$ in the sense that:
		\begin{equation}\label{eq:lambda2}
			A = \inf \left\{ \Lambda \in \mathbb{R}:~ \exists \phi \in C^2_{loc}(\mathbb{R}),~\phi>0,~ \phi''-c_1\phi' + g(y)\phi \leq \Lambda \phi \text{ in } \mathbb{R} \right\},
		\end{equation}
		which quantifies the influence of the coefficient $g(x-c_1t)$ in the moving coordinate $y=x-c_1 t$.
	\end{remark}

\subsection{Main results}\label{sec:main}
	
	We are now in position to state our main result. 
	
	\begin{Theorem}\label{thm:2.13}
		Let $u$ be a solution of \eqref{eq:1.1}. Then the following statements hold.
		\begin{itemize}
			\item[{\rm(a)}] The spreading speed $c_*$ of $u$ exists, and is given by
			\begin{equation}\label{eq:sa}
				c_* = \hat{s}_A = \sup\{s \in [0,\infty): \hat\rho_A(s) = 0\},
			\end{equation}
			where $\hat\rho_A$ is the unique FL-solution of \eqref{eq:fl}with $A = \Lambda_1- \frac{c_1^2}{4}$ that also satisfies the boundary conditions \eqref{eq:ishii2}.
			\item[{\rm(b)}] Furthermore, if $\Lambda_1  = \max\{g(\pm\infty)\}$, then $c_* = \hat{s}_{base} = \sup\{s: \hat\rho_{base}(s) = 0\},$ where $\hat\rho_{base}$ is the unique viscosity solution of \eqref{eq:ishii1}--\eqref{eq:ishii2} in the sense of Ishii (see Definition \ref{def:1.2}). 
		\end{itemize}

	\end{Theorem}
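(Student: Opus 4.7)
The plan is to analyze the rate function $w^\epsilon(t,x) := -\epsilon \log u^\epsilon(t,x)$ via the Barles--Perthame half-relaxed limit method. Setting $\bar w := \limsup^* w^\epsilon$ and $\underline w := \liminf_* w^\epsilon$, I would first use standard KPP comparisons against the constant-coefficient envelope (with rate $\sup g$) to verify that $w^\epsilon$ is locally uniformly bounded in $\epsilon$, and that both half-relaxed limits inherit a homogeneous scaling of the form $\bar w(t,x) = t\bar\rho(x/t)$ and $\underline w(t,x) = t\underline\rho(x/t)$, so it suffices to characterize the one-variable profiles $\bar\rho, \underline\rho$. Away from $s = c_1$, a standard Evans--Souganidis type computation (as in \cite{Evans1989pde, Lam2022asymptotic}) shows that $\bar\rho$ is a viscosity subsolution and $\underline\rho$ a viscosity supersolution of $\min\{\rho, \rho + H(s, \rho')\} = 0$, with the Hamiltonian picking up the asymptotic values $g(\pm\infty)$ on either side of $c_1$.

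The heart of the proof is deriving the flux-limited junction condition at $s = c_1$ with exactly $A = \Lambda_1 - c_1^2/4$. This requires two ingredients. First, the critical-slope lemmas (Lemmas \ref{cor:subsolr} and \ref{cor:supersolr}) restrict the admissible test functions at the junction to the piecewise class $C^1_{pw}$ and rephrase the junction inequality in terms of $F_A$. Second, two-sided exponential barriers built from the principal eigenfunction $\Phi$ of \eqref{eq:pev0} pin down the value of $A$: a direct calculation shows that $e^{At}e^{c_1(x-c_1 t)/2}\Phi(x-c_1 t)$ solves the linearized equation $u_t - u_{xx} = g(x-c_1 t) u$, since $A = \Lambda_1 - c_1^2/4$ is precisely the principal eigenvalue of the moving-frame operator $\Psi'' - c_1\Psi' + g(y)\Psi$ (cf.\ the remark following \eqref{eq:A}). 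After rescaling, this produces a linear supersolution for $u^\epsilon$, and, after a KPP-type cut-off, a matching compactly supported subsolution; the resulting two-sided bounds on $u^\epsilon$ near $x = c_1 t$ force $\bar\rho$ to be an FL-subsolution and $\underline\rho$ an FL-supersolution of \eqref{eq:fl} with the stated flux limiter.

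Once this is established, Proposition \ref{prop:2.10} yields $\bar\rho \le \underline\rho$; since the reverse inequality holds by definition, $w^\epsilon(t,x) \to t\hat\rho_A(x/t)$ locally uniformly, where $\hat\rho_A$ is the unique FL-solution from Corollary \ref{prop:2.12}. Translating this back through \eqref{eq:spread1} proves part (a): the set $\{\hat\rho_A > 0\}$ records exponential decay of $u^\epsilon$ and $\{\hat\rho_A = 0\}$ records a uniform positive lower bound, so $c_* = \hat s_A$. Part (b) is then a compatibility check: when $\Lambda_1 = \max\{g(\pm\infty)\}$, the value $A = \max\{g(\pm\infty)\} - c_1^2/4$ coincides with $\min_p \max\{H(c_1+,p), H(c_1-,p)\}$, so the flux limiter becomes redundant and $F_A$ reduces to the bare junction Hamiltonian; the FL-solution $\hat\rho_A$ therefore agrees with the unique Ishii solution $\hat\rho_{base}$ of \cite{Lam2022asymptotic}.

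The main obstacle I anticipate is the barrier construction in the second paragraph: showing that the flux limiter is \emph{exactly} $A = \Lambda_1 - c_1^2/4$ and no larger. The supersolution direction is immediate from the explicit eigenfunction solution of the linearized problem, but the matching subsolution must be genuinely nonlinear and built from the \emph{global} principal eigenfunction $\Phi$ over a window that widens with $t$, while remaining compatible with the compactly supported initial data $u_0$. This is precisely where the nonlocal spectral information \eqref{eq:lambda1} enters the otherwise local Hamilton--Jacobi framework, and it is also where the purely Ishii-based analysis of \cite{Lam2022asymptotic} breaks down in the regime $\sup g > \max\{g(\pm\infty)\}$.
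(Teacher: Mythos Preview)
Your overall strategy matches the paper's: half-relaxed limits of $w^\epsilon$, 1-homogeneity to reduce to profiles $\rho^*,\rho_*$, Evans--Souganidis arguments away from $c_1$, eigenfunction-based barriers at the junction to pin down $A=\Lambda_1-c_1^2/4$, then Proposition~\ref{prop:2.10} to conclude; part~(b) via the equivalence of FL-solutions at $A=A_0$ with Ishii solutions (Proposition~\ref{prop:equi}) is also the paper's route.

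Your anticipated obstacle, however, is misplaced. The barrier arguments (Lemmas~\ref{lem:flsub} and~\ref{lem:flsupp}) are purely \emph{local}: one works in a small box $Q_r$ around $(1,c_1)$, and the boundary data on $\partial Q_r$ come from the strict touching of the test function, not from $u_0$. There is no global subsolution to build, and the nonlinear term $e^{-\varphi^\epsilon/\epsilon}$ is dispatched simply by noting $\varphi^\epsilon>0$ on $Q_r$ (since $\rho^*(c_1)>0$). Both directions use the \emph{same} barrier $\varphi^\epsilon(t,x)=\tfrac{A-\lambda}{4}(t-1)^2 - t\lambda - \epsilon\log\Psi\!\big((x-c_1t)/\epsilon\big)$ with $\Psi=e^{-c_1y/2}\Phi_1$; the sign of $A-\lambda$ decides whether it is a super- or subsolution of the $w^\epsilon$ equation. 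The critical-slope lemmas do not ``restrict test functions to $C^1_{pw}$'' (that is already the FL definition); rather they (i) show that the $H^\pm$ parts of the junction inequality follow automatically from the away-from-junction viscosity property, reducing the task to the single inequality $A\lessgtr\lambda$, and (ii) justify replacing an arbitrary test function by the canonical piecewise-linear $\phi_0$ with slopes $\mu_\pm$, which is exactly what the eigenfunction barrier approximates as $\epsilon\to0$. One technical point you do not mention: when $\Lambda_1=\max\{g(\pm\infty)\}$ the eigenfunction $\Phi_1$ need not decay, so the FL-supersolution barrier (Lemma~\ref{lem:flsupp}) requires a perturbation $g\mapsto g_\eta$ with $\Lambda_1(g_\eta)>\max\{g(\pm\infty)\}$ (Proposition~\ref{prop:2.1}(c)).
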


	\begin{remark}
In Section \ref{sec:applications}, we will give explicit formulas of $c_*$ in terms of of $g(\pm\infty)$, $c_1$ and $\Lambda_1$.
	\end{remark}

\begin{remark}
After the research of this work has finished, the preprint of Giletti et al. \cite{girardin2024spreading} was brought to our attention. In this work, the authors treated the case when 
\begin{equation}\label{e.1003gg}
    g(t,x) = r_1 \chi_{\{x<A(t)\}} + r_2 \chi_{\{A(t) \leq x < A(t) + L\}} + r_3 \chi_{\{x\geq A(t)+L\}},
\end{equation}
where $t\mapsto A(t)$ is either linear or slowly oscillating between two shifting speeds. Interestingly, they obtained the formula of Theorem \ref{thm:main1} assuming that $g$ is given by \eqref{e.1003gg} with $A(t) = c_1 t$.

Furthermore, it was remarked that their construction can be generalized to treat \eqref{eq:1.1} provided that $g(y)$ is constant near $y=\pm \infty$. They also conjectured that the last condition may not be necessary. Their proof is based on the direct construction of super/subsolution for the parabolic problem.

Incidentally, our main result can be considered as an affirmative answer of their conjecture, by passing to the limiting Hamilton-Jacobi problem with junction condition. It is worth mentioning that (i) we need only  $A(t) = c_1t + o(t)$ and (ii) we merely require $g(\pm \infty)$ exist (but not necessarily constant for $|x| \gg 1$). In particular, the spreading speed can be determined by the value of the eigenvalue $\Lambda_1$ and the exact shape of $g$ is not important.

\end{remark}

	\subsection{Monotonicity of the flux-limited solutions}
	
{Since $F_A(\tilde{p}_+, \tilde{p}_-)$ is monotone increasing in the variable $A$, the effect of the flux limiter $A$ is as follows. 
	
	\begin{corollary} \label{prop:2.11}
		Let $A \in \mathbb{R}$ and $\hat\rho_A$ be the {unique} FL-solution of \eqref{eq:fl}-\eqref{eq:ishii2}.
		\begin{itemize}
			\item[{\rm(a)}] If $A \geq A'$, then 
			$\hat\rho_A(s) \leq \hat\rho_{A'}(s)$ for all $s \geq 0$. In particular, the free boundary point $\hat s_A$ is monotone increasing with respect to $A$, i.e. $\hat{s}_A \geq \hat{s}_{A'}$, where
			\begin{equation}
				\hat{s}_A:= \sup\{s: \hat\rho_A(s) = 0\}.
			\end{equation}
			\item[{\rm(b)}] If $ A_0:= \max\{g(\pm \infty)\}-\frac{c_1^2}{4}$ and $A\le A_0$, then $\hat\rho_A$ coincides with the unique viscosity solution of \eqref{eq:ishii1}-\eqref{eq:ishii2} in the sense of {Ishii}. In case  $A>A_0$, it is a   viscosity subsolution (might not be a supersolution) in the sense of Ishii.
		\end{itemize}
	\end{corollary}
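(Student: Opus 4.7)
For part (a), the plan is to exploit the pointwise monotonicity $F_A \geq F_{A'}$ whenever $A \geq A'$, which is immediate from \eqref{eq:FA} as a maximum involving $A$, combined with the comparison principle in Proposition \ref{prop:2.10}. Concretely, if $A \geq A'$, then the FL-subsolution junction inequality $\hat\rho(c_1) + F_A \leq 0$ is stronger than $\hat\rho(c_1) + F_{A'} \leq 0$, while the interior condition in \eqref{eq:fl} does not involve $A$. Hence $\hat\rho_A$ is automatically a FL-subsolution of the $A'$-problem. Since $\hat\rho_A(0) = 0 = \hat\rho_{A'}(0)$ and $\hat\rho_{A'}(s)/s \to +\infty$ by \eqref{eq:ishii2}, the hypotheses of Proposition \ref{prop:2.10} are met and $\hat\rho_A \leq \hat\rho_{A'}$ on $[0,\infty)$. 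Non-negativity of both then gives $\{\hat\rho_{A'} = 0\} \subseteq \{\hat\rho_A = 0\}$, so $\hat{s}_{A'} \leq \hat{s}_A$.

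For part (b), the strategy is to establish two pointwise comparisons between $F_A(p,p)$ and the Ishii envelopes of the Hamiltonian at $s = c_1$. Without loss of generality assume $g(+\infty) \geq g(-\infty)$, so that $A_0 = g(+\infty) - c_1^2/4$ and the envelopes are $H^*(c_1,p) = -c_1 p + p^2 + g(+\infty)$ and $H_*(c_1,p) = -c_1 p + p^2 + g(-\infty)$. A case analysis on $p$ relative to $c_1/2$, using \eqref{eq:Hplus}--\eqref{eq:Hminus}, produces: \emph{(i)} $F_A(p,p) \geq H_*(c_1,p)$ for all $p$ and all $A \in \mathbb{R}$; \emph{(ii)} $F_A(p,p) \leq H^*(c_1,p)$ for all $p$ provided $A \leq A_0$. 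Since $C^1 \subset C^1_{pw}$, a FL-subsolution tested against $C^1$ functions with $\psi'(c_1+) = \psi'(c_1-) = p$ satisfies $\hat\rho(c_1) + F_A(p,p) \leq 0$, and then (i) delivers $\hat\rho(c_1) + H_*(c_1,p) \leq 0$, i.e.\ the Ishii-subsolution condition. Analogously, under $A \leq A_0$, (ii) and the FL-supersolution condition yield $\hat\rho(c_1) + H^*(c_1,p) \geq 0$, the Ishii-supersolution condition. Away from $c_1$, the FL and Ishii conditions already coincide because $H$ is continuous there. Consequently, for $A \leq A_0$, $\hat\rho_A$ is both Ishii-sub and Ishii-super of \eqref{eq:ishii1} with the boundary conditions \eqref{eq:ishii2}, and the uniqueness of Ishii solutions recalled in Subsection \ref{sec:ishii} from \cite{Lam2022asymptotic} forces $\hat\rho_A = \hat\rho_{base}$.

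For $A > A_0$, the inequality (i) is unaffected, so the same argument still produces the Ishii-subsolution property of $\hat\rho_A$. However (ii) fails: at the critical slope $p = c_1/2$, the flat portions of $H^-(c_1+,\cdot)$ and $H^+(c_1-,\cdot)$ give $F_A(c_1/2, c_1/2) = \max\{A, A_0, g(-\infty) - c_1^2/4\} = A > A_0 = H^*(c_1, c_1/2)$, which means the FL-supersolution condition is strictly weaker than the Ishii-supersolution condition, and $\hat\rho_A$ need not verify the latter. The main technical obstacle is the case analysis supporting (i) and (ii): for $p \leq c_1/2$ one has $H^-(c_1+, p) = H^*(c_1, p)$ and $H^+(c_1-, p) = g(-\infty) - c_1^2/4 \leq A_0$, whereas for $p \geq c_1/2$ the roles reverse, with $H^+(c_1-, p) = H_*(c_1, p)$ and $H^-(c_1+, p) = A_0$. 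Plugging these into $F_A = \max\{A, H^-, H^+\}$ and invoking $A \leq A_0 \leq H^*$ in regime (ii) yields both assertions after elementary bookkeeping.
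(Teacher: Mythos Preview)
Your proof is correct. For part (a) it is essentially identical to the paper's one-line argument: the paper notes that $\hat\rho_{A'}$ is a FL-supersolution for the flux limiter $A$ (since $F_A \geq F_{A'}$) and applies Proposition~\ref{prop:2.10}; you equivalently observe that $\hat\rho_A$ is a FL-subsolution for the flux limiter $A'$, which is the dual formulation of the same monotonicity.

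For part (b), the paper's proof of the corollary does not address it explicitly---the content is deferred to Proposition~\ref{prop:equi} in Subsection~\ref{subsec:5.2}, which establishes the full equivalence between FL-solutions with $A=A_0$ and Ishii solutions using the critical slope lemmas. Your argument proves only the easy direction of that equivalence (FL $\Rightarrow$ Ishii), directly via the two pointwise inequalities (i) $F_A(p,p)\geq H_*(c_1,p)$ and (ii) $F_A(p,p)\leq H^*(c_1,p)$ when $A\leq A_0$, together with the inclusion $C^1\subset C^1_{pw}$. This is more elementary and self-contained, and it is all that the corollary actually requires, since uniqueness of the Ishii solution $\hat\rho_{base}$ is already available from \cite{Lam2022asymptotic}. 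The paper's route via Proposition~\ref{prop:equi} buys the converse implication (Ishii $\Rightarrow$ FL with $A=A_0$), which is needed elsewhere (e.g.\ in the proof of the first part of Theorem~\ref{thm:2.13}) but not here.
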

	\begin{proof}
		This is a direct consequence of Proposition \ref{prop:2.10}, since $\hat\rho_A(0)=\hat\rho_{A'}(0)=0$, $\hat\rho_{A'}$ is a FL-supersolution of \eqref{eq:fl} and satisfies \eqref{eq:ishii2}.
	\end{proof}
}


\subsection{Generalizations}\label{extension}
	
	Our arguments can also be applied in the following setting where $f=f(t,x,u)$ possesses multiple junction points $\mathcal{P}=\{c_i\}_{i=1}^n$ for some $0<c_1<c_2<...<c_n$. 
	\begin{equation}
		\begin{cases}
			u_t -  u_{xx} = f(t,x,u) &\text{ for }(x,t)\in \mathbb{R}\times (0,\infty),\\
			u(x,0) = u_0(x) &\text{ for }x\in \mathbb{R}.
		\end{cases}
		\label{eq:1.1prime}
	\end{equation}
	
	\begin{description}
		\item[{\rm(H1$'$)}] $f(t,x,0) = 0$  for all $x \in \mathbb{R}$ and $t\ge0$.   
		\item[{\rm(H2$'$)}] {There exists $g_i \in L^\infty(\mathbb{R})$ with $g_i(\pm \infty)>0$, $i=1,...,n$,}
  and for some $\delta_1>0$,  
		$$
		\lim_{t\to\infty} {\mathrm{ess\ sup}}_{|x-c_it|<\delta_1 t} \left|f_u\left(t,x,0 \right) - g_i(x - c_i t)\right|=0.
		$$
		\item[{\rm(H3$'$)}] There exist ${R},\underline{R}:[0,\infty) \to [0,\infty)$ such that $\inf \underline{R} >0$, 
		$R(s) = \underline{R}(s)$ a.e. and 
  		$$
		R(s)=\limsup_{\ep \to 0^+ \atop (t,x) \to (1,s)}f_u\left(\frac{t}{\ep},\frac{x}{\ep},0\right) 
		\quad \text{ and }\quad 
		\underline{R}(s)=\liminf_{\ep \to 0^+ \atop (t,x) \to (1,s)}f_u\left(\frac{t}{\ep},\frac{x}{\ep},0\right). 
		$$
  Moreover, 
   $R$ is locally monotone in $\mathbb{R} \setminus \{c_i\}_{i=1}^n$, i.e.
for each $s_0 \in \mathbb{R} \setminus \{c_i\}_{i=1}^n,$ 
$$
\hspace{-1cm}\text{either}\quad \liminf_{\delta \to 0} \inf_{s_0-\delta <s<s'<s_0+\delta} R(s)- R(s') \geq 0 \quad \text{ or }\quad  \limsup_{\delta \to 0} \sup_{s_0-\delta <s<s'<s_0+\delta} R(s)- R(s') \leq 0. 
$$

		\item[{\rm(H4$'$)}] For each $(t,x) \in \mathbb{R}_+\times \mathbb{R}$, $f(t,x,u)/u$ is non-increasing in $u>0$. 
		\item[{\rm(H5$'$)}] There exists $M>0$ such that $f(t,x,u) <0$ for $(t,x,u) \in \mathbb{R}_+\times  \mathbb{R} \times [M,\infty)$.
		\item[{\rm(H6$'$)}] For each $i=1,...,n$, 
		{let $\Lambda^{(i)}_1$ be the principal eigenvalue given by}
	\begin{equation}\label{eq: gevp-i}
		{\Lambda^{(i)}_1} = \inf \left\{ \Lambda \in \mathbb{R}:~ \exists \phi \in C^2_{loc}(\mathbb{R}),~\phi>0,~ \phi'' + {g_i(y)}\phi \leq  \Lambda \phi   \right\}.
	\end{equation}

	\end{description}
	\begin{theorem}\label{thm:main2}
		Given $\{c_i\}_{i=1}^n$, and $f(t,x,u)$ satisfying {\rm (H1$'$) -- (H6$'$)}.
		If $u_0$ is compactly supported and nontrivial, then the population spreads to the right at speed $c_*$, where 
		$$
		c_* = \sup\{s \geq 0:~ \rho^\dagger(s) = 0\},
		$$
		where
		$\rho^\dagger$ is the unique FL-solution of 
		\begin{equation}\label{eq:3.27}
			\begin{cases}
				\min\{\rho,\rho + H(s,\rho')\}=0 & \hspace{-1.8cm}\text{ for }s \in (0,\infty) \setminus\{c_i\},\\
				\min\{\rho(c_i), \rho(c_i) + F^{(i)}(\rho'(c_i+),\rho'(c_i-)) = 0 &\text{ for each }i,\\
				\rho(0)=0 \quad \text{ and }\quad  \lim\limits_{s\to+\infty}\rho(s)/s=+\infty.
			\end{cases}
		\end{equation}
		where $$H(s,p) = -sp + p^2 + R(s),$$ 
		$$
		F^{(i)}(\tilde{p}_+, \tilde{p}_-) = \max\left\{\Lambda^{(i)}_1 - \frac{c_i^2}{4},~H^-(c_i+,\tilde{p}_+),~H^+(c_i-,\tilde{p}_-)\right\}, 
		$$
		such that $H^\mp(c_i\pm,\cdot)$ are the decreasing/increasing parts of  $H(c_i\pm,\cdot)$ given by
		$$
		H^-(c_i+,p)=H(c_i+,\min\{p,c_i/2\}),\quad H^+(c_i-,p)=H(c_i-,\max\{p,c_i/2\}),
		$$
		and
		$\Lambda^{(i)}_1$ is the principal eigenvalue given by \eqref{eq: gevp-i}.
	\end{theorem}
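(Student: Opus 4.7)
The plan is to adapt the single-junction framework of Theorem \ref{thm:2.13} to the multi-junction setting by localizing the analysis near each $c_i$. I would first rescale via $u^\epsilon(t,x) = u(t/\epsilon, x/\epsilon)$ and form the rate function $w^\epsilon(t,x) = -\epsilon \log u^\epsilon(t,x)$. Standard sub/supersolution arguments based on (H1$'$), (H4$'$), and (H5$'$) furnish a locally uniform bound on $w^\epsilon$ on compact subsets of $\{t>0\}$, so that the half-relaxed limits $\overline{w} = \limsup^* w^\epsilon$ and $\underline{w} = \liminf^* w^\epsilon$ are well defined. The scaling invariance of the limiting coefficient $f_u(t/\epsilon, x/\epsilon, 0) \to R(x/t)$ granted by (H3$'$), together with the compact support of $u_0$, imply that both half-relaxed limits are self-similar, of the form $t\rho(x/t)$ for some $\rho:[0,\infty) \to [0,\infty]$, so that the problem reduces to an ODE/HJ problem on the spreading ray $s = x/t$.

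Second, I would show that $\overline\rho$ is an FL-subsolution and $\underline\rho$ is an FL-supersolution of \eqref{eq:3.27}. Off the junction set $\{c_i\}$, assumption (H3$'$) provides the local monotonicity needed to apply the standard half-relaxed-limits argument (as in \cite{Lam2022asymptotic}) to obtain $\min\{\rho, \rho + H(s,\rho')\} = 0$ on $(0,\infty)\setminus\{c_i\}_{i=1}^n$. At each junction $s = c_i$, assumption (H2$'$) says that in the moving frame $y = (x - c_i t)/\epsilon$ the rescaled equation is approximated by the single-junction problem with coefficient $g_i(y)$; the critical slope lemmas (Lemmas \ref{cor:subsolr} and \ref{cor:supersolr}), applied locally in a parabolic neighborhood of $(1, c_i)$ where the other $g_j$ ($j \neq i$) are irrelevant, then produce the flux limiter $A_i = \Lambda^{(i)}_1 - c_i^2/4$ and yield the junction condition $\rho(c_i) + F^{(i)}(\rho'(c_i+), \rho'(c_i-)) \gtreqless 0$. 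Since the $c_i$ are finitely many and isolated, the junction analyses are independent of one another.

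Finally, the identification $\overline\rho = \underline\rho = \rho^\dagger$ follows from a comparison principle for \eqref{eq:3.27} in the spirit of Proposition \ref{prop:2.10}. The extension from a single junction to $n$ junctions is essentially mechanical, since the test-function pathology at a junction is local: in the doubling-of-variables argument, if the maximum occurs at $(c_i, c_i)$ for some $i$, the single-junction flux-limited comparison machinery applies verbatim with limiter $A_i$; otherwise, the standard viscosity doubling argument suffices. Combined with the boundary conditions $\underline\rho(0) = 0 = \overline\rho(0)$ and the superlinear growth of $\overline\rho$ at infinity (which follows from $R \in L^\infty$ and the fact that $H(s,p) \to +\infty$ as $|p|\to\infty$), this gives $\underline\rho \geq \overline\rho$, hence equality. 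The spreading-speed identity $c_* = \sup\{s \ge 0: \rho^\dagger(s) = 0\}$ then follows exactly as in the proof of Theorem \ref{thm:2.13}(a). The main obstacle is rigorously deriving the junction condition at each $c_i$: this requires carefully localizing the critical slope lemmas to a neighborhood of $(1, c_i)$ small enough that the single-junction approximation $f_u \approx g_i((x - c_i t)/\epsilon)$ from (H2$'$) is valid, yet large enough to accommodate the construction of the test eigenfunction associated with $\Lambda^{(i)}_1$.
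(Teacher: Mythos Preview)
Your proposal is correct and follows essentially the same route the paper takes: the paper does not give a separate proof of Theorem~\ref{thm:main2}, treating it as a direct extension of the single-junction argument (Section~\ref{sec:proof}), with the multi-junction comparison principle already supplied in Appendix~\ref{sec:comp} (Theorem~\ref{thm:comparison}). Your localization strategy at each $c_i$, use of the eigenfunction test function for $\Lambda_1^{(i)}$, and appeal to the Kirchhoff/FL comparison are exactly what the paper's machinery is set up to do.

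One small correction: in your final paragraph you require superlinear growth of $\overline\rho$ (your subsolution), but the comparison principle needs it for the \emph{supersolution} $\underline\rho$; moreover, this growth does not follow from coercivity of $H$ but from the compact support of $u_0$ via $w_*(0,x)=+\infty$ and lower semicontinuity, as in Lemma~\ref{lem:5.2}.
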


\subsection{Related {optimal} control formulations}
{It is well known that the viscosity solutions of Hamilton-Jacobi equations correspond naturally to the value functions of certain optimal control problems, whereas the viscosity solutions of variational inequalities correspond to the value functions of two-player, zero-sum deterministic differential games with stopping time \cite{Evans1989pde}. 
In fact, following the control formulation for Hamilton-Jacobi equations stated in \cite{Imbert2017flux}, we conjecture that the following formulation should hold:
\begin{equation}\label{e.conAA}
  {\rho(s) = \inf_{z \in X\atop z(0)=s}\left[ \sup_{T \in [0,\infty]}\int_0^{T} e^{-s'} \ell(z(s'), z(s')-\dot{z}(s'))\,ds'\right],} 
\end{equation}
where $X=H^1([0,\infty);[0,\infty))$, $T \in [0,\infty]$ is any constant and the cost function $\ell$ is given by
$$\ell (s,\alpha):= \frac{\alpha^2}{4}- R (s) \hbox{ if } s \in \mathbb{R} \setminus \{c_i\}_{i=1}^n, \quad \ell (s,\alpha):= \frac{\alpha^2}{4}- \Lambda^{(i)} \hbox{ if } s =c_i. $$
By a change of variables $\tau = t(1-e^{-s'})$, one can show that $w(t,x) = t\rho(x/t)$ satisfies 
\begin{equation}\label{e.conBB}
{w(t,x) = \inf_{\gamma(0)=x\atop   \gamma(t)=0}
\left[\sup_{\theta \in [0,t]}  \int_0^{t\wedge \theta} 
\ell \left(\frac{\gamma(\tau)}{t-\tau}, -\dot\gamma(\tau) \right)
\,d\tau\right].}
\end{equation}
By applying the arguments in \cite[Lemma 2.4]{Freidlin1996wave}, one can check that the above is consistent with the known max-min formulas involving stopping times when the running cost $\ell$ is a continuous function \cite{Evans1989pde,Nadin2020}. When the minimum with $\rho$ is not taken in the problem \eqref{eq:3.27}, then the unique viscosity solution can be characterized via the optimal control formulation (with $T$ and $\theta$ taken to be $+\infty$ in \eqref{e.conAA} and \eqref{e.conBB} respectively); See \cite{Imbert2017flux,barles2023modern}.
}

\subsection{An earlier result: Viscosity solution in the sense of Ishii}\label{sec:ishii}

First, let $\tilde{H}$ be the truncated version of the Hamiltonian ${H}$, given by
	\begin{equation}\label{eq:ishii3}
		\tilde{H}(s,p) = -sp + p^2 + g(-\infty) \quad \text{ for }s < c_1, \quad \tilde{H}(s,p) = -sp + p^2 + g(+\infty) \quad \text{ for }s > c_1
	\end{equation}
 and $\tilde{H}(c_1,p) = -c_1p + p^2 + g(-\infty) \vee g(+\infty)$.
	Note that $\tilde{H}$ uses only the information $g(\pm\infty)$ but does not depend on the specific form of the profile of $g$.
	The following left and right limits of $H(s,p)$ at $s=c_1$, as functions of $p$, will be used later.
	\begin{equation}\label{eq:Hcpm}
		\tilde{H}(c_1-,p) = -c_1p + p^2 + g(-\infty) \quad \text{ and }\quad \tilde{H}(c_1+,p) = -c_1p + p^2 + g(+\infty).
	\end{equation} 
 In a previous paper \cite{Lam2022asymptotic}, we studied  equation \eqref{eq:1.1} in the case 
	\begin{equation}\label{ineq: upbound}
	\sup_{y\in\mathbb{R}}	g(y) \leq \max\{g(+\infty),g(-\infty)\},
	\end{equation}
which is connected to the unique $\hat \rho_{base}$ such that \eqref{eq:ishii2} holds and satisfies, in viscosity sense,  
	\begin{equation}\label{eq:ishii1}
		\min\{\rho, \rho + \tilde{H}(s,\rho')\} = 0 \quad \text{ for }s > 0.
	\end{equation}

	Since the Hamiltonian function $\tilde{H}$ is discontinuous at $c_1$, the viscosity solution needs to be interpreted in a relaxed sense introduced by Ishii \cite{Ishii1985hamilton}. As this definition is well known, we skip it here and refer the reader to  Definition \ref{def:1.2} in the appendix.

We will show in Section \ref{subsec:5.2} that $\rho$ is an Ishii solution to the equation \eqref{eq:ishii1} if and only if it is a FL-solution of \eqref{eq:fl} with $A = A_0:= \max\{\min\limits_{\mathbb{R}} H(c_1+,\cdot), \min\limits_{\mathbb{R}} H(c_1-,\cdot)\}$. See Proposition \ref{prop:equi}. 

    In \cite{Lam2022asymptotic}, we showed that the rate function of problem \eqref{eq:1.1} selects $\hat\rho_{base}$  provided that \eqref{ineq: upbound} holds. This means that the spreading speed $s_{base}$ is  as predicted by the 
    the equations \eqref{eq:ishii1}-\eqref{eq:ishii2} (with solution in the sense of Ishii).
	\begin{theorem} 
		For each $c_1 >0$, there exists a unique $\hat\rho_{base}$ which satisfies \eqref{eq:ishii1} in the sense of Ishii and the boundary conditions \eqref{eq:ishii2}. Furthermore, if \eqref{ineq: upbound} is valid, then
		$\bar{c}_* = \underline{c}_*= s_{base}$, where $s_{base} = \sup\{s\geq 0:~ \hat\rho_{base}(s)=0\}$.
		\label{thm:LY1}
	\end{theorem}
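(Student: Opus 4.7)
The plan is to decouple the statement into (i) well-posedness of $\hat\rho_{base}$ as an Ishii solution of \eqref{eq:ishii1}--\eqref{eq:ishii2}, and (ii) the identification $\overline{c}_* = \underline{c}_* = s_{base}$ under the hypothesis \eqref{ineq: upbound}.

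For part (i), I would first establish a comparison principle for \eqref{eq:ishii1} in Ishii's sense. The Hamiltonian $\tilde H(s,p)$ is convex and coercive in $p$, Lipschitz in $s$ away from $s=c_1$, and has explicit upper and lower envelopes at $s=c_1$ expressible in terms of $\tilde H(c_1\pm,\cdot)$; Ishii's framework is designed precisely for this setting. A standard doubling-of-variables argument, using the superlinear growth $\rho(s)/s\to +\infty$ to rule out pathologies at infinity and the pinned value $\rho(0)=0$ to pin the left boundary, yields $\underline\rho \le \overline\rho$. Existence then follows by Perron's method between the trivial subsolution $\rho\equiv 0$ and an explicit piecewise-quadratic supersolution built from the one-sided Hamiltonians $\tilde H(c_1\pm,\cdot)$.

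For part (ii), I would pass to the rate function $w^\ep(t,x):=-\ep\log u^\ep(t,x)$, which satisfies a viscous Hamilton--Jacobi equation with source term $g((x-c_1 t)/\ep) - u^\ep$. Locally uniform bounds on $w^\ep$, coming from Gaussian estimates above and the logistic damping below, allow the Barles--Perthame half-relaxed limits $\overline w$ and $\underline w$ to be defined; they are, respectively, a viscosity subsolution and a viscosity supersolution of the obstacle problem $\min\{w,\,w_t + \tilde H(x/t,w_x)\}=0$ in $\{t>0,\,x>0\}$, with boundary behavior inherited from the compactly supported initial datum. The self-similar ansatz $w(t,x)=t\rho(x/t)$ reduces this to \eqref{eq:ishii1}--\eqref{eq:ishii2}, and the comparison principle from (i) forces $\overline w = \underline w = t\hat\rho_{base}(x/t)$. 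Translating back through the exponential change of variables, $u^\ep \to 0$ locally uniformly on $\{\hat\rho_{base}(x/t)>0\}$ while $\liminf u^\ep > 0$ on $\{\hat\rho_{base}(x/t)=0,\,x>0\}$, which by definition of $s_{base}$ gives $\overline c_* = \underline c_* = s_{base}$.

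I expect the main obstacle to lie in identifying the correct limiting Hamiltonian along the singular line $x=c_1 t$: one needs to show that $\overline w$ is a subsolution with respect to the truncated $\tilde H$ built only from $g(\pm\infty)$, rather than picking up additional contributions from the interior profile of $g$. This is precisely where the hypothesis \eqref{ineq: upbound} intervenes, ensuring via Proposition \ref{prop:2.1} that $\Lambda_1 = \max\{g(\pm\infty)\}$ and that no nonlocal pulling from the interior of $g$ enhances the front. Equivalently, the flux limiter $A$ of the present paper collapses to $A_0$ and the FL and Ishii notions coincide, which is exactly why the bounded-envelope Hamiltonian $\tilde H$ is sufficient in this regime; the full argument is carried out in \cite{Lam2022asymptotic}.
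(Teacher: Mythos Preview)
Your proposal aligns with the paper's approach: this theorem is a recall from the prequel, and the paper's entire proof is the one-line citation ``See \cite[Proposition 1.7, Theorem 1 and Remark 1.6]{Lam2022asymptotic}.''  Your sketch of (i) comparison plus Perron for Ishii solutions and (ii) half-relaxed limits for the rate function is a reasonable outline of what that reference carries out, and you rightly defer the full argument to it.

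There is, however, one point in your last paragraph where the direction is reversed.  You locate the obstacle at showing the \emph{upper} limit $\overline w=w^*$ is an Ishii subsolution of the truncated problem at the junction.  In fact $w^*$ is \emph{always} an Ishii subsolution of \eqref{eq:ishii1}, regardless of the interior profile of $g$: in the present paper's language, $\rho^*$ is an FL-subsolution with limiter $A=\Lambda_1-c_1^2/4\ge A_0$, and since $F_A\ge F_{A_0}$ the FL-subsolution condition with $A$ is \emph{stronger} than with $A_0$, hence Ishii subsolution via Proposition~\ref{prop:equi}.  The genuinely delicate step is the \emph{supersolution} property of the lower limit $w_*$: being an FL-supersolution with limiter $A$ is \emph{weaker} than with $A_0$ when $A>A_0$, so one cannot conclude the Ishii supersolution property unless $A=A_0$, i.e., $\Lambda_1=\max\{g(\pm\infty)\}$.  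This is precisely where \eqref{ineq: upbound} enters (testing with $\phi\equiv 1$ in \eqref{eq:lambda1} gives $\Lambda_1\le\sup g\le\max\{g(\pm\infty)\}$, and Proposition~\ref{prop:2.1}(a) gives the reverse inequality).  So the potential enhancement from the interior profile of $g$ manifests on the supersolution side of the comparison, not the subsolution side.
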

	\begin{proof}
		See \cite[Proposition 1.7, Theorem 1 and Remark 1.6]{Lam2022asymptotic}. 
	\end{proof}

For the case when $g(y)$ does not satisfy \eqref{ineq: upbound}, we may compare the solution $u$ of \eqref{eq:1.1} with the solution $\tilde{u}$ of the same problem with $g$ replaced by the truncation $\min\{g, \max\{g(\pm \infty)\}\}$, 
	to deduce that the spreading speed is always bounded from below by $s_{base}$. 
{However, we will show in Corollary \ref{cor:main0} below that if $\Lambda_1>\max\{g(\pm\infty)\}$, then this lower bound is not optimal.}

	\begin{corollary}
		Suppose $\sup g > \max\{g(\pm \infty)\}$. For each $c_1>0$,
		$$
		\underline{c}_* \geq s_{base},
		$$
		where $\underline{c}_*$ is the minimal spreading speed and $s_{base}$ be given in Theorem \ref{thm:LY1}.
	\end{corollary}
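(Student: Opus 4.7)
The plan is to reduce the statement to Theorem \ref{thm:LY1} by a comparison argument. Since $\sup g$ may exceed $\max\{g(\pm\infty)\}$, I would first truncate $g$ from above to bring it into the regime where Ishii uniqueness applies, and then use that the truncated problem has the \emph{same} asymptotics $s_{base}$, because $s_{base}$ depends only on $g(\pm\infty)$.

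Concretely, I would define
\[
\tilde g(y) := \min\bigl\{g(y),\, \max\{g(-\infty),g(+\infty)\}\bigr\} \quad \text{for } y \in \mathbb{R},
\]
and let $\tilde u$ be the solution of \eqref{eq:1.1} with $g$ replaced by $\tilde g$ and the same initial datum $u_0$. Then $\tilde g$ is continuous, positive, and bounded with $\tilde g(\pm\infty) = g(\pm\infty)$ (so in particular $\tilde{H}$ in \eqref{eq:ishii3} is unchanged), and moreover $\sup_{\mathbb{R}} \tilde g \leq \max\{\tilde g(-\infty), \tilde g(+\infty)\}$, so the structural hypothesis \eqref{ineq: upbound} of Theorem \ref{thm:LY1} applies to $\tilde u$. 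This gives the existence of the spreading speed for $\tilde u$ equal to $\tilde s_{base}$, where $\tilde s_{base}$ is defined via the unique Ishii solution of \eqref{eq:ishii1}--\eqref{eq:ishii2}. Since this Ishii problem uses only $g(\pm\infty)$ through $\tilde H$, and $\tilde g(\pm\infty)=g(\pm\infty)$, we in fact have $\tilde s_{base}=s_{base}$.

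Next, I would invoke the parabolic comparison principle to deduce $\tilde u \leq u$ in $[0,\infty)\times\mathbb{R}$. Since $\tilde g \leq g$ pointwise and both $u, \tilde u \geq 0$, the function $\tilde u$ satisfies
\[
\tilde u_t - \tilde u_{xx} = \tilde u\bigl(\tilde g(x - c_1 t) - \tilde u\bigr) \leq \tilde u\bigl(g(x-c_1 t) - \tilde u\bigr),
\]
so $\tilde u$ is a subsolution of \eqref{eq:1.1}. Standard comparison for the KPP logistic reaction (which is quasi-monotone in $u$ for $u\geq 0$, after a Lipschitz rescaling of the right-hand side) together with the common initial datum yields $0 \leq \tilde u(t,x) \leq u(t,x)$ for all $t \geq 0$, $x \in \mathbb{R}$.

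Finally, for any $c \in (0, s_{base})$ the conclusion of Theorem \ref{thm:LY1} applied to $\tilde u$ gives
\[
\liminf_{t\to\infty} \inf_{0<x<ct} u(t,x) \;\geq\; \liminf_{t\to\infty} \inf_{0<x<ct} \tilde u(t,x) \;>\; 0,
\]
which, by the definition \eqref{eq:speeds} of $\underline c_*$, yields $\underline c_* \geq c$. Letting $c \nearrow s_{base}$ finishes the proof. The only nontrivial point is verifying the comparison $\tilde u \leq u$; this is routine once one notes that $g$ is bounded on $\mathbb{R}$ so the reaction term is globally Lipschitz in $u$ on bounded sets, and that both solutions remain uniformly bounded (by the maximum of $\max_y g(y)$ and $\|u_0\|_\infty$) for all $t \geq 0$, so the standard parabolic comparison theorem on $\mathbb{R}$ applies without difficulty.
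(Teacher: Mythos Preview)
Your proof is correct and follows exactly the approach the paper indicates: truncate $g$ to $\tilde g = \min\{g,\max\{g(\pm\infty)\}\}$, apply Theorem \ref{thm:LY1} to the truncated problem (which has the same $s_{base}$ since $\tilde g(\pm\infty)=g(\pm\infty)$), and use the parabolic comparison $\tilde u \leq u$ to conclude. The paper states this argument in a single sentence just before the corollary rather than as a formal proof, but your detailed verification of each step is accurate.
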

	
	\begin{remark}\label{rmk:1.5}
In case \eqref{ineq: upbound} holds, the spreading speed can then be determined as soon as explicit solution $\hat\rho_{base}$ of \eqref{eq:ishii1}-\eqref{eq:ishii2} (in the sense of Ishii) can be constructed. This gives an alternative verification of the formula  \eqref{eq:cstarold} (in case $g(-\infty) \geq g(+\infty)$) and formula \eqref{eq:cstarold2} (in case $g(+\infty)\geq  g(-\infty)$) based on the viscosity solution in sense of Ishii \cite{Lam2022asymptotic}. 
%
	\end{remark}

	\section{Applications and Explicit Formulas}
	\label{sec:applications}
	As applications, we apply Theorem \ref{thm:2.13} to treat \eqref{eq:1.1}, which concerns the case when there is a single environmental shifting speed $c_1$. We will derive explicit formulas for the spreading speed in terms of $c_1$, $g(+\infty), g(-\infty)$ and $\Lambda_1$, where $\Lambda_1$ is the principal eigenvalue of \eqref{eq:pev0}  defined by \eqref{eq:lambda1}.  To simplify the notations, we denote for the remainder of this section
	$$
	r_- =g(-\infty),\quad \text{ and }\quad r_+ = g(+\infty).
	$$ 
  Thanks to standard properties of the principal eigenvalue $\Lambda_1$ (see Proposition \ref{prop:2.1}(a)) we have 
	$$\Lambda_1\in [\max\{r_-, r_+\},\infty).$$

\begin{remark}
			For $g(y)=r_-\chi_{\{y<0\}}+r_m\chi_{\{0\le y<L \}}+r_+\chi_{\{y\ge L\}}$ with $r_m>\max\{r_-, r_+\}$  that there exists  $\underline L\geq 0$   
   such that 
   $$
   \Lambda_1>\max\{r_-, r_+\} \quad \text{ if and only if } \quad L>\underline{L}$$ 
   where $\underline L = 0$ when $r_-=r_+$ and 
   \begin{equation*}
    \underline{L} = \frac{1}{\sqrt{r_m - \max\{r_-,r_+\}}} {\rm arccot} \left( \sqrt{\frac{r_m - \max\{r_-,r_+\}}{|r_- -r_+|}} \right)>0 \quad \text{ when }r_-\neq r_+.
\end{equation*}
See \cite[Theorem 2.1]{girardin2024spreading} for the precise statement.
	\end{remark}

	The following theorem says that the spreading speed $c_*$ is enhanced according to the specific profile of $g(\cdot)$. 
	\begin{theorem}\label{thm:main0}
		
  Let $u$ be a solution of \eqref{eq:1.1} with compactly supported, nonnegative, nontrivial initial data,  then the rightward spreading speed
		$$
		c_*=\overline{c}_* =\underline{c}_*=\begin{cases}
			2\sqrt{r_+} &\text{ if } c_1 \le 2\sqrt{r_+},\\
			c_1 &\text{ if } 2\sqrt{r_+} < c_1 \le 2\sqrt{\Lambda_1},\\
			\frac{c_1}{2} - \sqrt{\Lambda_1 - r_-} + \frac{{r_-}}{\frac{c_1}{2} - \sqrt{\Lambda_1 - r_-}} &\text{ if } 2\sqrt{\Lambda_1} < c_1 \le 2(\sqrt{r_-} + \sqrt{\Lambda_1 -r_-}),\\
			2\sqrt{r_-}  &\text{ if }  c_1 > 2(\sqrt{r_-} + \sqrt{\Lambda_1 -r_-}).
		\end{cases}
		$$
In particular, the mapping $c_1 \mapsto c_*$ is in general non-monotone, see  panels (b), (d) and (f) of Figure \ref{fig1} for illustration.
	\end{theorem}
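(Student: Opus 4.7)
The plan is to apply Theorem \ref{thm:2.13}(a), which identifies $c_* = \hat{s}_A = \sup\{s : \hat\rho_A(s) = 0\}$, where $\hat\rho_A$ is the unique FL-solution of \eqref{eq:fl}-\eqref{eq:ishii2} with flux limiter $A = \Lambda_1 - c_1^2/4$. By the uniqueness in Corollary \ref{prop:2.12}, it suffices in each of the four regimes to exhibit an explicit continuous candidate $\hat\rho_A$, to verify that it is both a FL-subsolution and a FL-supersolution and satisfies the boundary conditions, and then to read off $c_* = \hat{s}_A$ from the construction.

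The building blocks are: the trivial supersolution $\rho \equiv 0$ (valid wherever $R(s) > 0$); constant-slope solutions $\rho(s) = p(s - s_0)$ of $\rho + H(s,\rho') = 0$ with $p$ a root of $p^2 - s_0 p + r_\pm = 0$; and the KPP envelopes $\rho(s) = s^2/4 - r_\pm$, to which each such line through $(s_0,0)$ is tangent at $s = 2p$. I will glue these pieces across the free boundary $c_*$ and across the junction $c_1$. For Case 1, take $\hat\rho_A \equiv 0$ on $[0, 2\sqrt{r_+}]$ and $\hat\rho_A(s) = s^2/4 - r_+$ beyond: since $c_1 \le 2\sqrt{r_+}$, the junction lies inside $\{\hat\rho_A = 0\}$ and is handled by $F_A(0,0) \ge r_+ > 0$. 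For Case 2, take $\hat\rho_A \equiv 0$ on $[0, c_1]$, then a linear piece of slope $(c_1 + \sqrt{c_1^2 - 4r_+})/2$ on $(c_1, c_1 + \sqrt{c_1^2 - 4r_+})$, followed by the $r_+$-envelope: the junction supersolution collapses to $A \ge 0$, equivalent to $c_1 \le 2\sqrt{\Lambda_1}$. For Case 3, setting $\beta = c_1/2 - \sqrt{\Lambda_1 - r_-}$ and $c_* = \beta + r_-/\beta$, take $\hat\rho_A \equiv 0$ on $[0, c_*]$, a linear piece of slope $\beta$ on $[c_*, c_1]$, a linear piece of slope $c_1/2 + \sqrt{\Lambda_1 - r_+}$ on $(c_1, c_1 + 2\sqrt{\Lambda_1 - r_+})$, and the $r_+$-envelope beyond: an algebraic identity yields $\hat\rho_A(c_1) = c_1^2/4 - \Lambda_1 = -A$, so the junction relation becomes the equality $-A + A = 0$. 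For Case 4 with $c_* = 2\sqrt{r_-}$, replace the $\beta$-linear piece of Case 3 by the $r_-$-envelope on $[2\sqrt{r_-}, c_1 - 2\sqrt{\Lambda_1 - r_-}]$ followed by a linear piece of slope $\beta$ on $[c_1 - 2\sqrt{\Lambda_1 - r_-}, c_1]$, which is tangent to the envelope at the common endpoint; the structure on $(c_1, \infty)$ is unchanged, and one again computes $\hat\rho_A(c_1) = -A$.

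The main obstacle will be the verification of the FL conditions at the free boundary $c_*$ and the junction $c_1$ in Cases 3 and 4. At the concave corner $c_*$ (with left slope zero and right slope positive), the superdifferential $D^+\hat\rho_A(c_*)$ is empty while the subdifferential is the interval $[0, \hat\rho_A'(c_*+)]$, so the supersolution inequality with $H^{r_-}(c_*, p) = (p - p_-)(p - p_+)$, where $p_\pm$ denote the roots of $p^2 - c_* p + r_- = 0$, forces $\hat\rho_A'(c_*+) \le p_-$; the interior viscosity equation on $(c_*, c_1)$ then pins down equality, and one checks that $p_- = \beta$ in Case 3 (using $c_* = \beta + r_-/\beta$) and $p_- = \sqrt{r_-}$ in Case 4 (the two roots coincide). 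At $c_1$, the left and right slopes land on opposite sides of $c_1/2$, so the definitions of $H^\pm$ combined with $A \ge \max(r_\pm) - c_1^2/4$ (a consequence of $\Lambda_1 \ge \max(r_\pm)$) collapse $F_A$ to $A$; together with $\hat\rho_A(c_1) = -A$, this produces the FL-equality. The case thresholds $c_1 = 2\sqrt{\Lambda_1}$ and $c_1 = 2(\sqrt{r_-} + \sqrt{\Lambda_1 - r_-})$ are recovered as the algebraic degeneracies where, respectively, the middle linear piece collapses so $c_* = c_1$ (Case 2/3 boundary) and the tangent point $2\beta$ equals $2\sqrt{r_-}$ so the $r_-$-envelope piece in Case 4 first emerges (Case 3/4 boundary).
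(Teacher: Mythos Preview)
Your proposal is correct and follows essentially the same approach as the paper's own proof: in each of the four regimes you construct the identical piecewise candidate (zero / linear / KPP-envelope pieces with the same slopes $\mu_\pm = c_1/2 \pm \sqrt{\Lambda_1 - r_\pm}$), verify the FL-sub/supersolution conditions at the free boundary $c_*$ and at the junction $c_1$, and invoke uniqueness from Corollary~\ref{prop:2.12} to conclude via Theorem~\ref{thm:2.13}(a). Your discussion of the sub/superdifferential structure at $c_*$ and the collapse $F_A(\mu_+,\mu_-)=A$ at $c_1$ (using $\Lambda_1\ge\max\{r_\pm\}$) mirrors the paper's verification exactly.
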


\begin{figure}[htbp]
 \subfigure[$\Lambda_1=r_+>r_->0$]{
	\includegraphics[width=0.5\textwidth]{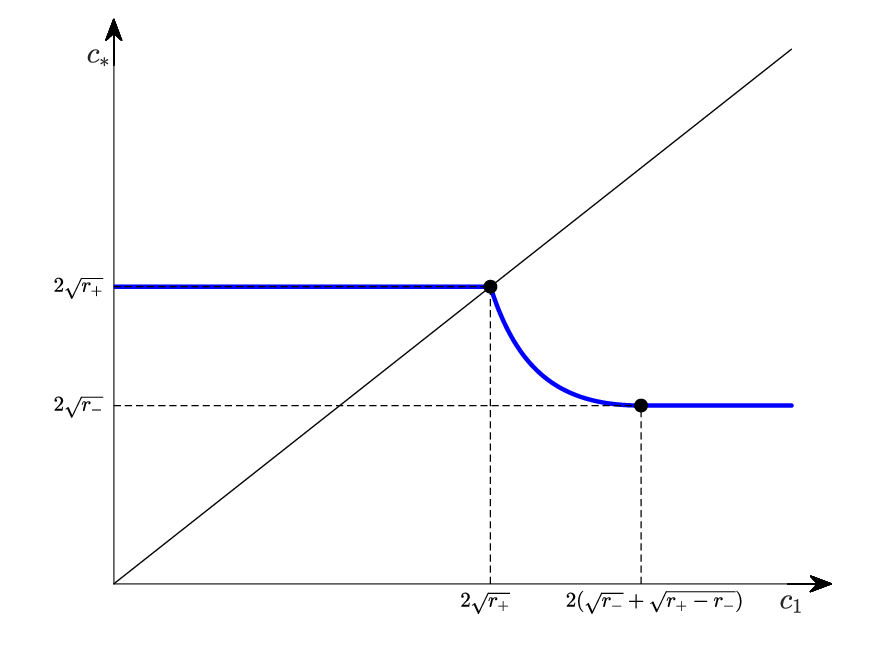}
	}
	\subfigure[$\Lambda_1>r_+>r_->0$]{
	\includegraphics[width=0.5\textwidth]{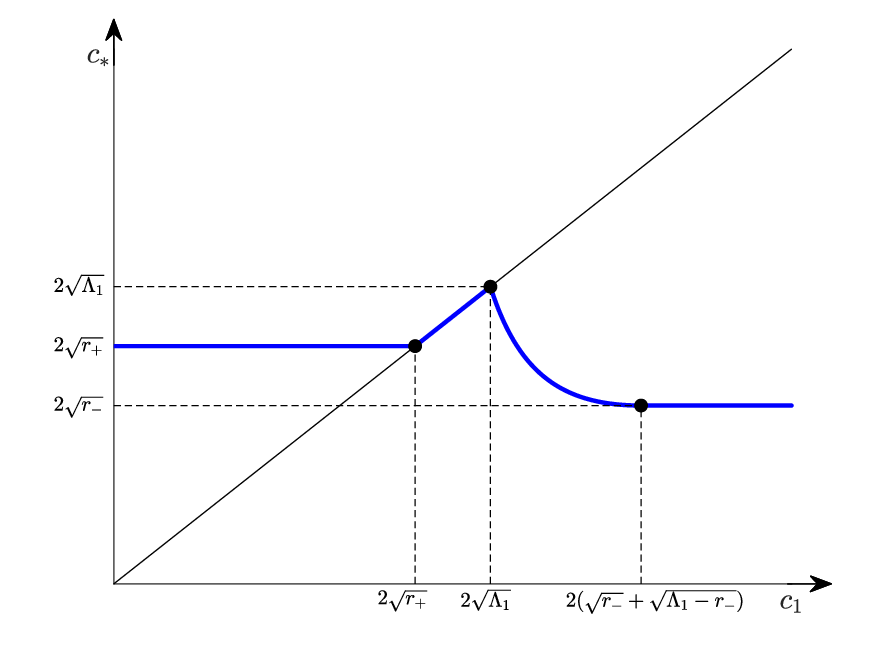}}
\subfigure[$\Lambda_1=r_->r_+>0$]{
		\includegraphics[width=0.5\textwidth]{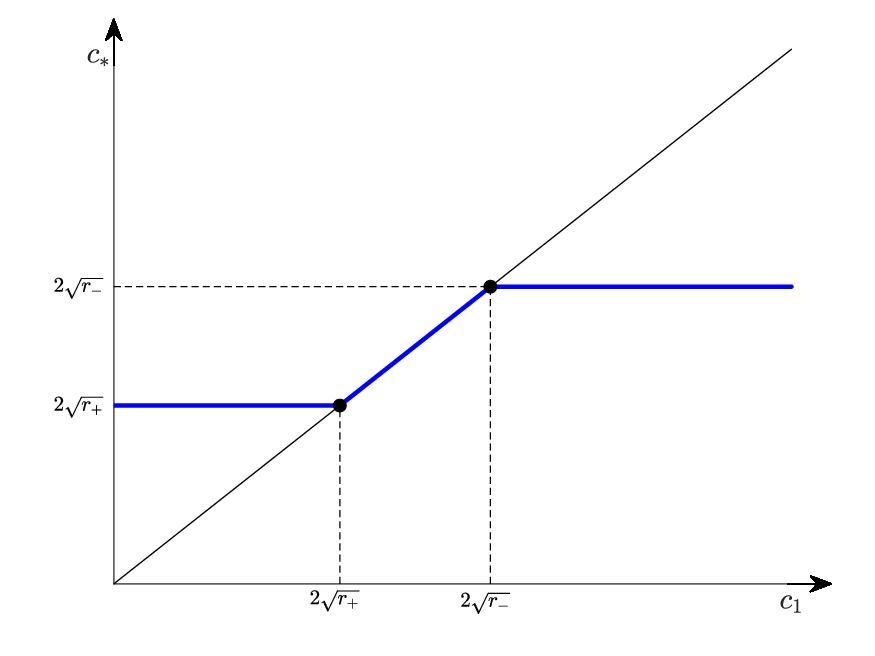}
	}
	\subfigure[$\Lambda_1>r_->r_+>0$]{
		\includegraphics[width=0.5\textwidth]{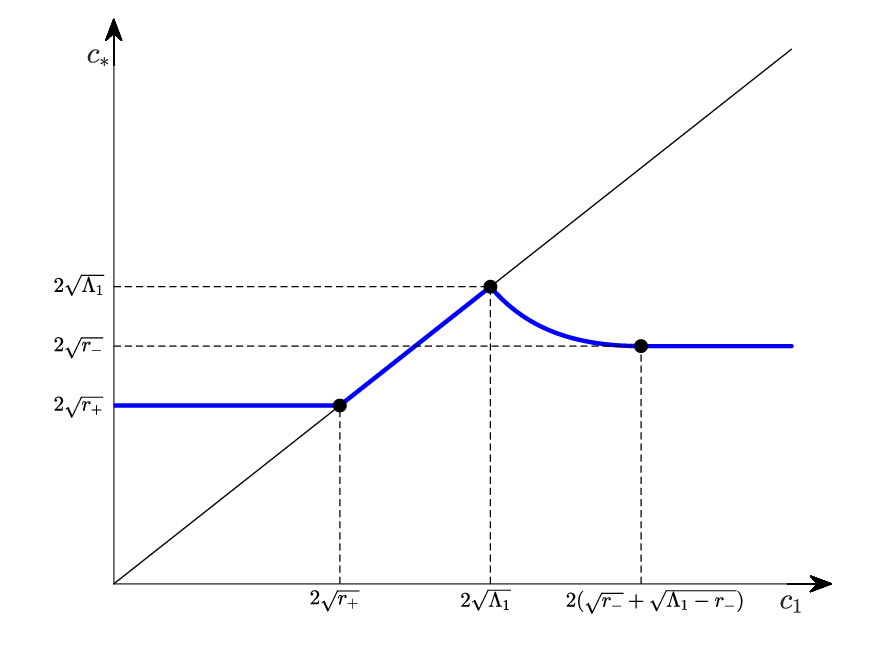}}
\subfigure[$\Lambda_1=r_-=r_+>0$]{
		\includegraphics[width=0.5\textwidth]{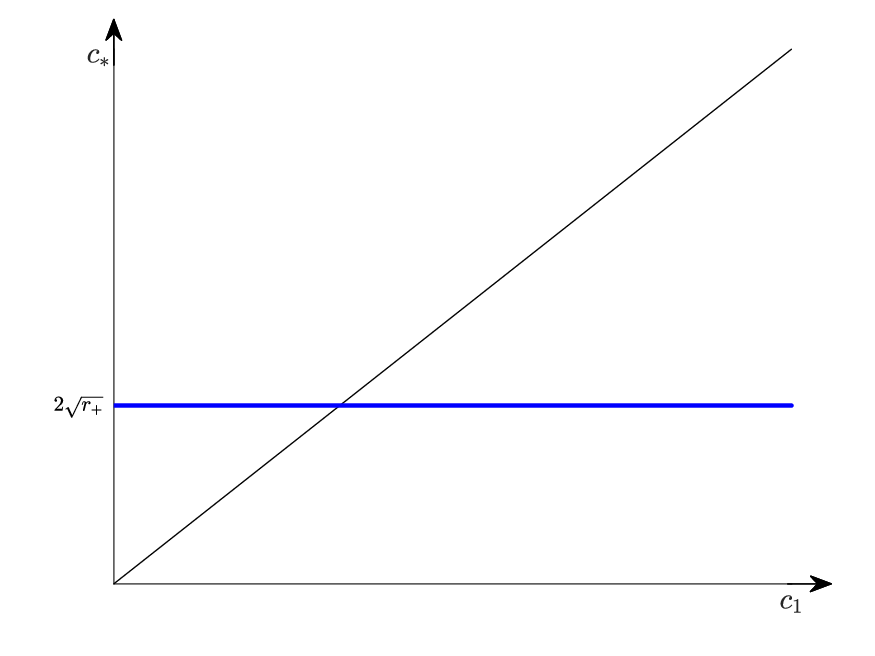}
	}
	\subfigure[$\Lambda_1>r_-=r_+>0$]{
		\includegraphics[width=0.5\textwidth]{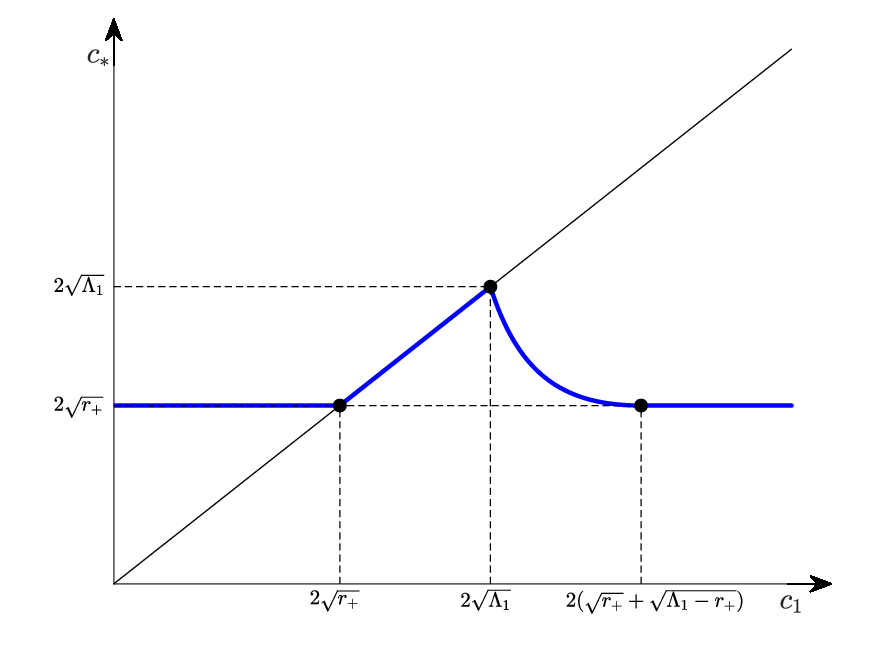}}
 \caption{The dependence of spreading speed $c_*$ on $c_1$. Here $r_\pm = g(\pm \infty)$. The case $c_*= c_1$ is also indicated in \cite{Berestycki2018forced}. Nonlocal pulling is illustrated by the curved part of the blue lines in panels (a), (b), (d) and (f). The part where $c_*$ coincides either with the KPP speed of the limiting system at $\pm\infty$ is indicated by the horizontal part of the blue lines in (a)-(f).}
   \label{fig1}
 \end{figure}

	\begin{remark}
		The case $c_1 \in (2\sqrt{r_+}, 2\sqrt{\Lambda_1})$ is contained in \cite{Berestycki2018forced}. In fact, under this assumption, they proved the existence of a family of forced waves. Moreover, it is proved that solutions with sufficiently fast decaying initial data (including those that are compactly supported) converge locally uniformly to the unique minimal forced wave.
	\end{remark}
		\begin{remark}
			In case that $\Lambda_1=r_->r_+$, then case (iii) in Theorem \ref{thm:main0} is eliminated. 	In case that $\Lambda_1=r_+>r_-$, then case (ii) in Theorem \ref{thm:main0} is eliminated. 	In case that $\Lambda_1=r_-=r_+$, then cases (ii) and (iii) in Theorem \ref{thm:main0} are eliminated. In our former paper \cite{Lam2022asymptotic}, we showed that $c_* = s_{base}$ when $\sup g \le \max\{r_\pm\}$ with three cases:  $r_+>r_-$, $r_+<r_-$  and $r_+=r_-$. Next corollaries extend the validity of $c_* = s_{base}$ to all $g$ such that $\Lambda_1 = \max\{r_\pm\}$. See Figure \ref{fig1}.
	\end{remark}

	\begin{corollary}\label{cor:3.2}
		Suppose $\Lambda_1=r_+ > r_-$ and let $u$ be a solution of \eqref{eq:1.1} with compactly supported, nonnegative, nontrivial initial data,  then
		\begin{equation}\label{eq:cstarold}
		c_*=\overline{c}_* =\underline{c}_* = \begin{cases}
				2\sqrt{r_+} &\text{ if } c_1 \le 2 \sqrt{r_+},\\
				\frac{c_1}{2} - \sqrt{r_+ - r_-} + \frac{r_-}{\frac{c_1}{2} - \sqrt{r_+-r_-}} &\text{ if } 2\sqrt{r_+} < c_1 \le 2(\sqrt{r_-} + \sqrt{r_+ - r_-})\\
				2\sqrt{r_-}  &\text{ if }  c_1 > 2(\sqrt{r_-} + \sqrt{r_+ - r_-})
			\end{cases}
		\end{equation}
  See Figure \ref{fig1}(a) for the dependence of $c_*$ on the shifting speed $c_1$.
	\end{corollary}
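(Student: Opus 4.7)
The plan is to derive Corollary \ref{cor:3.2} as an immediate specialization of Theorem \ref{thm:main0} under the hypothesis $\Lambda_1 = r_+ > r_-$. Since Theorem \ref{thm:main0} already gives the rightward spreading speed for arbitrary $g$ with $\inf g > 0$ and $g(\pm\infty)$ existing (with explicit dependence on $c_1$, $r_\pm$ and $\Lambda_1$), the work reduces to substituting $\Lambda_1 = r_+$ and simplifying.

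First I would examine what happens to the four branches of the piecewise formula. The defining interval of case (ii), namely $2\sqrt{r_+} < c_1 \le 2\sqrt{\Lambda_1}$, collapses to the empty set as soon as $\Lambda_1 = r_+$. Thus case (ii) contributes nothing, and the formula in Theorem \ref{thm:main0} reduces to three branches, exactly matching \eqref{eq:cstarold}. Next I would plug $\Lambda_1 = r_+$ into each remaining branch: case (i) yields $c_* = 2\sqrt{r_+}$ for $c_1 \le 2\sqrt{r_+}$ verbatim; case (iii) yields $c_* = \tfrac{c_1}{2} - \sqrt{r_+ - r_-} + \tfrac{r_-}{c_1/2 - \sqrt{r_+ - r_-}}$ on the interval $2\sqrt{r_+} < c_1 \le 2(\sqrt{r_-} + \sqrt{r_+ - r_-})$, noting that the denominator is strictly positive on that interval (indeed $c_1/2 > \sqrt{r_+} \ge \sqrt{r_+ - r_-}$); and case (iv) yields $c_* = 2\sqrt{r_-}$ for $c_1 > 2(\sqrt{r_-} + \sqrt{r_+ - r_-})$.

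Finally, I would remark for consistency that the three branches agree at the transition values $c_1 = 2\sqrt{r_+}$ and $c_1 = 2(\sqrt{r_-}+\sqrt{r_+ - r_-})$, so that $c_1 \mapsto c_*$ is continuous. The first equality is immediate. At the second transition, setting $\alpha = \sqrt{r_+ - r_-}$ and $c_1/2 = \sqrt{r_-}+\alpha$ reduces the middle expression to $\sqrt{r_-} + r_-/\sqrt{r_-} = 2\sqrt{r_-}$, matching the outer branch.

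There is no genuine obstacle here: the corollary is a bookkeeping step once Theorem \ref{thm:main0} is granted. The only point worth flagging is that this specialization strictly enlarges the class of $g$'s for which \eqref{eq:cstarold} was previously known from \cite{Lam2022asymptotic}, where the result required $\sup g \le \max\{r_\pm\}$ (which forces $\Lambda_1 = r_+$), whereas here one only assumes $\Lambda_1 = r_+$, allowing $\sup g$ to exceed $\max\{r_\pm\}$ as long as the localized enhancement is too weak to lift $\Lambda_1$ above $r_+$.
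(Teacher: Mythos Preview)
Your argument is correct. You specialize Theorem~\ref{thm:main0} directly: with $\Lambda_1=r_+$ the interval in case~(ii) becomes empty, and substituting $\Lambda_1=r_+$ into cases~(i), (iii), (iv) recovers \eqref{eq:cstarold} verbatim. This is a valid and self-contained derivation.

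The paper takes a different (though equally short) route: it invokes Theorem~\ref{thm:2.13}(b), which says that under the hypothesis $\Lambda_1=\max\{g(\pm\infty)\}$ the spreading speed equals $s_{base}$ determined by the Ishii solution $\hat\rho_{base}$, and then cites the explicit formula for $s_{base}$ already established in \cite[Theorems~6(iv) and~7]{Lam2022asymptotic}. In other words, the paper passes through the equivalence with the Ishii framework and appeals to the earlier work, whereas you stay entirely within the present paper and specialize the general flux-limited formula. Your route is more self-contained; the paper's route makes explicit the consistency with the previously known $s_{base}$ formulas (which is the point the authors want to stress in Remark~\ref{rmk:1.5} and the surrounding discussion). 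Both are one-line reductions once the relevant theorems are in hand.
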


		\begin{corollary}\label{cor:3.3}
		Suppose $\Lambda_1=r_->r_+$ and 
$u$ be a solution of \eqref{eq:1.1} with compactly supported, nonnegative, nontrivial initial data,  then
		\begin{equation}\label{eq:cstarold2}
			c_*=\overline{c}_* =\underline{c}_* = \begin{cases}
				2\sqrt{r_+} &\text{ if } c_1 \le 2 \sqrt{r_+},\\
				c_1 &\text{ if } 2\sqrt{r_+} < c_1 \le 2\sqrt{r_-},\\
				2\sqrt{r_-}  &\text{ if }  c_1 > 2 \sqrt{r_-}.
			\end{cases}
		\end{equation}
    See Figure \ref{fig1}(c) for the dependence of $c_*$ on the shifting speed $c_1$.
	\end{corollary}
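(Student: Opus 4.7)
The plan is to derive Corollary \ref{cor:3.3} as a direct specialization of Theorem \ref{thm:main0}, substituting the hypothesis $\Lambda_1 = r_-$ (with $r_- > r_+$) into the four-case formula and simplifying. Since Theorem \ref{thm:main0} already produces the spreading speed $c_*$ entirely in terms of $c_1, r_\pm$ and $\Lambda_1$, no additional analysis of the flux-limited solution $\hat\rho_A$ is required; the work reduces to a routine case-by-case algebraic reduction.

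The key observation is that with $\Lambda_1 = r_-$ one has $\sqrt{\Lambda_1 - r_-} = 0$, which produces the following simplifications. First, the threshold $2\sqrt{\Lambda_1}$ separating cases (ii) and (iii) of Theorem \ref{thm:main0} becomes $2\sqrt{r_-}$. Second, the upper threshold $2(\sqrt{r_-} + \sqrt{\Lambda_1 - r_-})$ in case (iii) also collapses to $2\sqrt{r_-}$, so that the interval $\bigl(2\sqrt{\Lambda_1},\, 2(\sqrt{r_-} + \sqrt{\Lambda_1 - r_-})\bigr] = (2\sqrt{r_-},\, 2\sqrt{r_-}]$ is empty and case (iii) is vacuous. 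Third, the non-trivial expression
\[
\frac{c_1}{2} - \sqrt{\Lambda_1 - r_-} + \frac{r_-}{\frac{c_1}{2} - \sqrt{\Lambda_1 - r_-}}
\]
never appears, so we avoid any issue about its well-posedness (in particular the sign of the denominator, which was a delicate point in the general statement). After these simplifications, the surviving cases are precisely cases (i), (ii), and (iv) of Theorem \ref{thm:main0}, yielding exactly the three-case formula \eqref{eq:cstarold2}.

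The only tiny consistency check is that the hypothesis $r_- > r_+$ ensures $2\sqrt{r_+} < 2\sqrt{r_-}$, so that the middle interval $2\sqrt{r_+} < c_1 \le 2\sqrt{r_-}$ in case (ii) is a genuine non-empty range. No other obstacle arises: the result is essentially a degenerate limit of Theorem \ref{thm:main0}, and the proof can be written in just a few lines invoking that theorem. The only part worth spelling out carefully is why case (iii) of Theorem \ref{thm:main0} is empty (rather than, say, being subsumed into case (iv) with a different value), which is immediate from the coincidence of the two thresholds mentioned above.
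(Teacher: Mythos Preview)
Your argument is correct: substituting $\Lambda_1 = r_-$ into Theorem~\ref{thm:main0} collapses case~(iii) (the interval $(2\sqrt{\Lambda_1},\,2(\sqrt{r_-}+\sqrt{\Lambda_1-r_-})]$ becomes $(2\sqrt{r_-},\,2\sqrt{r_-}]=\emptyset$) and the remaining three cases give exactly~\eqref{eq:cstarold2}. The paper itself signals this route in the remark preceding the corollaries (``In case that $\Lambda_1=r_->r_+$, then case~(iii) in Theorem~\ref{thm:main0} is eliminated'').

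However, the paper's \emph{formal} proof takes a different path: it invokes Theorem~\ref{thm:2.13}(b), which says that when $\Lambda_1=\max\{g(\pm\infty)\}$ the spreading speed is $\hat s_{base}$ determined by the Ishii solution of~\eqref{eq:ishii1}--\eqref{eq:ishii2}, and then cites the explicit formulas already computed in the predecessor paper \cite[Theorem~6(iv) and Theorem~7]{Lam2022asymptotic}. Your route is more self-contained within the present paper, since Theorem~\ref{thm:main0} is proved here via the flux-limited solution $\hat\rho_A$; the paper's route instead leans on the earlier Ishii-based analysis. Both are one-line reductions; the only practical difference is which prior result carries the weight.
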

		\begin{corollary}\label{cor:3.4}
	Suppose $\Lambda_1=r_-=r_+$ and $u$ be a solution of \eqref{eq:1.1} with compactly supported, nonnegative, nontrivial initial data,  then
	\begin{equation}\label{eq:cstarold3}
		c_*=\overline{c}_* =\underline{c}_* = 2\sqrt{\Lambda_1}.
	\end{equation}
 See Figure \ref{fig1}(e) for the dependence of $c_*$ on the shifting speed $c_1$.
\end{corollary}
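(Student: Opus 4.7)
The plan is to derive Corollary \ref{cor:3.4} as a direct specialization of Theorem \ref{thm:main0} under the hypothesis $\Lambda_1 = r_- = r_+$. Since Theorem \ref{thm:main0} already establishes the spreading speed $c_* = \overline{c}_* = \underline{c}_*$ for arbitrary $c_1 > 0$ via a piecewise formula in $c_1$, the task is simply to show that under the given degeneracy all four pieces collapse to the single value $2\sqrt{\Lambda_1}$.

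First I would examine the thresholds defining the four regimes. Under $\Lambda_1 = r_- = r_+$, we have $\sqrt{\Lambda_1 - r_-} = 0$, and hence
\[
2\sqrt{r_+} \;=\; 2\sqrt{\Lambda_1} \;=\; 2\sqrt{r_-} \;=\; 2\bigl(\sqrt{r_-}+\sqrt{\Lambda_1-r_-}\bigr).
\]
Consequently the intervals $(2\sqrt{r_+},\,2\sqrt{\Lambda_1}]$ and $(2\sqrt{\Lambda_1},\,2(\sqrt{r_-}+\sqrt{\Lambda_1-r_-})]$, which correspond to the second and third cases in Theorem \ref{thm:main0}, are both empty. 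So only the first and fourth cases need to be checked.

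For $c_1 \leq 2\sqrt{\Lambda_1}$ (the first case), Theorem \ref{thm:main0} gives $c_* = 2\sqrt{r_+} = 2\sqrt{\Lambda_1}$; for $c_1 > 2\sqrt{\Lambda_1}$ (the fourth case), it gives $c_* = 2\sqrt{r_-} = 2\sqrt{\Lambda_1}$. Combining these two subcases yields \eqref{eq:cstarold3} uniformly in $c_1 > 0$, completing the proof.

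There is no real obstacle here: since Theorem \ref{thm:main0} has already been stated and the only work is substitution into the formula, the proof is purely a matter of bookkeeping. The one minor subtlety to mention, if desired for completeness, is that the apparent singularity in the third-case formula $\frac{c_1}{2} - \sqrt{\Lambda_1-r_-} + \frac{r_-}{c_1/2 - \sqrt{\Lambda_1-r_-}}$ at $\sqrt{\Lambda_1 - r_-} = 0$ is harmless precisely because the interval on which that formula applies is empty.
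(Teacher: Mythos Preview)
Your argument is correct, but it takes a different route from the paper's own proof. The paper does not specialize the explicit formula of Theorem \ref{thm:main0}; instead it invokes Theorem \ref{thm:2.13}(b) directly, noting that the hypothesis $\Lambda_1 = \max\{g(\pm\infty)\}$ forces $c_* = \hat s_{base}$, where $\hat\rho_{base}$ is the unique Ishii solution of \eqref{eq:ishii1}--\eqref{eq:ishii2}, and then cites \cite[Theorem 6(iv) and Theorem 7]{Lam2022asymptotic} for the explicit value of $\hat s_{base}$. Your approach has the advantage of being entirely self-contained within this paper: once Theorem \ref{thm:main0} is proved (by constructing the explicit FL-solutions $\rho_1,\dots,\rho_4$), the corollary follows by pure substitution, without appeal to the earlier work. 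The paper's route, on the other hand, emphasizes the conceptual point that under $\Lambda_1 = \max\{g(\pm\infty)\}$ the flux-limited framework collapses back to the Ishii framework of \cite{Lam2022asymptotic}, which is the content of Theorem \ref{thm:2.13}(b) and Proposition \ref{prop:equi}. Either derivation is valid; yours is arguably the more economical once Theorem \ref{thm:main0} is in hand.
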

\begin{remark}
If $c_1\le 0$, then $c_*=2\sqrt{r_+}$ (as $R(s)\equiv r_+$ for $s>0$), see also \cite{HSL2019}.
\end{remark}
Finally, it is easy to obtain the following result on the leftward spreading speed by considering the rightward spreading speed of $v(t,x):=u(t,-x)$ and apply Theorem \ref{thm:main0} and Corollaries \ref{cor:3.2}--\ref{cor:3.4}.
\begin{theorem}\label{thm:main1}
		 Let $u$ be a solution of \eqref{eq:1.1} with compactly supported, nonnegative, nontrivial initial data,  then the leftward spreading speed
		$$
		c^*=\overline{c}^* =\underline{c}^*=\begin{cases}
			2\sqrt{r_-} &\text{ if } c_1 \ge -2\sqrt{r_-},\\
			c_1 &\text{ if } -2\sqrt{r_-} >c_1 \ge -2\sqrt{\Lambda_1},\\
			\frac{c_1}{2} - \sqrt{\Lambda_1 - r_+} + \frac{r_+}{\frac{c_1}{2} - \sqrt{\Lambda_1 - r_+}} &\text{ if } -2\sqrt{\Lambda_1} >c_1 \ge -2(\sqrt{r_+} + \sqrt{\Lambda_1 -r_+}),\\
			2\sqrt{r_+}  &\text{ if }  c_1 <- 2(\sqrt{r_+} + \sqrt{\Lambda_1 -r_+}).
		\end{cases}
		$$
		where
			\begin{equation}\label{eq:speeds~}
			\begin{cases}
				\smallskip
				\overline{c}^*=\inf{\{c>0~|~\limsup \limits_{t\rightarrow \infty}\sup\limits_{x<-ct} u(t,x)=0\}},\\
				\smallskip
				\underline{c}^*=\sup{\{c>0~~|\liminf \limits_{t\rightarrow \infty}\inf\limits_{-ct<x<-ct+1} u(t,x)>0\}}.
			\end{cases}
		\end{equation}
	\end{theorem}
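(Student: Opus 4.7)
The plan is to reduce Theorem \ref{thm:main1} to Theorem \ref{thm:main0} via the spatial reflection $v(t,x) := u(t,-x)$, as the paragraph preceding the statement indicates. A direct calculation shows that $v$ satisfies
\begin{equation*}
v_t - v_{xx} = v\bigl(\tilde g(x - \tilde c_1 t) - v\bigr), \qquad v(0,x) = u_0(-x),
\end{equation*}
with $\tilde g(y) := g(-y)$ and $\tilde c_1 := -c_1$; since $u_0(-\cdot)$ is again compactly supported, nonnegative and nontrivial, $v$ is an admissible solution of the reflected problem. The transformed limits are $\tilde r_\pm := \tilde g(\pm\infty) = g(\mp\infty) = r_\mp$, and the principal eigenvalue is invariant under reflection: the substitution $\phi(y) \mapsto \phi(-y)$ carries admissible test functions in the characterization \eqref{eq:lambda1} of $\Lambda_1(g)$ bijectively onto those for $\Lambda_1(\tilde g)$, so $\tilde\Lambda_1 = \Lambda_1$.

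I would then apply Theorem \ref{thm:main0} to $v$ with the transformed quadruple $(\tilde c_1, \tilde r_+, \tilde r_-, \tilde\Lambda_1) = (-c_1, r_-, r_+, \Lambda_1)$, yielding a rightward spreading speed $\tilde c_*$ for $v$. Each of the four threshold conditions and each of the four explicit formulas in Theorem \ref{thm:main0} transforms under these substitutions into the corresponding case of Theorem \ref{thm:main1}; for instance $\tilde c_1 \leq 2\sqrt{\tilde r_+}$ becomes $c_1 \geq -2\sqrt{r_-}$ and yields the first case, the forced-wave regime $\tilde c_* = \tilde c_1$ gives the second, and the mixed formula in the third case follows from substituting $\tilde c_1 = -c_1$ and $\tilde r_- = r_+$.

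Finally I would reconcile the one-sided spreading-speed definitions. The identity $\sup_{x<-ct} u(t,x) = \sup_{y>ct} v(t,y)$ gives $\overline c^*(u) = \overline c_*(v)$ at once. The lower-speed identity is mildly less symmetric because $\underline c^*$ is defined via $\inf_{-ct<x<-ct+1} u(t,x)$ while $\underline c_*$ uses $\inf_{0<x<ct} u(t,x)$; however, Theorem \ref{thm:main0} gives $\underline c_*(v) = \overline c_*(v) = \tilde c_*$, which forces $\liminf_{t\to\infty} \inf_{0<y<ct} v(t,y) > 0$ and hence $\liminf_{t\to\infty} \inf_{ct-1<y<ct} v(t,y) > 0$ for every $c < \tilde c_*$, yielding $\underline c^*(u) \geq \tilde c_*$. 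Together with the automatic $\underline c^*(u) \leq \overline c^*(u) = \tilde c_*$, the three quantities coincide. The main care-point, and the only mildly delicate step, is precisely this last reconciliation of the two inequivalent-looking infima on the two sides; the sign-chasing in the substitution itself is completely routine.
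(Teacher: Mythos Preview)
Your proposal is correct and follows essentially the same approach as the paper, which simply remarks that the result follows by considering $v(t,x):=u(t,-x)$ and applying Theorem~\ref{thm:main0}. Your additional care in reconciling the asymmetric infimum windows in the definitions of $\underline c^*(u)$ and $\underline c_*(v)$ is a point the paper leaves implicit, and your argument for it is sound.
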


	\begin{proof}[Proof of Theorem \ref{thm:main0}]

		 We will give an explicit formula for the {unique} FL-solution of \eqref{eq:fl} {satisfying} \eqref{eq:ishii2}.
		
		\medskip
		
		\noindent Case (i):  $0<c_1\le  2\sqrt{r_+}$.
		Define \begin{equation*}
			\rho_1(s):=\max\{s^2/4-r_+,0\}.
		\end{equation*}
		It is easy to see that $\rho_1(s)$ is nonnegative and satisfies the first equation of \eqref{eq:fl} in a classical sense. Clearly, $\rho_1$ is automatically a FL-subsolution as $\rho_1(c_1)=0$. Note that $A=\Lambda_1-\frac{c^2_1}{4}\geq r_+-\frac{c^2_1}{4}\ge0$.  Therefore, for any $\psi\in C^1_{pw}$, we have $F_A(\psi'_+(c_1+),\psi'_-(c_1-))\ge A \geq 0$. This implies  that $\rho_1$ is also a FL-supersolution, and hence, $\rho_1(s)$ is a FL-solution of \eqref{eq:fl}.  
		Since $\rho_1$ also satisfies \eqref{eq:ishii2}, $\hat\rho_A = \rho_1$ by uniqueness, 
		It then follows from the definition of $c_*$ in \eqref{eq:sa} that $c_*=2\sqrt{r_+}$.

		\noindent Case (ii):  $2\sqrt{r_+}< c_1\le2\sqrt{\Lambda_1}$. 
		
		Define \begin{equation}\label{e.decrease}
			\rho_2(s):=\begin{cases}
				s^2/4-r_+,& s\ge c_1+ \sqrt{c_1^2-4 r_+},\\
				\max\{\frac{c_1+\sqrt{c_1^2-4r_+}}{2}(s-c_1),0\} & 0\le s \le  c_1+ \sqrt{c_1^2-4r_+}.
			\end{cases}
		\end{equation}
		One could directly check that $\rho_2\in C^1((0,c_1))\cap C^1((c_1,+\infty))$  and  satisfies the first equation of \eqref{eq:fl}. Since $\rho_2(c_1)=0$ and $A\ge0$,  we infer that $\rho_2$ is a FL-solution of \eqref{eq:fl} due to the same reason  as in Case (i). Therefore, by locating the free boundary point in \eqref{e.decrease}, we get
  $c_*=c_1.$

		\noindent Case (iii):  $2\sqrt{\Lambda_1}< c_1\le 2(\sqrt{r_-} + \sqrt{\Lambda_1-r_-})$.
		Set 	\begin{equation}\label{eq:mupmr}
				\mu_+ = \frac{c_1}{2} + \sqrt{\Lambda_1-r_+},\qquad \mu_- = \frac{c_1}{2}-\sqrt{\Lambda_1-r_-}>0.
		\end{equation}
		Define \begin{equation*}
			\rho_3(s):=\begin{cases}
				s^2/4-r_+,& s\ge 2\mu_+,\\
				\mu_+s-(\mu^2_++r_+), & c_1\le s\le  2\mu_+,\\
				\mu_-s-(\mu^2_-+r_-), & c_*\le s\le c_1,\\
				0, &0\le s\le c_*.
			\end{cases}
		\end{equation*}
		where $\mu_+$ and $\mu_-$ are as in \eqref{eq:mupmr}. Noting that $$\rho_3(c_1)=-A=\frac{c^2_1}{4}-\Lambda_1>0.
		$$
		It is easy to check that $\rho_3$ satisfies \eqref{eq:fl} in the classical sense when $s\not=c_1$ or $c_* = \mu_- + r_-/\mu_-$. The FL-subsolution property at $s=c_*$ holds since $\rho_3(c_*) = 0$.
To show it is indeed a FL-subsolution, it suffices to consider the case that $\rho_3-\psi$ attains a global maximum at $s_0=c_1$ for some test function $\psi\in C^1_{pw}$. It then follows from $\rho(s)-\psi(s)\le \rho(c_1)-\psi(c_1)$ for any $s$ close to $c_1$ that $\psi'(c_1+)\ge \rho'_3(c_1+)=\mu_+ \geq \frac{c_1}{2}$, and ${\psi'}(c_1-)\le \rho'_3(c_1-)=\mu_- \leq\frac{c_1}{2}$.
  This implies $$\rho_3(c_1)+H^-(c_1+,\psi'(c_1+))=\rho_3(c_1) + H(c_1+,\frac{c_1}{2}) =\frac{c^2_1}{4}-\Lambda_1-\frac{c^2_1}{4}+r_+=r_+-
		\Lambda_1 \le 0,$$
		and 
		$$\rho_3(c_1)+H^+(c_1-,\psi'(c_1-))=\rho_3(c_1) +  H(c_1-,\frac{c_1}{2}) =\frac{c^2_1}{4}-\Lambda_1-\frac{c^2_1}{4}+r_-=r_--
		\Lambda_1 \le 0,$$
		where we used $\rho_3(c_1)=-A=\tfrac{c_1^2}{4} - \Lambda_1$. Also, we obtain that 
		$$\rho_3(c_1)+F_A(\psi'(c_1+), \psi'(c_1-))\le0.$$
		This yields $\rho_3$ is a  FL-subsolution of \eqref{eq:fl}.

		Next, we verify  that $\rho_3$ is a FL-supersolution. Note that $\rho_3$ is a classical solution (including at the point $s=2\mu_+$) except for two non-differentiable points $s_1=c_1$ and $s_2=c_*=\frac{r_-}{\mu_-}+\mu_-$.  Also, observe from $c_1 \leq 2(\sqrt{r_-} +\sqrt{\Lambda_1-r_-})$ that $\mu_- \le \sqrt{r_-}$, and hence
		\begin{equation}\label{eq:auxr1}
			s_2 = \mu_- + \frac{r_-}{\mu_-}\ge 2\mu_-.
		\end{equation}
		Suppose first that a test function $\psi\in C^1_{pw}$ touches $\rho_3$ from below at $c_1$, then 
$$\rho_3(c_1)+F_A(\psi'(c_1+), \psi'(c_1-))\ge\rho_3(c_1)+A=0.$$
		Suppose next that a test function $\psi\in C^1_{pw}$ touches $\rho_3$ from below at $s_2=\frac{r_-}{\mu_-}+\mu_-$,  it then follows 
 that $0\le \psi'(s_2)\le \mu_-$ (note that $\psi \in C^1$ near $s_2$).$$\rho_3(s_2)+H(s_2,\psi'(s_2))=-s_2\psi'(s_2)+[\psi'(s_2)]^2+r_-\ge -c_*\mu_-+\mu^2_-+r_-=0,$$ where the first inequality is due to $\psi'(s_2) \in [0,\mu_-]$, and that $p \mapsto -s_2 p + p^2 + r_+$ is monotone decreasing in  $[0,\mu_-]$ (thanks to \eqref{eq:auxr1}).
We can then conclude that $\rho_3(s)$ is a FL-solution of \eqref{eq:fl} in $(0,\infty)$, and hence, $c_*=s_2=\frac{r_-}{\mu_-}+\mu_-.$
		
		\medskip
		
		\noindent Case (iv): $c_1> 2(\sqrt{r_-} + \sqrt{\Lambda_1 -r_-})$. Define \begin{equation*}
			\rho_4(s):=\begin{cases}
				s^2/4-r_+,& s\ge 2\mu_+,\\
				\mu_+s-(\mu^2_++r_+), & c_1\le s\le  2\mu_+,\\
				\mu_-s-(\mu^2_-+r_-), & 2\mu_-\le s\le c_1,\\
				\max\{\frac{s^2}{4}-r_-,0\}, &0\le s\le 2\mu_-.
			\end{cases}
		\end{equation*}
		Noting that $\rho_4$ is a classical solution except {at} two points $c_1$ and $s_3=2\sqrt{r_-}$. For $s=c_1$, we could argue similarly to that in the case (iii) to obtain that the junction condition for super- and subsolution hold true.  For $s_3=2\sqrt{r_-}$, 
  $\rho_4(s_3) = 0$ implies that the junction condition for subsolution hold at $s=s_3$.
 Now suppose that $\rho_4-\psi$  attains a global minimum at $s_3$ for some test function $\psi\in C^1_{pw}$, then we infer that $0\le \psi'(s_3)\le \mu_-$ and 
$$\rho_4(s_3)+H(s_3,\psi'(s_3))=-s_3\psi'(s_3)+[\psi'(s_3)]^2+r_-\ge r_--\frac{s^2_3}{4}=0,$$
where we used  $\min_p (-s_3 p + p^2) = -s_3^2/4$ and $s_3 = 2\sqrt{r_-}$. 
		As a consequence, $\rho_4$ is a FL-solution and $c_*=s_3=2\sqrt{r_-}$. 
	\end{proof}
	\begin{proof}[Proof of Corollaries 3.4-3.6]
		By part (b) of Theorem \ref{thm:2.13} implies that $c_* = s_{base}$. Then it is a direct consequence of \cite[Theorem 6 (iv) and Theorem 7]{Lam2022asymptotic}.
	\end{proof}
	

	\section{Preliminary Results} \label{sec:aux}

We will give some preliminary results in this section in preparation of the proof of the main result (Theorem \ref{thm:2.13}) in the next section.

	To study the behavior of $u$ at the leading edge, we consider the rate function 
	\begin{equation}\label{eq:ratefunc}
		w^\ep(t,x) = -\ep \log u^\ep(t,x).
	\end{equation}
	We first observe that $\hat{w}(t,x):=\lim_{\ep \to 0^+}w^\ep(t,x)$, if exists, is $1$-homogeneous.
	\begin{lemma}\label{lem:1hom}
		Suppose $w^\ep \to \hat w$ in $C_{loc}((0,\infty)\times [0,\infty))$, as $\ep\to 0^+$, then $\hat w(t,x) = t\hat\rho(x/t)$ for some function $\hat\rho.$
	\end{lemma}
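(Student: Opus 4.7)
The plan is to exploit the parabolic/KPP scaling that defines $u^\ep$ to derive a one-parameter scaling identity for $w^\ep$, and then pass to the limit. Concretely, unravelling the definitions,
\[
w^\ep(t,x) = -\ep \log u^\ep(t,x) = -\ep \log u\!\left(\tfrac{t}{\ep},\tfrac{x}{\ep}\right).
\]
For any fixed $\lambda>0$, substituting $(\ep',t',x')=(\ep/\lambda,\,t/\lambda,\,x/\lambda)$ leaves the arguments $(t/\ep,x/\ep)$ invariant, which yields the scaling relation
\begin{equation}\label{eq:scalingprop}
w^{\ep/\lambda}\!\left(\tfrac{t}{\lambda},\tfrac{x}{\lambda}\right) = \tfrac{1}{\lambda}\, w^\ep(t,x) \qquad \text{for all } \ep,\lambda>0,\ (t,x)\in(0,\infty)\times[0,\infty).
\end{equation}

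Next I would pass to the limit $\ep\to 0^+$ in \eqref{eq:scalingprop}. Since $\ep/\lambda\to 0^+$ as $\ep\to 0^+$ and the convergence $w^\ep\to \hat w$ holds in $C_{loc}((0,\infty)\times[0,\infty))$, both sides have well-defined pointwise limits on the relevant domain, so that
\[
\hat w\!\left(\tfrac{t}{\lambda},\tfrac{x}{\lambda}\right) = \tfrac{1}{\lambda}\, \hat w(t,x) \qquad \text{for all } \lambda>0,\ (t,x)\in(0,\infty)\times[0,\infty).
\]
This is the $1$-homogeneity of $\hat w$. Specializing $\lambda=t$ (permissible since $t>0$) gives
\[
\hat w(t,x) = t\, \hat w\!\left(1,\tfrac{x}{t}\right),
\]
so defining $\hat\rho(s):=\hat w(1,s)$ on $[0,\infty)$ produces the required representation $\hat w(t,x)=t\hat\rho(x/t)$.

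There is essentially no obstacle here; the only point worth verifying is that the pair $(1,x/t)$ lies in the domain $(0,\infty)\times[0,\infty)$ whenever $(t,x)$ does, which is immediate. The argument depends only on the self-similar structure inherited from the hyperbolic rescaling $u^\ep(t,x)=u(t/\ep,x/\ep)$ and does not use any PDE information beyond the definition of $w^\ep$.
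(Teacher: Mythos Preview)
Your proof is correct and follows essentially the same approach as the paper: both derive the $1$-homogeneity of $\hat w$ directly from the scaling built into the definition $w^\ep(t,x)=-\ep\log u(t/\ep,x/\ep)$, then set $\hat\rho(s)=\hat w(1,s)$. The paper scales the arguments by a factor $A>0$ and rewrites $\hat w(At,Ax)=A\hat w(t,x)$, which is just your identity with $\lambda=1/A$.
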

	\begin{proof}
		Fix a constant $A>0$, then
		$$
		\hat w(At, Ax) = \lim_{\ep \to 0^+} -\ep \log u\left( \frac{At}{\ep}, \frac{Ax}{\ep}\right) =A \lim_{(\ep/A) \to 0^+} - (\ep/A)  \log u\left( \frac{t}{\ep/A}, \frac{x}{\ep/A}\right) = A \hat w(t,x).
		$$
		And the lemma follows if we take $\hat\rho(s) := \hat{w}(1,s)$.
	\end{proof}
	Suppose the limit function $\hat{w}(t,x) = t\hat\rho(x/t)$ exists, and define \begin{equation}\label{eq:hats1}
		\hat  s = \sup\{s \geq 0:~ \hat \rho(s) = 0\},\end{equation} then  we immediately have 
	$$
	u^\ep(t,x) = \exp\left( -\frac{w^\ep(t,x)}{\ep} \right) =  \exp\left( -\frac{\hat w(t,x) + o(1)}{\ep} \right)  =  \exp\left( -\frac{t\hat \rho(x/t) + o(1)}{\ep} \right) \to 0  
	$$
	whenever $x/t > \hat s$. This gives the first part of \eqref{eq:spread1}. 
	
	Furthermore, it can be shown (e.g. \cite[Lemma 3.1]{Liu2021asymptotic} or \cite[Sect. 4]{Evans1989pde}) that $u^\ep(t,x)$ is bounded away from zero in the interior of $\{(t,x): \hat w(t,x) = 0\}$, i.e. the second part of \eqref{eq:spread1} holds. Hence, the study of the spreading speed $c_*$ reduces to the determination of the free boundary point $\hat s$ of $\hat \rho$, given in \eqref{eq:hats1}. Next,  we collect the properties of the eigenvalue problem \eqref{eq:pev0} as well as a few technical results for Hamilton-Jacobi equations with general Hamiltonians $H(s,p)$.
	


 	\subsection{The eigenvalue problem associated with $g(y)$}\label{sec:eigen}

	Note that $s_{base}$ depends only on the values of $g(\pm \infty)$. 
	The next questions are if and when the invasion is enhanced by the specific profile of $g$. The answer is completely determined by the eigenvalue $\Lambda_1$ given by \eqref{eq:lambda1}.	
	This and several other notions of principal eigenvalues are analyzed in \cite{Berestycki2014generalizations}. Here, we recall some basic properties of $\Lambda_1$ and the associated positive eigenfunction.

	\begin{proposition}
		\label{prop:2.1}
		Let $\Lambda_1$ be given by \eqref{eq:lambda1}.     \begin{itemize} 
			\item[{\rm(a)}] Then $\Lambda_1 \geq \max\{g(-\infty), g(+\infty)\}$ and the eigenvalue problem \eqref{eq:pev0} has a postive solution in $C^2_{loc}(\mathbb{R})$ if and only if $\Lambda \in [\Lambda_1,\infty)$. 
			\item[{\rm(b)}]
			If, in addition, $\Lambda_1 > \max\{g(-\infty), g(+\infty)\}$, then $\Lambda_1$ is a simple eigenvalue of \eqref{eq:pev0} and the following statements hold. 
			\begin{itemize} 
				\item[{\rm(i)}] Let $\lambda_\pm:=\sqrt{\Lambda_1-g(\pm\infty)}$ and $\Phi_1(y)$ be the positive eigenfunction corresponding to $\Lambda =\Lambda_1$, then 
				$$
				\Phi_1(y) = \exp( - \lambda_\pm y + o(y))  \quad \text{ as }y \to \pm \infty,
				$$
				i.e. 
				for any sufficiently small $\eta>0$, there exist positive numbers $\overline{C}_\eta, \underline{C}_\eta$, such that
				\begin{equation}\label{est-decay-rate}
					\begin{cases}
						\underline{C}_\eta e^{-(\lambda_++\eta) y}\le \Phi_1(y)\le \bar C_\eta e^{-(\lambda_+-\eta) y} & \text{ if }\, y\ge0,\\
						\underline{C}_\eta e^{(\lambda_-+\eta) y}\le \Phi_1(y)\le \bar C_\eta e^{(\lambda_--\eta) y} &\text{ if }\, y\le 0.
					\end{cases}
				\end{equation}


				\item[{\rm(ii)}] Suppose \eqref{eq:pev0} has a positive eigenfunction $\tilde\Phi \in C^{2}_{loc}(\mathbb{R})$ for some $\tilde\Lambda \in \mathbb{R}$ such that $\tilde\Phi \to 0$ as $|y| \to \infty$, then $\tilde\Lambda = \Lambda_1$ and $\tilde\Phi \in {\rm span}\,\{\Phi_1\}$.
			\end{itemize}
	\item[{\rm(c)}]If $\Lambda_1=\max\{g(-\infty),g(+\infty)\}$. Then for any $\eta>0$, there exists $g_\eta:\mathbb{R}\to \mathbb{R}$, such that 
$$\begin{cases}
	g_\eta(x)=g(x) \quad \text{ for all }\quad |x|\gg 1,\\
	g(x)\le g_\eta(x)\le g(x)+\eta  \quad \text{ for all }\quad x\in\mathbb{R},\\
	\Lambda_1^{\eta}:=\Lambda_1(g_\eta) \quad \text{ satisfies } \quad \Lambda_1^{\eta}>\max\{g(\pm\infty)\}.
\end{cases}$$
		\end{itemize}
	\end{proposition}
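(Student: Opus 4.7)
The plan is to treat parts (a), (b), (c) as separate results using classical Berestycki-Rossi-type arguments for generalized principal eigenvalues in one dimension. For (a), the lower bound $\Lambda_1 \ge \max\{g(\pm\infty)\}$ follows from a Sturm-type oscillation argument: if $\Lambda < g(+\infty)$, pick $\eta > 0$ with $\Lambda + \eta^2 < g(+\infty)$ and $Y$ so that $g(y) \ge \Lambda + \eta^2$ on $[Y, \infty)$; any admissible $\phi > 0$ then satisfies $\phi'' + \eta^2 \phi \le 0$, and pairing via the Wronskian with $\sin(\eta(y - Y))$ on $[Y, Y + \pi/\eta]$ shows that $\phi'\psi - \phi\psi'$ is decreasing while its boundary values force $\phi(Y) + \phi(Y + \pi/\eta) \le 0$, a sign contradiction. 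The analogous argument handles $g(-\infty)$. For existence, I approximate by Dirichlet eigenvalue problems on $(-R, R)$, obtaining $\Lambda_1^D(R) \nearrow \Lambda_1$; after normalizing at $y = 0$, Harnack and local elliptic regularity extract a positive $C^2_{loc}$ limit $\Phi_1$. Existence for $\Lambda > \Lambda_1$ is then immediate from the infimum definition.

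For (b)(i), since $\Lambda_1 - g(y) \to \lambda_\pm^2 > 0$ as $y \to \pm\infty$, the eigenvalue equation is asymptotic to $\phi'' = \lambda_\pm^2 \phi$. I make this rigorous by explicit exponential barriers: for each small $\eta > 0$, choose $Y_\eta$ so large that $|g(y) - g(+\infty)|$ is small on $[Y_\eta, \infty)$; direct substitution shows that $\bar{C}_\eta e^{-(\lambda_+ - \eta) y}$ (resp.\ $\underline{C}_\eta e^{-(\lambda_+ + \eta) y}$) is a super- (resp.\ sub-)solution of \eqref{eq:pev0} with $\Lambda = \Lambda_1$ on $[Y_\eta, \infty)$. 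The comparison principle on $[Y_\eta, M]$ as $M \to \infty$, combined with the preliminary decay $\Phi_1 \to 0$ (obtained by comparison with one supersolution of the form $Ce^{-\lambda_+ y/2}$), sandwiches $\Phi_1$ between these barriers, giving \eqref{est-decay-rate} for $y \ge 0$; the case $y \le 0$ is symmetric. For (b)(ii), part (a) gives $\tilde\Lambda \ge \Lambda_1 > \max\{g(\pm\infty)\}$, so the same barrier argument applied to $\tilde\Phi$ yields $\tilde\Phi \in H^1(\mathbb{R})$; multiplying its equation by $\tilde\Phi$ and integrating identifies $\tilde\Lambda$ with the Rayleigh quotient of $\tilde\Phi$, which the variational characterization of $\Lambda_1$ bounds above by $\Lambda_1$, forcing $\tilde\Lambda = \Lambda_1$. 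Simplicity then follows from the constancy of the Wronskian $W = \tilde\Phi \Phi_1' - \Phi_1 \tilde\Phi'$, which vanishes at $\pm\infty$ by exponential decay of both factors, so $W \equiv 0$ and $\tilde\Phi \in \mathrm{span}\,\{\Phi_1\}$.

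For (c), I set $g_\eta := g + \eta \chi_L$, with $\chi_L \in C^\infty(\mathbb{R})$, $0 \le \chi_L \le 1$, $\chi_L \equiv 1$ on $[-L, L]$, and $\mathrm{supp}\,\chi_L \subset [-L-1, L+1]$; the three bulleted conditions are immediate. Since $g_\eta \equiv g + \eta$ on $(-L, L)$, the Dirichlet principal eigenvalues satisfy $\Lambda_1^D(g_\eta, (-L, L)) = \Lambda_1^D(g, (-L, L)) + \eta$, while monotonicity gives $\Lambda_1(g_\eta) \ge \Lambda_1^D(g_\eta, (-L, L))$. The Dirichlet exhaustion identity $\Lambda_1^D(g, (-L, L)) \nearrow \Lambda_1(g) = \max\{g(\pm\infty)\}$ as $L \to \infty$ (proved by the same Harnack/truncation scheme used in (a)) lets me choose $L$ so large that $\Lambda_1^D(g, (-L, L)) > \max\{g(\pm\infty)\} - \eta/2$, whence $\Lambda_1(g_\eta) > \max\{g(\pm\infty)\} + \eta/2 > \max\{g_\eta(\pm\infty)\}$, as required.

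The main obstacle is the matching lower barrier in (b)(i): the comparison principle readily yields an upper bound on $\Phi_1$, but the lower bound requires identifying $\Phi_1$ as the slowest-decaying positive solution of the asymptotic ODE, which hinges on coupling the barrier construction with the Dirichlet exhaustion of (a) to exclude faster-than-exponential decay. This is also the step where the hypothesis $\Lambda_1 > \max\{g(\pm\infty)\}$ is essential, complementary to the regime addressed by (c).
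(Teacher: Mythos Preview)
Your approaches to (a), (b)(ii), and (c) are all correct and genuinely different from the paper's. For (a) you use a Sturm oscillation argument with $\sin(\eta(y-Y))$, while the paper argues directly from convexity of $\phi$ for $y\gg 1$; both are short and either works. For (b)(ii) the paper instead shows that if $\tilde\Lambda>\Lambda_1$ then $\tilde\Phi/\Phi_1\to 0$ at $\pm\infty$, touches $\Phi_1$ from below by $k\tilde\Phi$, and invokes the strong maximum principle; your variational/Wronskian route is equally valid (it exploits self-adjointness, which the paper's argument does not need). For (c) your direct comparison $\Lambda_1(g_\eta)\ge \Lambda_1^D(g_\eta,(-L,L))=\Lambda_1^D(g,(-L,L))+\eta$ together with $\Lambda_1^D(g,(-L,L))\nearrow\Lambda_1(g)$ is cleaner than the paper's proof, which argues by contradiction: assuming $\Lambda_1(g^k_\eta)\equiv\Lambda_1(g)$ for all $k$, it extracts via Harnack a limiting positive eigenfunction for the potential $g+\eta$ at level $\Lambda_1(g)$, contradicting $\Lambda_1(g+\eta)=\Lambda_1(g)+\eta$.

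The gap is in (b)(i), exactly where you flag it. Your upper barrier works (via Dirichlet exhaustion: each $\Phi_1^R$ vanishes at $R$, so comparison on $[Y_\eta,R]$ is legitimate and survives the limit). But the lower barrier fails by the same token: on $[Y_\eta,M]$ you would need $\Phi_1(M)\ge \underline C_\eta e^{-(\lambda_++\eta)M}$ at the right endpoint, which is precisely the estimate you are trying to prove; and going through $\Phi_1^R$ makes it worse since $\Phi_1^R(R)=0$. Your proposed fix via ``Dirichlet exhaustion to exclude faster-than-exponential decay'' does not give this. The paper's device is to use a \emph{compactly supported} subsolution $\underline\Phi^M_\epsilon(y)=\max\{0,\,M(e^{-\bar\lambda(a)y}-\epsilon)\}$: for each fixed $\epsilon>0$ this vanishes for $y$ large, so for $M$ small enough it lies below $\Phi_1$ on all of $[a,\infty)$; a sliding argument in $\epsilon$ then keeps it below as $\epsilon\downarrow 0$, yielding $\Phi_1(y)\ge M e^{-\bar\lambda(a)y}$. (Symmetrically, the paper adds $+\epsilon$ to the supersolution to avoid needing $\Phi_1\to 0$ a priori.) You can repair your argument either by adopting this $\pm\epsilon$ trick, or by an ODE/Riccati argument: setting $v=-\Phi_1'/\Phi_1$, one has $v'=v^2-(\Lambda_1-g)$, and if $v(y_0)>\lambda_++\eta$ for some large $y_0$ then $v$ blows up in finite time, contradicting $\Phi_1>0$; hence $v\le\lambda_++\eta$ eventually, which integrates to the desired lower bound.
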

	For the convenience of the reader, we provide the proof of the above result in Appendix \ref{sec:prop21}.

 The following result describes the effect of $\Lambda_1$ in enhancing the spreading speed $c_*$.

	\begin{corollary}\label{cor:main0}
		Let $\Lambda_1$ be given by \eqref{eq:lambda1} and let $s_{base}$ be given by Section \ref{sec:ishii}.
		\begin{itemize}
			\item[{\rm(a)}] If $\Lambda_1 = \max\{g(\pm \infty)\}$, then for each $c_1>0$, we have $c_* = s_{base}$.
			\item[{\rm(b)}] If $\Lambda_1 > \max\{g(\pm \infty)\}$, then for some $c_1>0$, we have  $c_* > s_{base}$.
		\end{itemize}
	\end{corollary}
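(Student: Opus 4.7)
Part (a) is immediate from Theorem \ref{thm:2.13}(b): under the hypothesis $\Lambda_1 = \max\{g(\pm\infty)\}$, that result directly gives $c_* = \hat{s}_{base} = s_{base}$ for every $c_1 > 0$, so no additional work is required.

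For part (b), the plan is to produce a single shifting speed $c_1$ at which the $\Lambda_1$-enhancement is strict, reading it off directly from the explicit formula in Theorem \ref{thm:main0}. The hypothesis $\Lambda_1 > \max\{r_+, r_-\}$ guarantees that the interval $(2\sqrt{\max\{r_+,r_-\}},\, 2\sqrt{\Lambda_1}]$ is non-empty; for any $c_1$ in this interval (for concreteness take $c_1 = 2\sqrt{\Lambda_1}$), Case (ii) of Theorem \ref{thm:main0} gives $c_* = c_1$ on the nose.

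It then remains to check $s_{base} < c_1$ at this $c_1$. The crucial observation is that $s_{base}$ depends only on $r_+$ and $r_-$, since it is defined via the Ishii solution of \eqref{eq:ishii1} whose Hamiltonian $\tilde{H}$ uses only these asymptotic values. Thus the formula for $s_{base}(c_1)$ can be extracted from the $\Lambda_1 = \max\{r_+,r_-\}$ instance of Corollaries \ref{cor:3.2}--\ref{cor:3.4}, in which $c_* = s_{base}$ coincide. A short case analysis on the sign of $r_+ - r_-$ using these piecewise expressions yields the uniform bound $s_{base}(c_1) \le 2\sqrt{\max\{r_+,r_-\}}$ for all $c_1 \ge 2\sqrt{\max\{r_+,r_-\}}$, so $s_{base} \le 2\sqrt{\max\{r_+,r_-\}} < c_1 = c_*$, completing (b). The only genuinely computational step is the case $r_+ > r_-$, where $s_{base}$ is given by the non-constant middle formula $\mu + r_-/\mu$ with $\mu = c_1/2 - \sqrt{r_+-r_-}$; showing $c_1 - s_{base} > 0$ there reduces to the algebraic inequality $\mu^2 + 2\mu\sqrt{r_+-r_-} - r_- > 0$, equivalent to $c_1 > 2\sqrt{r_+}$, which I regard as the main (but minor) obstacle.
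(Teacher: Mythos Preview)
Your proposal is correct and follows exactly the paper's approach: the paper's own proof is just two lines, citing Theorem \ref{thm:2.13} for (a) and Theorem \ref{thm:main0} for (b). You have simply filled in the details for (b) by choosing $c_1 = 2\sqrt{\Lambda_1}$ and checking $s_{base} < c_1$ via the explicit $s_{base}$ formulas (which, as you correctly observe, depend only on $r_\pm$ and $c_1$); this is precisely what ``direct consequence of Theorem \ref{thm:main0}'' unpacks to.
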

	\begin{proof}
 Statement (a) follows from Theorem \ref{thm:2.13}. Statement (b) is a direct consequence of Theorem \ref{thm:main0}.
	\end{proof}

	

	\subsection{The continuity of subsolutions}\label{sec:4.4}
	
	
	We discuss the weak continuity condition for sub-solutions, which are half-relaxed limits of solutions to reaction-diffusion equations. This property first appeared in \cite{Barles1990comparison}. 	
	\begin{lemma}\label{cor:im1rho'}
		Suppose $\underline\rho$ is nonnegative and  satisfies $\underline\rho(0)=0$, and satisfies 
		$$
		\min\{ \rho, \rho+ H(s,\rho')\} \leq 0 \qquad  \text{ in }[0,\infty)
		$$
		in viscosity sense (of Ishii).
		\begin{itemize}
			\item[{\rm(a)}] If $H(s,0) >0$ for each $s>0$, then $\underline\rho$ is nondecreasing. 
			\item[{\rm(b)}] If $\lim\limits_{|p| \to \infty} \inf\limits_{s\in K} H(s,p) \to \infty$ for each compact set $K\subset [0,\infty)$, then $\underline\rho \in {\rm Lip}_{loc}([0,\infty))$. 
			In particular, it satisfies the weak continuity condition:
				\begin{equation}\label{eq:weakcontrho}
				\underline \rho(c_1) = \limsup_{s \to c_1+} \underline \rho(s) \quad \text{ and } \quad \underline \rho(c_1) = \limsup_{s \to c_1-} \underline \rho(s).
			\end{equation}
		\end{itemize}
		
	\end{lemma}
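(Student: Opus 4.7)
The plan is to prove (a) by a contradiction argument at an interior maximum of $\underline\rho$, and to prove (b) by combining the coercivity of $H$ with a standard Lipschitz estimate for viscosity subsolutions on the positivity set $\{\underline\rho>0\}$, which is then extended across the zero set by upper semicontinuity.

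For part (a), I would assume for contradiction that $\underline\rho$ is not nondecreasing, so that there exist $0<s_1<s_2$ with $\underline\rho(s_1)>\underline\rho(s_2)\ge 0$. Since $\underline\rho(0)=0$ and $\underline\rho$ is upper semicontinuous, $M:=\sup_{[0,s_2]}\underline\rho$ is positive and is attained at some $s_\ast\in(0,s_2)$; the endpoints are excluded because $\underline\rho(0)=0<M$ and $\underline\rho(s_2)<\underline\rho(s_1)\le M$. The smooth test function $\psi(s):=M+(s-s_\ast)^2$ touches $\underline\rho$ from above at $s_\ast$ with $\psi'(s_\ast)=0$, so the Ishii subsolution inequality together with $\underline\rho(s_\ast)=M>0$ forces $M+H(s_\ast,0)\le 0$, contradicting $H(s_\ast,0)>0$.

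For part (b), I would work first on the open set $U:=\{\underline\rho>0\}$, where the subsolution inequality collapses to $\underline\rho+H(s,\underline\rho')\le 0$, so that in particular $H(s,\underline\rho')\le 0$ in viscosity sense. By the coercivity hypothesis, for every compact $K\subset[0,\infty)$ there exists $L_K>0$ such that $\{p:H(s,p)\le 0\}\subset[-L_K,L_K]$ uniformly in $s\in K$. A standard doubling-of-variables argument (see, e.g., \cite{barles2023modern}) then yields $|\underline\rho(s)-\underline\rho(s')|\le L_K|s-s'|$ whenever $s,s'$ lie in the same connected component of $U\cap K$. To globalize, one observes that $\underline\rho\equiv 0$ on $\{\underline\rho=0\}$, and that upper semicontinuity combined with $\underline\rho\ge 0$ forces $\underline\rho\to 0$ continuously at every boundary point of $U$; summing Lipschitz contributions across intervening zero intervals then upgrades the estimate to $\underline\rho\in\mathrm{Lip}_{loc}([0,\infty))$. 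In particular $\underline\rho$ is continuous, and the weak continuity condition \eqref{eq:weakcontrho} is immediate.

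The main obstacle is the Lipschitz bound on $U$ in step~(b): extracting an explicit Lipschitz constant from the viscosity inequality $H(s,\underline\rho')\le 0$ requires the doubling-of-variables machinery, which has to be carried out carefully since $\underline\rho$ is only USC a priori and $H$ may itself be discontinuous in $s$, so that one must replace $H$ by its lower semicontinuous envelope at the jump points. By contrast, part (a) and the gluing step in (b) are comparatively routine once this interior Lipschitz bound is available.
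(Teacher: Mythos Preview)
Your argument for part (a) is correct and is essentially the standard interior-maximum argument.

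For part (b) your two-step plan (Lipschitz on $U=\{\underline\rho>0\}$, then glue across $\{\underline\rho=0\}$) can be made to work, but it is longer than necessary and the tool you name is not quite the right one. Literal doubling of variables---penalizing $\underline\rho(s)-\underline\rho(s')$ by $|s-s'|^2/(2\epsilon)$---uses the viscosity inequality both at the $s$-maximum and at the $s'$-minimum; with only a subsolution the $s'$-side gives nothing, so doubling alone does not produce a Lipschitz bound. What you actually need is the cone/tent-function argument: fix $s_0$, touch $\underline\rho$ from above by $\alpha+M|s-s_0|$, and derive a contradiction at a touching point $s_1\neq s_0$. If that is what you had in mind, then after carrying it out on each component of $U\cap K$ your gluing step is correct (USC together with $\underline\rho\ge 0$ indeed forces $\underline\rho\to 0$ at $\partial U$).

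The paper short-circuits both the restriction to $U$ and the gluing. For fixed $[0,\bar s]$ it picks $M$ with $H(s,p)>0$ for $s\in[0,\bar s+1]$, $|p|\ge M$, and for each $s_0\in[0,\bar s]$ builds a single test function
\[
\Psi(s)=\alpha+M|s-s_0|\ \text{on }[0,\bar s],\qquad
\Psi(s)=\alpha+M\Big[|\bar s-s_0|+\tfrac{1}{\bar s+1-s}-1\Big]\ \text{on }[\bar s,\bar s+1),
\]
with $\alpha$ minimal so that $\Psi\ge\underline\rho$. If the contact point $s_1$ equals $s_0$ the one-sided Lipschitz bound $\underline\rho(s)-\underline\rho(s_0)\le M|s-s_0|$ drops out immediately. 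If $s_1\neq s_0$, the key observation is that $\underline\rho(s_1)=\Psi(s_1)>\Psi(s_0)\ge\underline\rho(s_0)\ge 0$, so $\underline\rho(s_1)>0$ \emph{automatically}; the subsolution inequality at $s_1$ then contradicts $|\Psi'(s_1)|\ge M$. This single observation makes the detour through $U$ and the subsequent gluing unnecessary.
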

	\begin{proof}
		Part (a) is due to \cite[Lemma 2.9]{Lam2022asymptotic}.
		For Part (b), fix a bounded interval $K=[0,\bar s]$ with $\bar s>0$, and let $M>0$ be given such that
		$$
		H(s,p)>0 \quad \text{ for all }s \in [0,\bar s +1],~|p|\geq M. 
		$$
		Fix any point $s_0 \in [0,\bar s]$, we claim that
		\begin{equation}\label{eq:lippp}
			\underline\rho(s) - \underline\rho(s_0) \leq M|s-s_0| \quad \text{ for all }s\in [0,\bar s].  
		\end{equation}
		For this purpose, define
		$$
		\Psi(s) = \begin{cases}
			\alpha + M|s-s_0| &\text{ for }s \in [0,\bar s],\\
			\alpha + M\left[|\bar s-s_0| + \frac{1}{\bar s + 1-s}-1\right] &\text{ for }s \in [\bar s,\bar s+1),
		\end{cases}
		$$
		for any $\alpha>0$, then $\Psi$ is continuously differentiable except at $s_0$, such that 
		\begin{equation}\label{eq:lippp1}
			|\Psi'(s)| \geq M \quad \text{ for }s \in [0,\bar s + 1)\setminus\{s_0\}.
		\end{equation}
		Next, take the minimal $\alpha$ such that $\Psi$ touches $\underline\rho$ from above at some point $s_1\in[0,\bar s+1)$, {which is possible since $\underline{\rho}$ is upper semicontinuous, and thus bounded on $[0,\bar s+1]$.}
		
		If $s_1 = s_0$, then $\alpha = \rho(s_0)$ and we obtain \eqref{eq:lippp}. 
		Suppose to the contrary that $s_1\neq s_0$.  We first observe that $\underline\rho(s_1)>0$, which follows from $\Psi(s_0)<\Psi(s)$ for $s\in[0,\bar s+1)$ and 
  $$
  \underline{\rho}(s_1) = \Psi(s_1) > \Psi(s_0) \geq \underline{\rho}(s_0) \geq 0.
  $$

		
		 Now  $\underline{\rho}(s_1)>0$, so the definition of viscosity subsolution implies that
		$$
		0 \geq \underline\rho(s_1) + H(s_1, \Psi'(s_1)).
		$$
But the right hand side is strictly positive thanks to \eqref{eq:lippp1} and the choice of $M$.  This is a contradiction.  Therefore, \eqref{eq:lippp} is proved.
		
		Since $M$ depends only on $K=[0,\bar s]$ but not on $s_0$, we can reverse the role of $s$ and $s_0$ in \eqref{eq:lippp} to conclude that
		$$
		|\underline\rho(s)-\underline\rho(s_0)| \leq M|s-s_0| \quad \text{ for all }s,s_0 \in [0,\bar s].
		$$
		This proves the Lipschitz continuity of $\underline\rho$ in any compact subset of $[0,\infty)$.
	\end{proof}

	\subsection{Critical slope lemmas}\label{sec:4.5}
	
	
	Let $U$ be an open interval in $\mathbb{R}$ containing $c_1$, and recall that
	$$
	C^1_{pw}(U) = C(U) \cap C^1(U \cap (-\infty,c_1])  \cap C^1(U \cap [c_1,\infty)).
	$$
		\begin{description}
			\item[(HH)] Assume $p \mapsto H(s,p)$ is convex and coercive,
   and $H(c_1+,p)$ and $H(c_1-,p)$ exist. 
	\end{description}
	\begin{lemma}\label{cor:subsolr}
		Assume that {\bf(HH)} holds.
  Let $\underline\rho: U \to [0,\infty)$ satisfy the following: 
		\begin{itemize}
			\item[{\rm(i)}] $\underline\rho$ is a viscosity subsolution of 
			$$
			\min\{\rho,\rho + H(s,\rho')\}=0 \quad \text{ in }\{s \in U:~ s>c_1\}.
			$$
			\item[{\rm(ii)}] $\underline\rho$ satisfies the weak continuity condition \eqref{eq:weakcontrho}.
			\item[{\rm(iii)}] $\underline \rho(c_1) >0$.
		\end{itemize}
		Suppose 
		there is a test function 
		$\varphi \in C^1_{pw}(U)$ that touches $\underline \rho$ from above only at $c_1$. Let $p_+ = \varphi'(c_1+)$, and 
		\begin{equation}\label{eq:crit_barp1r}
			\overline{p}_+:=\inf\left\{\overline p \in\mathbb{R}:~ \exists r>0,~ \varphi(s) + \overline p (s-c_1) \geq \underline \rho(s) ~\text{ for } 0\le s-c_1<r\right\}.
		\end{equation}
		Then $-\infty<\bar{p}_+ \leq 0$ and
		$$
		\underline\rho(c_1) + H(c_1+,p_+ + \bar{p}_+) \leq 0.
		$$
	\end{lemma}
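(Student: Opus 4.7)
The plan is to prove the bounds $-\infty < \bar p_+ \leq 0$ first, then exhibit a sequence of genuine interior touching points of $\underline\rho$ produced by perturbing $\varphi$ by a linear slope, and finally pass to the limit in the viscosity subsolution inequality. The upper bound $\bar p_+ \leq 0$ is immediate from the touching assumption: since $\varphi \geq \underline\rho$ in a right-neighborhood of $c_1$, the value $\bar p = 0$ is admissible in \eqref{eq:crit_barp1r}. For $\bar p_+ > -\infty$, I would use the coercivity in {\bf(HH)} together with $\underline\rho(c_1)>0$: a barrier argument on the one-sided neighborhood $[c_1,c_1+\delta]$, modeled on Lemma \ref{cor:im1rho'}(b) but with the weak continuity \eqref{eq:weakcontrho} supplying the boundary value at $c_1$, yields a Lipschitz estimate for $\underline\rho$ near $c_1^+$. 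Combined with the one-sided $C^1$ expansion $\varphi(s)-\varphi(c_1)=p_+(s-c_1)+o(s-c_1)$, this forces the difference quotient $F(s):=(\underline\rho(s)-\varphi(s))/(s-c_1)$ to remain bounded as $s\to c_1^+$, whence $\bar p_+$ is finite.

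The key analytic identity is $\bar p_+ = \limsup_{s\to c_1^+} F(s)$, obtained directly from the definition: $\bar p$ is admissible iff $F\leq \bar p$ on some $(c_1,c_1+r)$. Extract a sequence $s_n \to c_1^+$ with $F(s_n)\to \bar p_+$ and, for each $n$, set
\[
q_n := \sup_{s\in (c_1, s_n]} F(s),
\]
so that the test function $\Phi_n(s):=\varphi(s)+q_n(s-c_1)$ satisfies $\Phi_n\geq \underline\rho$ on $[c_1,s_n]$ with equality at $c_1$ and at some point $\tilde s_n\in(c_1,s_n]$ furnished by upper semicontinuity of $\underline\rho$ together with the Lipschitz regularity from Step 1. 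Since $\tilde s_n\in(c_1,s_n]$ and $s_n\to c_1^+$, automatically $\tilde s_n\to c_1^+$, and the upper bound $q_n\leq \bar p_++\eta$ valid on $(c_1,c_1+r_\eta)$ for every $\eta>0$, together with $q_n\geq F(s_n)$, forces $q_n\to \bar p_+$.

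The final step applies the viscosity subsolution inequality at $\tilde s_n > c_1$. Since $\underline\rho(\tilde s_n)\to \underline\rho(c_1)>0$, the subsolution property reduces, for $n$ large, to
\[
\underline\rho(\tilde s_n) + H(\tilde s_n,\, \varphi'(\tilde s_n)+q_n) \leq 0.
\]
Passing to the limit using $\varphi'(\tilde s_n)\to p_+$ (from $C^1$-regularity of $\varphi$ on the right), $q_n\to \bar p_+$, continuity of $\underline\rho$, and the existence of $H(c_1+,\cdot)$ from {\bf(HH)}, I obtain $\underline\rho(c_1) + H(c_1+,\, p_++\bar p_+)\leq 0$, which is the desired conclusion.

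The most delicate point will be the construction of a legitimate interior touching point $\tilde s_n$. A simple maximizer of $\underline\rho-\Phi_n$ on $[c_1,s_n]$ could coincide with $c_1$ or with the right endpoint $s_n$, in which case $\Phi_n$ might fail to dominate $\underline\rho$ in a two-sided neighborhood of $\tilde s_n$, as required by the viscosity test on $\{s>c_1\}$. The standard Imbert--Monneau remedy is to add a small strictly convex penalty, such as $\epsilon (s-s_n)^2$, to $\Phi_n$: this pushes the values near $s_n$ upward and forces the minimum of $\Phi_n-\underline\rho$ strictly inside $(c_1,s_n)$, after which one sends $\epsilon\to 0^+$. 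The coercivity of $H$ is essential throughout: without it, one could not guarantee Lipschitz regularity of $\underline\rho$, and $\bar p_+$ could fail to be finite or the convergence $q_n\to \bar p_+$ could fail.
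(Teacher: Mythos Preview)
Your overall strategy---perturb $\varphi$ by a linear slope to produce a touching point strictly inside $\{s>c_1\}$, apply the viscosity inequality there, and pass to the limit---is the Imbert--Monneau approach that the paper follows. However, your proposed device for forcing the touching point into the interior does not work. The penalty $\epsilon(s-s_n)^2$ vanishes at $s_n$ and is positive elsewhere, so if the minimum of $\Phi_n-\underline\rho$ on $[c_1,s_n]$ sits at $s_n$ (precisely the problematic case), the perturbed function still has its minimum at $s_n$ with value $0$; nothing has moved. Your phrase ``pushes the values near $s_n$ upward'' is backwards: the penalty pushes values \emph{away from} $s_n$ upward while leaving $s_n$ untouched.

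The paper's remedy is different. For the inequality $\underline\rho(c_1)+H(c_1+,p_++\bar p_+)\leq 0$, one uses the slope $\bar p_+-\epsilon$ (which by definition fails to dominate on every right-neighborhood, yielding a point $s_\epsilon\in(c_1,c_1+r_\epsilon/2)$ where $\underline\rho$ exceeds the perturbed test) together with a smooth cutoff $\Psi_{r_\epsilon}$ equal to $0$ on $|s-c_1|<r_\epsilon/2$ and to $-r_\epsilon$ outside $|s-c_1|<r_\epsilon$. The combination $\Phi=\varphi+2\epsilon\Psi_{r_\epsilon}+(\bar p_+-\epsilon)(s-c_1)$ then satisfies $\Phi\geq\underline\rho$ at both $c_1$ and $c_1+r_\epsilon$ but $\Phi(s_\epsilon)<\underline\rho(s_\epsilon)$, forcing a genuine interior maximum of $\underline\rho-\Phi$ in $(c_1,c_1+r_\epsilon)$.

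For the finiteness $\bar p_+>-\infty$, the paper also does not pass through a Lipschitz estimate. It argues by contradiction: assuming $p_n\to-\infty$ are admissible on $[c_1,c_1+r_n]$, weak continuity supplies $s_m\to c_1^+$ with $\underline\rho(s_m)\to\underline\rho(c_1)$, and one adds the \emph{singular} barrier $(s_m-c_1)^2/(s-c_1)$, which blows up at $c_1$ and forces the maximum of $\underline\rho-\Psi_m$ strictly to the right of $c_1$. Passing to the limit and invoking coercivity of $H(c_1+,\cdot)$ bounds $p_n$ from below. Your Lipschitz route via an adaptation of Lemma~\ref{cor:im1rho'} is plausible in spirit, but that lemma's barrier argument cannot be invoked when the touching point lands at $c_1$, since the subsolution property is only available on $\{s>c_1\}$; you would still need a mechanism (such as the singular barrier above) to push the contact off the junction.
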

	The proof is a modification of that in \cite[Lemmas 2.9 and 2.10]{Imbert2017flux}, where we use the weak continuity condition.	 	For the convenience of the reader, we provide the proof in Section \ref{corsubsup}.		

	\begin{remark}\label{rem:subsoll}
		Suppose, in addition, 
  that $\underline\rho$ is a viscosity subsolution of 
		$\min\{\rho,\rho + H(s,\rho')\}=0$ in $\{s \in U:~ s<c_1\}.$ Then 
		$$
		\underline\rho(c_1) + H(c_1-,-p_- - \bar{p}_-) \leq 0,
		$$
		where $-p_- = \varphi'(c_1-)$ and $\overline{p}_- \in (-\infty,0]$ is given by
		\begin{equation}\label{eq:crit_barp1'r}
			\overline{p}_-:=\inf\left\{\overline p \in\mathbb{R}:~ \exists r>0,~ \varphi(s) - \overline p (s-c_1) \geq \underline \rho(s) ~\text{ for } -r<s-c_1\le 0\right\}.    
		\end{equation}
	\end{remark}

	\begin{lemma}\label{cor:supersolr}
		Assume that {\bf(HH)} is valid.	Suppose $\overline\rho$ is a viscosity supersolution of 
		$$
		\rho + H(s,\rho')=0 \quad \text{ in }\{s \in U:~ s>c_1\}.
		$$
		and there is a test function 
		$\varphi \in C^1_{pw}(U)$ that touches $\overline\rho$ from below only at $c_1$. Let $p_+ = \varphi'(c_1+)$, and 
		\begin{equation}\label{eq:crit_barp2r}
			\underline{p}_+:=\sup\left\{\underline p \in\mathbb{R}:~ \exists r>0,~ \varphi(s) + \underline p (s-c_1) \leq \overline\rho(s) ~\text{ for }0\le s-c_1<r\right\}.
		\end{equation}
		If $\underline{p}_+ <\infty$, then
		$$
		\overline\rho(c_1) + H(c_1+,p_+ + \underline{p}_+) \geq 0, \quad \text{ with }\quad \underline{p}_+ \geq 0.
		$$
	\end{lemma}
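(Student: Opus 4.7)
The plan is to mirror the subsolution critical slope argument (Lemma \ref{cor:subsolr} and \cite[Lemmas 2.9--2.10]{Imbert2017flux}), with the roles of sub/supersolution and of $\inf$/$\sup$ exchanged.

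The claim $\underline{p}_+ \geq 0$ is immediate: because $\varphi$ touches $\overline\rho$ from below on a right-neighborhood of $c_1$, the slope $\underline p = 0$ already lies in the set defining \eqref{eq:crit_barp2r}. Using $\varphi(c_1)=\overline\rho(c_1)$, I would next rewrite the critical slope as a liminf of difference quotients,
$$\underline{p}_+ \;=\; \sup_{r>0}\,\inf_{s\in(c_1,c_1+r)}\frac{\overline\rho(s)-\varphi(s)}{s-c_1} \;=\; \liminf_{s\to c_1^+}\frac{\overline\rho(s)-\varphi(s)}{s-c_1}.$$
Since $\underline{p}_+ + 1/n > \underline{p}_+$, this slope is not in the defining set, so for every $n$ there exists $\sigma_n \in (c_1,c_1+1/n)$ satisfying
$$\overline\rho(\sigma_n) - \varphi(\sigma_n) - (\underline{p}_+ + 1/n)(\sigma_n - c_1) \;<\; 0.$$

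The heart of the proof is to convert this sign change into an interior local minimum of $\overline\rho - \Phi_n$ for
$$\Phi_n(s) \;:=\; \varphi(s) + (\underline{p}_+ + 1/n)(s-c_1) - K_n(s-c_1)^2,$$
on a compact interval $[c_1, c_1+r_n]$. I would tune $K_n,r_n>0$ so that the quadratic correction is small enough at $\sigma_n$ that $(\overline\rho-\Phi_n)(\sigma_n)<0$ is preserved, yet large enough at the right endpoint that $(\overline\rho-\Phi_n)(c_1+r_n)>0$; the latter is enabled by the local boundedness from below of the LSC function $\overline\rho$. Since $(\overline\rho-\Phi_n)(c_1)=0$ and $\overline\rho-\Phi_n$ is lower semicontinuous, it attains its (strictly negative) infimum on $[c_1,c_1+r_n]$ at some $\tilde s_n \in (c_1, c_1+r_n)$. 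A vertical shift of $\Phi_n$ is then a $C^1$ test function touching $\overline\rho$ from below at $\tilde s_n>c_1$, so the viscosity supersolution property yields
$$\overline\rho(\tilde s_n) + H(\tilde s_n, \Phi_n'(\tilde s_n)) \;\geq\; 0.$$

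To conclude, I would pass $n\to\infty$ with $\tilde s_n \to c_1^+$. The argmin property gives $\overline\rho(\tilde s_n) \leq \Phi_n(\tilde s_n) \to \varphi(c_1)=\overline\rho(c_1)$, which combined with lower semicontinuity forces $\overline\rho(\tilde s_n) \to \overline\rho(c_1)$. Meanwhile $\Phi_n'(\tilde s_n) = \varphi'(\tilde s_n) + \underline{p}_+ + 1/n - 2K_n(\tilde s_n-c_1) \to p_+ + \underline{p}_+$ by $C^1$-regularity of $\varphi$ on the right of $c_1$ and the vanishing of the penalty term; hypothesis {\bf(HH)} together with the existence of $H(c_1+,\cdot)$ delivers the continuity in $(s,p)$ needed to obtain $\overline\rho(c_1) + H(c_1+, p_+ + \underline{p}_+) \geq 0$. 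The main difficulty is the interior-minimum construction: the parameters $(K_n,r_n)$ must be balanced so that the penalty is dominated by the negative gap at $\sigma_n$, dominates the behavior at the right endpoint, and yet $K_n(\tilde s_n-c_1)\to 0$ in the limit. If direct tuning proves delicate, I would fall back on a penalization-by-doubling-variables argument in the spirit of Crandall--Ishii to force the argmin to remain in the interior while respecting all three constraints simultaneously.
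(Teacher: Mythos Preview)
Your overall strategy---perturb the test function to force an interior local minimum in $(c_1,c_1+r_n)$, apply the supersolution inequality there, and pass to the limit---matches the paper's, and your handling of $\underline p_+\ge 0$ and of the convergence $\overline\rho(\tilde s_n)\to\overline\rho(c_1)$ is fine.

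The gap is precisely the point you flagged: the quadratic penalty $-K_n(s-c_1)^2$ cannot satisfy your three constraints simultaneously. To push the right endpoint above zero using only $\overline\rho\ge\varphi$ you would need $K_nr_n\ge \underline p_+ +1/n$, whence the derivative correction $2K_n(\tilde s_n-c_1)\le 2K_nr_n$ cannot tend to $0$ when $\underline p_+>0$. Even if you exploit the sharper bound $\overline\rho(s)\ge\varphi(s)+(\underline p_+-1/n)(s-c_1)$ on $[c_1,c_1+r_n]$ (which reduces the endpoint requirement to $K_nr_n\ge 2/n$), you then must also keep $K_n(\sigma_n-c_1)^2<\delta_n$, and since one only knows $0<\delta_n\le \tfrac{2}{n}(\sigma_n-c_1)$ with no lower bound, there is no consistent choice of $(K_n,r_n,\sigma_n)$ that closes the loop. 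The doubling-of-variables fallback is not obviously applicable here; that device is designed to compare a sub- and a supersolution, not to manufacture an interior minimum of a single lower semicontinuous function.

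The paper resolves this by replacing the quadratic with a penalty that is \emph{flat near $c_1$}. Concretely, with a fixed smooth $\Psi:\mathbb R\to[-1,0]$ equal to $0$ on $(-\tfrac12,\tfrac12)$ and to $-1$ outside $(-1,1)$, one sets $\Psi_{r_\varepsilon}(s)=r_\varepsilon\Psi((s-c_1)/r_\varepsilon)$ and takes
\[
\Phi(s)=\varphi(s)+(\underline p_++\varepsilon)(s-c_1)+2\varepsilon\,\Psi_{r_\varepsilon}(s)\qquad(s>c_1).
\]
This decouples the three constraints: (i) $\Psi_{r_\varepsilon}\equiv0$ on $(c_1,c_1+r_\varepsilon/2)$, so with $s_\varepsilon\in(c_1,c_1+r_\varepsilon/2)$ the negativity $(\overline\rho-\Phi)(s_\varepsilon)<0$ is preserved for free; (ii) at $s=c_1+r_\varepsilon$ the bump contributes exactly $-2\varepsilon r_\varepsilon$, turning the effective slope into $\underline p_+-\varepsilon$ and yielding $(\overline\rho-\Phi)(c_1+r_\varepsilon)\ge0$ directly from the defining inequality for $\underline p_+-\varepsilon$; (iii) $\|\Psi'_{r_\varepsilon}\|_\infty=\|\Psi'\|_\infty$ is bounded uniformly in $\varepsilon$, so the slope perturbation $2\varepsilon\Psi'_{r_\varepsilon}(\bar s_\varepsilon)\to0$ automatically. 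Once you swap your quadratic for any penalty that vanishes identically on a half-neighborhood of $c_1$ and has uniformly bounded derivative, your argument goes through verbatim.
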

	 Note that no weak continuity condition is needed as we do not claim the finiteness of $\underline{p}_+$. The proof of Lemma \ref{cor:supersolr} is also included in Appendix \ref{corsubsup}.
	\begin{remark}\label{rem: supsoll}
		Suppose, in addition, $\overline\rho$ is a viscosity supersolution of 
		$\min\{\rho,\rho + H_-(s,\rho')\}=0$ in $\{s \in U:~ s<c_1\}.$ Let $-p_- = \varphi'(c_1-)$ and 
		\begin{equation}\label{eq:crit_barp2'r}
			\underline{p}_-:=\sup\left\{\underline p \in\mathbb{R}:~ \exists r>0,~ \varphi(s) - \underline p (s-c_1) \leq \overline \rho(s) ~\text{ for } -r<s-c_1\leq0\right\}.
		\end{equation} If $\underline{p}_- <+\infty$, then
		$$
		\overline\rho(c_1) + H(c_1-,-p_- - \underline{p}_-) \geq 0 \quad \text{ and }\quad \underline{p}_- \geq 0.
		$$ 
	\end{remark}

	\section{Proof of Main Results}
\label{sec:proof}

 \begin{lemma}\label{lem:spread1}
  Let $u$ be a nonnegative, nontrivial solution of \eqref{eq:1.1} and assume $\inf g>  \delta_0>0$. Then for any $\eta\in(0,2\sqrt{\delta_0})$
		\begin{equation}\label{eq:spread1'}
			\liminf_{t\to\infty} \inf_{|x|<(2\sqrt{\delta_0} - \eta)t} u(t,x) \geq \delta_0. 
		\end{equation}
	\end{lemma}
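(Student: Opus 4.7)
The plan is to exploit the uniform lower bound $g(\cdot) \geq \inf g > \delta_0$ to bound $u$ from below by a solution of the classical homogeneous Fisher--KPP equation with reaction $v(\delta_0 - v)$, and then invoke the Aronson--Weinberger inner spreading theorem for that benchmark problem. Since the homogeneous KPP equation with reaction $v(\delta_0 - v)$ has spreading speed $2\sqrt{\delta_0}$, this is precisely the right object to compare against.

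Concretely, I would proceed as follows. First, because $g(y) \geq \delta_0$ for all $y$ and $u \geq 0$, we have $u(g(x-c_1 t) - u) \geq u(\delta_0 - u)$, so $u$ is a classical supersolution of
\begin{equation*}
v_t - v_{xx} = v(\delta_0 - v) \quad \text{in } (0,\infty) \times \mathbb{R}.
\end{equation*}
Next, because $u$ is nontrivial and nonnegative, the strong maximum principle (applied to the linear equation with bounded coefficient $g(x-c_1 t) - u$) gives $u(1,x) > 0$ for every $x\in\mathbb{R}$. By continuity we can then choose a compactly supported, nonnegative, nontrivial initial datum $v_0$ with $v_0 \leq u(1,\cdot)$. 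Let $v$ be the solution of the homogeneous problem with initial datum $v_0$. The classical KPP result (Aronson--Weinberger) asserts that for every $\eta \in (0,2\sqrt{\delta_0})$,
\begin{equation*}
\lim_{t\to\infty} \sup_{|x|\leq (2\sqrt{\delta_0} - \eta)t} |v(t,x) - \delta_0| = 0.
\end{equation*}
By the parabolic comparison principle, $u(t+1,x) \geq v(t,x)$ for all $(t,x) \in [0,\infty)\times \mathbb{R}$. To pass from $t$ to $t+1$ in the spreading window, replace $\eta$ by $\eta/2$ in the benchmark estimate: the region $\{|x| < (2\sqrt{\delta_0} - \eta)t\}$ is eventually contained in $\{|x| \leq (2\sqrt{\delta_0} - \eta/2)(t-1)\}$ for $t$ large, which yields the desired $\liminf \geq \delta_0$.

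I do not foresee a serious obstacle here: the only subtlety is the small time-shift between the KPP inner spreading window and the one stated in the lemma, which is absorbed by slightly decreasing $\eta$. The proof uses only the pointwise bound $g \geq \delta_0$ and the strong maximum principle, and in particular requires neither the existence of $g(\pm\infty)$ nor any structural information about $\Lambda_1$ or $c_1$.
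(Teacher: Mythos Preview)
Your proof is correct and follows essentially the same comparison-with-homogeneous-KPP strategy as the paper. The paper's version is slightly terser: it picks $\delta_1\in(\delta_0,\inf g)$ and compares $u$ directly with the solution $\tilde u$ of $\tilde u_t-\tilde u_{xx}=\tilde u(\delta_1-\tilde u)$ having the \emph{same} compactly supported initial data $u_0$, which sidesteps your time-shift bookkeeping (and the intermediate $\delta_1$ absorbs any slack). Your route via the strong maximum principle at $t=1$ and a shifted sub-initial datum is equally valid and has the minor advantage of not needing $u_0$ itself to be compactly supported.
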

	\begin{proof}
		Choose $\delta_1 \in (\delta_0,\inf g)$, then
		$u$ is a subsolution to
		$$
		\tilde{u}_t - \tilde{u}_{xx} = \tilde{u}(\delta_1 - \tilde{u}) \quad \text{ in }(0,\infty)\times\mathbb{R}
		$$
		with compactly supported initial data $u_0$. It is a classical result that \eqref{eq:spread1'} holds for $\tilde u$. It follows by comparison principle that \eqref{eq:spread1'} holds also for $u$.
	\end{proof}
 
	Fix a solution $u(t,x)$ of \eqref{eq:1.1}, and let
	$$
	u^\ep(t,x) = u\left(\frac{t}{\ep},\frac{x}{\ep} \right),\quad \text{ and }\quad w^\ep(t,x) = -\ep \log u^\ep(t,x), 
	$$
	then $w^\ep$ satisfies
	\begin{equation}\label{eq:wepr}
		\begin{cases}
			w^\ep_t - \ep w^\ep_{xx} + |w^\ep_x|^2 + g\left( \frac{x- c_1 t}{\ep}\right) - e^{-w^\ep/\ep} = 0 &\text{ for }(t,x) \in (0,\infty)\times \mathbb{R},\\
			w^\ep(0,x) =\begin{cases}
				-\ep \log u_0(x/\ep) &\text{ if } x/\ep \in {\rm Int}({\rm supp}\,u_0),\\
				+\infty &\text{ otherwise. }
			\end{cases} 
		\end{cases}
	\end{equation}
	Consider the half-relaxed limits \cite{Barles1990comparison}:
	\begin{equation}\label{eq:wstars}
		w^*(t,x) = \limsup_{ \ep \to 0 \atop (t',x') \to (t,x) }w^\ep(t,x) \quad \text{ and }\quad w_*(t,x) = \liminf_{ \ep \to 0 \atop (t',x') \to (t,x) }w^\ep(t,x) 
	\end{equation}
	\begin{lemma}\label{lem:3.1}
		Let $w^*$ and $w_*$ be given as above. Then $w^*(t,x) = t\rho^*(x/t)$ and $w_*(t,x)=t\rho_*(x/t)$, for some upper semicontinuous function $\rho^*$ and lower semicontinuous function $\rho_*$.
	\end{lemma}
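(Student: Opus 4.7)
The plan is to imitate the 1-homogeneity argument of Lemma \ref{lem:1hom}, adapted to half-relaxed (rather than classical) limits, and then read off the semicontinuity from the standard properties of the Barles--Perthame construction.

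First I would verify the scaling identity: for any $A>0$, direct substitution gives
\[
w^\ep(At, Ax) = -\ep \log u\!\left(\tfrac{At}{\ep},\tfrac{Ax}{\ep}\right) = A\cdot\Bigl(-\tfrac{\ep}{A}\log u\bigl(\tfrac{t}{\ep/A},\tfrac{x}{\ep/A}\bigr)\Bigr) = A\,w^{\ep/A}(t,x).
\]
In particular $w^\ep$ itself is not homogeneous in $(t,x)$, but the family $\{w^\ep\}_{\ep>0}$ is closed under the scaling that sends $(t,x,\ep)$ to $(At,Ax,\ep/A)$, which is enough.

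Next I would push this through the limsup defining $w^*$. Fix $(t,x)$ with $t>0$ and $A>0$. Using the substitution $t' = At''$, $x' = Ax''$, $\delta = \ep/A$, we have $(t',x')\to (At,Ax)$ and $\ep\to 0^+$ if and only if $(t'',x'')\to (t,x)$ and $\delta\to 0^+$. Therefore
\[
w^*(At,Ax) = \limsup_{\ep\to 0^+,\,(t',x')\to(At,Ax)} w^\ep(t',x') = \limsup_{\delta\to 0^+,\,(t'',x'')\to(t,x)} A\,w^{\delta}(t'',x'') = A\,w^*(t,x).
\]
The identical argument, with $\liminf$ in place of $\limsup$, yields $w_*(At,Ax)=Aw_*(t,x)$. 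Defining
\[
\rho^*(s):=w^*(1,s), \qquad \rho_*(s):=w_*(1,s),
\]
the homogeneity gives $w^*(t,x)=t\rho^*(x/t)$ and $w_*(t,x)=t\rho_*(x/t)$ for all $t>0$, by taking $A=t$ in the scaling relation above.

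Finally, semicontinuity is automatic: by construction the upper half-relaxed limit $w^*$ is upper semicontinuous on its domain of definition, and $w_*$ is lower semicontinuous (this is a standard property of the Barles--Perthame relaxed limits, see \cite{BarlesPerthame87}). Restriction to the slice $\{t=1\}$ preserves these one-sided continuity properties, so $\rho^*$ is upper semicontinuous and $\rho_*$ is lower semicontinuous.

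I do not anticipate a serious technical obstacle; the only care needed is to restrict attention to $t>0$, so that the change of variable $\delta=\ep/A$ is a genuine bijection from neighbourhoods of $0^+$ to neighbourhoods of $0^+$ and the scaling $(t,x)\mapsto (At,Ax)$ does not push us onto the singular initial slice where $w^\ep(0,\cdot)$ is $+\infty$ off the rescaled support of $u_0$. For the statement of the lemma this is harmless since $\rho^*$ and $\rho_*$ are defined on $[0,\infty)$ via the slice $t=1$.
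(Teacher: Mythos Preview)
Your proof is correct and follows essentially the same approach as the paper: the paper's own proof simply says ``The existence of $\rho^*$ and $\rho_*$ is similar to Lemma~\ref{lem:1hom} and is omitted. The semicontinuity are due to the half-relaxed limits in the definition of $w^*,w_*$,'' and you have supplied exactly those omitted details, including the key observation that the scaling $w^\ep(At,Ax)=A\,w^{\ep/A}(t,x)$ passes through the half-relaxed limits via the change of variables $\delta=\ep/A$.
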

	\begin{proof}
		The existence of $\rho^*$ and $\rho_*$ is similar to Lemma \ref{lem:1hom} and is omitted. The semicontinuity are due to the half-relaxed limits in the definition of $w^*,w_*$.
	\end{proof}
	\begin{lemma}\label{lem:5.2}
		$\rho^*(s) \geq \rho_*(s) \geq 0$ for all $s\geq 0$ and $\rho^*(0)= 0$. Moreover, $\rho_*(s)/s\to +\infty$ as $s\to+\infty$.
	\end{lemma}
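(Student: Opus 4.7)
The plan is to prove the three claims separately. First, $\rho^*(s) \geq \rho_*(s)$ is immediate from the definitions of the half-relaxed limits in \eqref{eq:wstars}, since $\limsup \geq \liminf$. For the nonnegativity $\rho_* \geq 0$, I would establish the uniform $L^\infty$ bound $0 \leq u \leq M$ with $M := \max\{\|u_0\|_\infty, \sup g\}$: the constant $M$ is a supersolution of \eqref{eq:1.1} because $g \leq M$ gives $M(g-M) \leq 0$, so the parabolic comparison principle applies. Consequently $w^\epsilon = -\epsilon \log u^\epsilon \geq -\epsilon \log M \to 0$, and passing to the lower half-relaxed limit gives $\rho_*(s) = w_*(1,s) \geq 0$.

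For $\rho^*(0) = 0$, I would invoke Lemma \ref{lem:spread1} with $\delta_0 := \inf g > 0$, which is positive because $g$ is continuous, strictly positive, and has positive limits at $\pm\infty$. Fix any $t > 0$ and consider $(t',x')$ in a small neighborhood of $(t,0)$: the condition $|x'/\epsilon| < (2\sqrt{\delta_0} - \eta)(t'/\epsilon)$ reduces to $|x'| < (2\sqrt{\delta_0} - \eta)t'$, which holds uniformly for $\epsilon$ small enough. Lemma \ref{lem:spread1} then yields $u^\epsilon(t',x') \geq \delta_0$, hence $w^\epsilon(t',x') \leq -\epsilon \log \delta_0 \to 0$. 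Taking the $\limsup$ gives $w^*(t,0) \leq 0$, and combined with $w^*(t,0) \geq w_*(t,0) \geq 0$ from the previous step we conclude $w^*(t,0) = 0$. The 1-homogeneity from Lemma \ref{lem:3.1} then yields $\rho^*(0) = w^*(1,0) = 0$.

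For the superlinear growth $\rho_*(s)/s \to +\infty$, I would exploit the compact support of $u_0$ through a Gaussian upper bound. Let $\overline{G} := \sup g < \infty$ and choose $K$ with $\mathrm{supp}(u_0) \subset [-K,K]$. Then $\tilde u(t,x) := e^{-\overline{G} t} u(t,x)$ satisfies $\tilde u_t - \tilde u_{xx} \leq 0$, so the heat-kernel representation gives
$$
u(t,x) \leq \frac{C\, e^{\overline{G} t}}{\sqrt{t}} \exp\!\left(-\frac{(|x|-K)_+^2}{4t}\right) \qquad \text{for all } t>0,~x\in\mathbb{R},
$$
for some constant $C=C(\|u_0\|_\infty,K)$. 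Rescaling to $u^\epsilon(t,x) = u(t/\epsilon, x/\epsilon)$ and applying $-\epsilon \log$ produces
$$
w^\epsilon(t,x) \geq -\epsilon \log\!\left(C\sqrt{\epsilon/t}\right) - \overline{G} t + \frac{(|x|-K\epsilon)_+^2}{4t},
$$
so that letting $\epsilon \to 0$ yields $w_*(t,x) \geq -\overline{G} t + x^2/(4t)$. Evaluating at $t=1$ gives $\rho_*(s) \geq s^2/4 - \overline{G}$, which grows superlinearly as $s \to +\infty$.

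All three steps are essentially standard large-deviations computations and no serious obstacle is anticipated. The only point requiring care is in the third step, where the $\epsilon$-dependence must be tracked so that the exponential prefactor $e^{\overline{G} t/\epsilon}$ contributes precisely the finite term $\overline{G} t$ after applying $-\epsilon \log$, while the $\sqrt{\epsilon/t}$ prefactor and the shift $K\epsilon$ inside the Gaussian exponent both disappear in the limit.
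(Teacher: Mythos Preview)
Your proposal is correct. Steps 1 and 2 match the paper's proof essentially verbatim: the paper uses $u\le M_0:=\max\{\sup|u_0|,\sup g\}$ via the maximum principle to get $w^\ep\ge -\ep\log M_0$, and Lemma~\ref{lem:spread1} to show $w^*(t,0)\le 0$. One minor point: Lemma~\ref{lem:spread1} is stated with the strict inequality $\inf g>\delta_0>0$, so you should take $\delta_0$ slightly below $\inf g$ rather than equal to it, but this is cosmetic.

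Step 3 differs from the paper. The paper does not compute a Gaussian bound; instead it cites \cite[Lemma~B.3]{Lam2022asymptotic} to assert $w_*(0,x)=+\infty$ for all $x>0$, and then uses lower semicontinuity of $w_*$ together with the homogeneity relation $\rho_*(s)/s=w_*(1/s,1)$ to conclude $\liminf_{s\to\infty}\rho_*(s)/s\ge w_*(0,1)=+\infty$. Your heat-kernel argument is a self-contained and more quantitative alternative: it produces the explicit lower bound $\rho_*(s)\ge s^2/4-\overline G$, which is stronger than mere superlinearity and avoids the external citation. Both approaches are standard; yours has the advantage of being entirely internal to the present paper.
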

	\begin{proof}
		By maximum principle, one can establish uniform upper bound of $u$, i.e. $u(t,x) \leq M_0:=\max \{\sup |u_0|, \sup g\}$, so that 
		$w^\ep(t,x) \geq -\ep \log M_0.$ This implies $w^*\geq w_* \geq 0$ and hence $\rho^* \geq \rho_* \geq 0.$

	To show $\rho^*(0)=0$, it suffices to prove $w^*(t,0) = 0$ for all $t>0$. By Lemma \ref{lem:spread1}, we have
		$$
		w^*(t,0) = \limsup_{\ep \to 0\atop (t',x') \to (t,0)} w^\ep(t',x') \leq \limsup_{\ep \to 0\atop (t',x') \to (t,0)} -\ep \log u\left( \frac{t'}{\ep}. \frac{x'}{\ep}\right) \leq -\lim_{\ep \to 0} \ep \log \delta_0 =0.
		$$
		Finally,	by similar argument to that in \cite[Lemma B.3]{Lam2022asymptotic}, we have $w_*(0,x)=+\infty$ for all $x>0$. It then follows from lower semicontinuity of $w_*$ that
		$$\liminf_{s\to+\infty}\frac{\rho_*(s)}{s}=\liminf_{s\to+\infty}w_*\left(\frac{1}{s},1\right)\ge w_*(0,1)=+\infty.$$
		This completes the proof.   
	\end{proof}
	
	\subsection{Verification of flux-limited solutions property
	}
	The main result of this subection is the following.
	\begin{proposition}
		\label{prop:5.3} 
		Let $\rho^*$ and $\rho_*$ be given in Lemma \ref{lem:3.1}. Then $\rho^*$ (resp. $\rho_*$) is a FL-subsolution (resp. FL-supersolution) of \eqref{eq:fl} with {\it flux limiter} given by \eqref{eq:A}, that is,
		$A = \Lambda_1 - \frac{|c_1|^2}{4}.$ 
	\end{proposition}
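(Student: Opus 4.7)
My proof would split by the location of the contact point: (i) away from $s = c_1$, where the limiting Hamiltonian is continuous; (ii) at $s = c_1$, for the $H^\pm$ components of the junction function $F_A$; and (iii) at $s = c_1$, for the flux limiter $A$ itself. For (i), on any compact subset of $\{s \neq c_1\}$ the coefficient $g((x - c_1 t)/\epsilon)$ in \eqref{eq:wepr} converges locally uniformly to the constant $g(\pm \infty)$, so the Hamilton--Jacobi equation for $w^\epsilon$ has a continuous limit. The standard Barles--Perthame stability of viscosity solutions under half-relaxed limits, combined with the handling of the penalization $-e^{-w^\epsilon/\epsilon}$ as in \cite{Lam2022asymptotic}, then shows that $\rho^*$ and $\rho_*$ are respectively viscosity sub- and supersolutions of $\min\{\rho, \rho + H(s,\rho')\} = 0$ on $(0,\infty) \setminus \{c_1\}$.

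\textbf{The $H^\pm$ components at $c_1$.} Since $H$ is convex and coercive in $p$, Lemma \ref{cor:im1rho'} gives local Lipschitz continuity of $\rho^*$ and hence the weak continuity \eqref{eq:weakcontrho} at $c_1$. For any test function $\psi \in C^1_{pw}$ touching $\rho^*$ from above at $c_1$ in the case $\rho^*(c_1) > 0$, Lemma \ref{cor:subsolr} produces a critical slope $\bar p_+ \in (-\infty, 0]$ such that $\rho^*(c_1) + H(c_1+, \psi'(c_1+) + \bar p_+) \leq 0$; since $\bar p_+ \leq 0$, a direct case check yields $H^-(c_1+, \psi'(c_1+)) \leq H(c_1+, \psi'(c_1+) + \bar p_+)$, and hence $\rho^*(c_1) + H^-(c_1+, \psi'(c_1+)) \leq 0$. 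Remark \ref{rem:subsoll} supplies the symmetric bound involving $H^+(c_1-, \psi'(c_1-))$.

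\textbf{The flux limiter at $c_1$.} For the supersolution bound, I would build the global classical supersolution
\[
\bar u(t, x) := K\, \Phi_1(x - c_1 t)\, \exp\!\left( -\tfrac{c_1}{2}(x - c_1 t) + A\,t \right),
\qquad A = \Lambda_1 - \tfrac{c_1^2}{4},
\]
using the principal eigenfunction $\Phi_1$ from Proposition \ref{prop:2.1}. A direct computation using $\Phi_1'' + g \Phi_1 = \Lambda_1 \Phi_1$ yields $\bar u_t - \bar u_{xx} = g(x - c_1 t)\, \bar u \geq \bar u(g - \bar u)$, so $\bar u$ is a global supersolution of \eqref{eq:1.1}. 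Choosing $K$ large enough that $\bar u(0, \cdot) \geq u_0$, the parabolic comparison principle gives $u \leq \bar u$, and evaluating at $x = c_1 t$ yields $w_*(t, c_1 t) \geq -At$, i.e., $\rho_*(c_1) + A \geq 0$; since $F_A \geq A$ this already gives $\rho_*(c_1) + F_A \geq 0$. For the subsolution bound (when $\rho^*(c_1) > 0$), I would work in the moving frame $y = x - c_1 t$ using the principal Dirichlet eigenfunction $\phi_R > 0$ of $\phi_R'' + c_1 \phi_R' + g \phi_R = A_R \phi_R$ on $(-R, R)$, which after the similarity transform $\phi_R = e^{-c_1 y/2} \chi_R$ becomes self-adjoint and yields $A_R \nearrow A$ as $R \to \infty$. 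Setting $\tilde v(t, y) := \delta\, \phi_R(y)\, e^{(A_R - \epsilon_1)(t - t_0)}$, one computes $\tilde v_t - c_1 \tilde v_y - \tilde v_{yy} = (g - \epsilon_1)\, \tilde v$, so $\tilde v$ is a subsolution of the nonlinear problem in the moving frame so long as $\tilde v \leq \epsilon_1$. By the strong maximum principle applied to \eqref{eq:1.1}, $u(t_0, \cdot) > 0$ on $[c_1 t_0 - R, c_1 t_0 + R]$ for any $t_0 > 0$, so $\delta$ may be chosen small enough that $\tilde v(t_0, \cdot) \leq u(t_0, \cdot + c_1 t_0)$ on $[-R, R]$. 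Parabolic comparison then yields $u(t, c_1 t) \geq \delta \phi_R(0) e^{(A_R - \epsilon_1)(t - t_0)}$ while the subsolution regime persists; after saturation, which is possible only when $A > 0$, $u(t, c_1 t)$ remains bounded below by the positive constant $\epsilon_1 \phi_R(0)$. In either regime, the resulting upper bound on $w^\epsilon(t, c_1 t)$ gives $\rho^*(c_1) \leq -(A_R - \epsilon_1)$, and sending $R \to \infty$ then $\epsilon_1 \to 0$ yields $\rho^*(c_1) + A \leq 0$.

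\textbf{Main obstacle.} The principal difficulty is the subsolution construction above: the KPP term $-u^2$ has the ``wrong sign'' for a naive eigenfunction-based subsolution, forcing the saturation device $\tilde v \leq \epsilon_1$. The fortunate dichotomy is that when $A > 0$ the eventual positive lower bound on $u(t, c_1 t)$ forces $\rho^*(c_1) = 0$, making the flux-limiter inequality vacuous; when $A \leq 0$, the factor $e^{(A_R - \epsilon_1) t}$ decays and no saturation ever occurs. A secondary technicality is the borderline case $\Lambda_1 = \max\{g(\pm \infty)\}$, where $\Phi_1$ need not decay exponentially; this can be handled by first perturbing $g$ to $g_\eta$ via Proposition \ref{prop:2.1}(c) (so that $\Lambda_1(g_\eta) > \max\{g(\pm \infty)\}$ and the eigenfunction decays) and then sending $\eta \to 0$ at the end.
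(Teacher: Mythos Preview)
Your plan for parts (i) and (ii) is correct and essentially matches the paper: away from $c_1$ the Barles--Perthame stability applies, and at $c_1$ the critical slope lemmas (Lemma~\ref{cor:subsolr} and Remark~\ref{rem:subsoll}) reduce the $H^{\pm}$ components of $F_A$ to the interior equation.

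Your FL-\emph{supersolution} argument at $c_1$ is correct and in fact cleaner than the paper's. The global barrier $\bar u(t,x)=K\,\Phi_1(x-c_1t)\,e^{-c_1(x-c_1t)/2+At}$ is an exact solution of the linearized problem and hence a supersolution of \eqref{eq:1.1}; since it is defined on all of $\mathbb{R}$, the inequality $u\le\bar u$ holds for \emph{every} $(t,x)$, and one can take the half-relaxed liminf directly to obtain $\rho_*(c_1)+A\ge 0$. Because $F_A\ge A$, this single inequality already forces the full junction condition, bypassing any use of critical slopes on the supersolution side. The paper instead argues by contradiction, constructing a local \emph{subsolution} $\varphi^{\epsilon,\eta}$ of the $w^\epsilon$-equation in a box $Q_r$ with boundary data inherited from a touching test function. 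Your route is more economical here.

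The FL-\emph{subsolution} argument at $c_1$, however, has a genuine gap. Your Dirichlet-eigenfunction subsolution $\tilde v(t,y)=\delta\,\phi_R(y)\,e^{(A_R-\epsilon_1)(t-t_0)}$ lives on $|y|\le R$ in the moving frame, so after rescaling it controls $u(t'/\epsilon,x'/\epsilon)$ only when $|x'-c_1t'|\le R\epsilon$. This is an $O(\epsilon)$-thin strip in $(t,x)$-coordinates. Your bound on $w^\epsilon(t,c_1t)$ therefore bounds the \emph{pointwise} limsup $\limsup_{\epsilon\to0}w^\epsilon(1,c_1)$, but $\rho^*(c_1)=w^*(1,c_1)$ is the \emph{half-relaxed} limsup over all sequences $(t',x')\to(1,c_1)$, and such sequences need not remain in that strip. (A global version of your subsolution would require $\Phi_1(y)e^{-c_1y/2}$ to be bounded, which fails when $\mu_-=\sqrt{\Lambda_1-g(-\infty)}-c_1/2<0$, so the saturation device $\tilde v\le\epsilon_1$ cannot be made global.)

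The paper closes this gap by a different mechanism: it assumes for contradiction that $\rho^*(c_1)+A>0$, and then uses the critical slopes from Lemma~\ref{cor:subsolr} not only to verify the $H^\pm$ inequalities but to show that the piecewise-linear function $-\lambda+\phi_0(s-c_1)$ (built from the exponents $\mu_\pm$ of $\Phi_1$) is itself a test function touching $\rho^*$ from above at $c_1$. Passing to $(t,x)$-variables, this yields strict separation $w^*<\varphi^0$ on $\partial Q_r$ for a \emph{full} box $Q_r\ni(1,c_1)$. The corresponding $\varphi^\epsilon$ (built from $\Phi_1$) is then a supersolution of the $w^\epsilon$-equation on $Q_r$, and comparison in this two-dimensional box---not a thin strip---controls $w^\epsilon$ along any sequence realizing the half-relaxed limsup, producing the contradiction. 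The missing ingredient in your approach is precisely this conversion of the contradictory hypothesis into a boundary condition on a full neighborhood.
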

	
	We divide the proof of Proposition \ref{prop:5.3} into the verification of FL-subsolution and supersolution.
	
	\begin{lemma} \label{lem:flsub}
		Let $\rho^*$ be given by Lemma \ref{lem:3.1}. Then
		\begin{itemize}
			\item[{\rm(a)}] $\rho^* \in {\rm Lip}_{loc}([0,\infty))$;  
			\item[{\rm(b)}] $\rho^*$ satisfies the weak continuity condition \eqref{eq:weakcontrho};
			\item[{\rm(c)}] $\rho^*$ is a FL-subsolution of \eqref{eq:fl} with $A=\Lambda_1-\frac{c_1^2}{4}$.
		\end{itemize}
	\end{lemma}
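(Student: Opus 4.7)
The plan is to prove the interior viscosity subsolution content of (c) first via standard half-relaxed limits, deduce (a) and (b) via Lemma \ref{cor:im1rho'}, and finally verify the junction condition at $c_1$, which is the hard part. For a test function $\psi$ such that $\rho^* - \psi$ attains a strict local maximum at some $s_0 \in (0,\infty) \setminus \{c_1\}$ with $\rho^*(s_0) > 0$, I would lift to space-time by setting $\Psi(t,x) := t\psi(x/t)$, locate a sequence of near-maxima $(t_\epsilon, x_\epsilon)$ of $w^\epsilon - \Psi - |(t,x)-(1,s_0)|^2$ converging to $(1, s_0)$, and pass to the limit in \eqref{eq:wepr}. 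Since $s_0 \neq c_1$, the argument $(x_\epsilon - c_1 t_\epsilon)/\epsilon$ diverges to $\pm\infty$, so $g((x_\epsilon - c_1 t_\epsilon)/\epsilon) \to g(\pm\infty)$; the term $e^{-w^\epsilon/\epsilon}$ vanishes because $w^\epsilon(t_\epsilon,x_\epsilon) \to \rho^*(s_0) > 0$. This yields $\rho^*(s_0) + H(s_0, \psi'(s_0)) \leq 0$. The coercivity of $p \mapsto H(s,p) = -sp + p^2 + R(s)$ (valid since $\inf R \geq \inf g > 0$) then allows me to invoke Lemma \ref{cor:im1rho'} to conclude (a) $\rho^* \in \mathrm{Lip}_{loc}([0,\infty))$, and (b) is immediate from continuity at $c_1$.

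For the junction condition in (c) at $s = c_1$, I may assume $\rho^*(c_1) > 0$ (otherwise the min-form is trivially satisfied) and take a test function $\varphi \in C^1_{pw}$ touching $\rho^*$ from above only at $c_1$. Applying Lemma \ref{cor:subsolr} on the right side and Remark \ref{rem:subsoll} on the left side produces critical slopes $\overline{p}_\pm \in (-\infty, 0]$ satisfying
\[
\rho^*(c_1) + H(c_1+, \varphi'(c_1+) + \overline{p}_+) \leq 0, \qquad \rho^*(c_1) + H(c_1-, -\varphi'(c_1-) - \overline{p}_-) \leq 0.
\]
Since $\overline{p}_\pm \leq 0$ and $H^-(c_1+, \cdot)$ (resp.\ $H^+(c_1-, \cdot)$) is the nonincreasing (resp.\ nondecreasing) envelope of $H(c_1\pm, \cdot)$, a case analysis on whether $\varphi'(c_1+) \lessgtr c_1/2$ upgrades these to $\rho^*(c_1) + H^-(c_1+, \varphi'(c_1+)) \leq 0$ and $\rho^*(c_1) + H^+(c_1-, \varphi'(c_1-)) \leq 0$.

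The remaining and most delicate step, which is the principal new content of the lemma, is the flux-limiter bound $\rho^*(c_1) + A \leq 0$ with $A = \Lambda_1 - c_1^2/4$. For this I would construct, for each small $K > 0$, a genuine subsolution of \eqref{eq:1.1} from the principal eigenfunction $\Phi_1$ of \eqref{eq:pev0}:
\[
\underline{u}(t,x) := \gamma\, \Phi_1(x - c_1 t)\, e^{-(c_1/2)(x - c_1 t) + (A - K)(t - T_0)}.
\]
The identity $\Phi_1'' = (\Lambda_1 - g)\Phi_1$ gives $\underline{u}_t - \underline{u}_{xx} = (g - K)\underline{u}$, so $\underline{u}$ is a subsolution of \eqref{eq:1.1} exactly where $\underline{u} \leq K$. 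Restricting to a strip $|x - c_1 t| \leq L$ (on which $\Phi_1(y) e^{-(c_1/2) y}$ is bounded) and using Lemma \ref{lem:spread1} to choose $T_0$ large so that $u(T_0, \cdot) \geq \delta_0$ on this strip, I would choose $\gamma, L$ so that $\underline{u}(T_0, \cdot) \leq \min\{K, u(T_0, \cdot)\}$ on the strip, then compare on the strip for $t \geq T_0$ via a Dirichlet-type barrier on the lateral boundary $|x - c_1 t| = L$. Evaluating at $x = c_1 t$ and rescaling gives
\[
w^\epsilon(t, c_1 t) \leq -\epsilon \log(\gamma \Phi_1(0)) - (A - K)(t - \epsilon T_0),
\]
so $\rho^*(c_1) \leq K - A$; letting $K \to 0^+$ (with an additional approximation by $g_\eta$ from Proposition \ref{prop:2.1}(c) in the critical case $\Lambda_1 = \max g(\pm\infty)$, where $\Phi_1$ may fail to decay) yields $\rho^*(c_1) \leq -A$. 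The main obstacle is precisely this eigenfunction construction: because $\Phi_1(y) e^{-(c_1/2) y}$ can grow as $y \to -\infty$, the subsolution $\underline{u}$ must be strip-localized and the lateral comparison closed using the a priori lower bound on $u$ in the bulk ($y = -L$) together with the fast decay of $\underline{u}$ at the front ($y = +L$).
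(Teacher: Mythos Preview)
Your treatment of (a), (b) and the $H^\pm$ parts of (c) is correct and matches the paper: one first checks the Ishii subsolution property on $(0,\infty)\setminus\{c_1\}$ by a standard half-relaxed-limit argument, then applies Lemma~\ref{cor:im1rho'} (coercivity of $p\mapsto -sp+p^2+\inf g$) to get local Lipschitz continuity and hence weak continuity; the $H^\pm$ inequalities at the junction then follow from the critical slope Lemma~\ref{cor:subsolr} exactly as you say.

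The flux-limiter step has a genuine gap. You propose to prove $\rho^*(c_1)\le -A$ directly by comparing $u$ with
\[
\underline u(t,x)=\gamma\,\Phi_1(y)\,e^{-(c_1/2)y+(A-K)(t-T_0)},\qquad y=x-c_1t,
\]
on the moving strip $|y|\le L$, closing the lateral boundary via ``the a priori lower bound on $u$ in the bulk ($y=-L$)''. But in the only relevant regime $\rho^*(c_1)>0$ one necessarily has $c_1>2\sqrt{\inf g}$ (Lemma~\ref{lem:spread1} already gives $\rho^*\equiv 0$ on $[0,2\sqrt{\inf g})$), so for all large $t$ the lateral boundary $x=c_1t\pm L$ lies \emph{outside} the cone covered by Lemma~\ref{lem:spread1}, and there is no uniform-in-$t$ lower bound on $u$ there. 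Worse, along these rays $u$ may decay like $e^{-\rho^*(c_1)t}$, so forcing $\underline u\le u$ on $|y|=L$ for all $t\ge T_0$ would require $A-K\le -\rho^*(c_1)$, which is precisely the inequality you set out to prove --- the argument is circular. (A repair is possible: replace $\Phi_1$ by the Dirichlet principal eigenfunction $\Phi_1^L$ on $(-L,L)$, so that $\underline u\equiv 0$ on $|y|=L$ and the lateral comparison is trivial, then send $L\to\infty$ using $\Lambda_1^L\nearrow\Lambda_1$. This is not what you wrote, however.)

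The paper sidesteps the boundary issue by a different mechanism. It argues by contradiction ($A>\lambda:=-\rho^*(c_1)$), which combined with the critical slope inequalities forces $\Lambda_1>\max\{g(\pm\infty)\}$ and shows that the eigenfunction-matched tent $s\mapsto -\lambda+\phi_0(s-c_1)$ (slopes $\mu_\pm$ as in \eqref{eq:mupm}) is \emph{itself} a test function touching $\rho^*$ strictly from above at $c_1$. One then builds a \emph{supersolution} $\varphi^\ep$ of \eqref{eq:wepr} from $\Psi(y)=e^{-c_1y/2}\Phi_1(y)$ on a fixed rescaled box $Q_r\ni(1,c_1)$; the strict touching produces the boundary gap $w^*+\delta\le\varphi^0$ on $\partial Q_r$, which transfers to $w^\ep+\delta/2\le\varphi^\ep$ and yields a contradiction via the maximum principle. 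The essential point is that the boundary control comes from the test function and the critical-slope lemma, not from an a priori lower bound on $u$.
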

	\begin{proof}
		By construction $\rho^*: [0,\infty) \to [0,\infty)$ is upper semicontinuous. 
		It is standard to show that $w^*(t,x)=t\rho^*(x/t)$ is a viscosity subsolution to
		\begin{equation}\label{eq:wr}
	\begin{cases}
		\min\{w^*,w^*_t + |w^*_x|^2 + g(-\infty)\} = 0  &\text{ in }\{(t,x): 0< x < c_1t\},\\ 
		\min\{w^*,w^*_t + |w^*_x|^2 + g(+\infty)\} = 0  &\text{ in }\{(t,x): x > c_1t>0\},\\
		\min\{w^*,w^*_t + |w^*_x|^2 + \inf_{y\in\mathbb{R}} g(y)\} = 0  &\text{ in }(0,+\infty)\times(0,+\infty).
	\end{cases}
\end{equation}
From the third equation, we deduce as in \cite[Lemma 2.3]{Liu2021stacked} that, in viscosity sense, 
		\begin{equation}
			\min\{\rho^*,\rho^* -s(\rho^*)' + |(\rho^*)'|^2 + \inf_\mathbb{R} g\}\le 0 \quad \text{ in }(0,\infty).
		\end{equation}
		Since also $\rho^*(0)=0$ (thanks to Lemma \ref{lem:5.2}), we infer from Lemma \ref{cor:im1rho'} that $\rho^* \in {\rm Lip}_{loc}([0,\infty))$. This proves assertion (a). Assertion (a) implies (b).
		
		The proof of (c) is inspired by \cite{Guerand2017effective}. From the first two equations of \eqref{eq:wr}, we deduce  that, in viscosity sense, 
		$$
		\min\{\rho^*,\rho^* + H(s,(\rho^*)')\} \leq 0 \quad \text{ for }s \in (0,\infty)\setminus c_1,
		$$
		with $H(s,p)$ given in \eqref{eq:ishii3}.
		It remains to show that $\rho^*$ is a subsolution to the second equation of \eqref{eq:fl}.  For this purpose, let $\psi \in C^1_{pw}$ and suppose $\rho^* - \psi$ has a strict global maximum point\footnote{See \cite[Proposition 3.1]{Barles2013introduction} for  several equivalent definitions of viscosity solution.} at $c_1$, and that $\psi(c_1) =\rho^*(c_1) >0$.  Denote 
		$$
		A = \Lambda_1 - \frac{c_1^2}{4},\qquad \lambda = -\rho^*(c_1) = -\psi(c_1),\qquad p_+ = \psi'(c_1+),\qquad p_- = -\psi'(c_1-).
		$$
		(Note the negative sign in the definition of $p_-$.)    We want to show
		\begin{equation}\label{eq:flsub}
		-\lambda+\max\left\{A, H^-(c_1+,p_+), H^+(c_1-,-p_-) \right\} \leq 0.
		\end{equation}
  {{(Observe that if $A \geq 0$, then any nonnegative (sub)solution $\rho^*(s)$ satisfying the junction condition $\min\{\rho, \rho + F_A(\rho'(c_1+), \rho'(c_1-)) \leq 0$ must vanish at the point $c_1$, i.e. the case $\rho^*(c_1)>0$ is null.)}}
		We first claim that
		\begin{equation}\label{eq:fl2}
			H^-(c_1+,p_+) \leq \lambda, \quad \text{ and }\quad H^+(c_1-,-p_-)  \leq \lambda.
		\end{equation}
		By Lemma \ref{cor:subsolr}, 
		\begin{equation}\label{eq:critslope1}
			H(c_1+,p_+ + \bar{p}_+) \leq \lambda \quad \text{ for some }\bar{p}_+ \in (-\infty,0].
		\end{equation}
		Hence, using the fact that $H^-$ is decreasing in $p$ and $H^- \leq H$, 
		$$
		H^-(c_1+,p_+) \leq H^-(c_1+,p_+ + \bar{p}_+) \leq H(c_1+,p_+ + \bar{p}_+) \leq \lambda.
		$$
		Arguing similarly, we also have $H^+(c_1-,-p_-)  \leq \lambda$. This proves \eqref{eq:fl2}. 
		
		It remains to show
		$A\leq \lambda$, where $A = \Lambda_1 - \frac{c_1^2}{4}$.
		
		Suppose for contradiction $A > \lambda$.  Then by \eqref{eq:fl2}, we have 
		\begin{equation}\label{eq:Amin}
			A > \max\{\min_p H(c_1 \pm,\,p)\} = \max\{g(\pm \infty)\} - \frac{c_1^2}{4},
		\end{equation}
		In particular, $\Lambda_1>\max\{g(\pm\infty)\}$.
		Define
		\begin{equation}\label{eq:phi0r}
			\phi_0(x) = {\mu_+}\max\{x,0\} - {\mu_-} \min\{x,0\},
		\end{equation}
		where 
		\begin{equation}\label{eq:mu-pm}
			\mu_+ = \frac{c_1}{2}+\sqrt{\Lambda_1-g(+\infty)},\quad \text{ and }\quad \mu_- = \frac{-c_1  }{2}+\sqrt{\Lambda_1-g(-\infty)}.
		\end{equation}
		Note that $\mu_\pm$ are also determined uniquely (thanks to \eqref{eq:Amin}) by  
		\begin{equation}\label{eq:mupm}
			\begin{cases}
				H(c_1+,\mu_+) = A & \text{ and }\quad \mu_+ \geq {\rm argmin}\, H(c_1+,\cdot),\\
				H(c_1-,-\mu_-) = A&\text{ and }\quad -\mu_- \leq {\rm argmin}\, H(c_1-,\cdot).\end{cases}
		\end{equation}
		By \eqref{eq:critslope1} and that $\lambda<A=H(c_1+,{\mu_+})$, we have $H(c_1+,p_+ + \bar{p}_+) < H(c_1+,{\mu_+})$ and thus $p_+ + \bar{p}_+ < {\mu_+}$ (here we have used the  fact that ${\mu_+}$ is the larger root of $H(c_1+, p) = A$). By definition of $\bar{p}_+$ in \eqref{eq:crit_barp1r}, we deduce that there  exists a small neighborhood $(c_1-r,c_1+r)$ of $c_1$ (with $0<r<\min\{1,c_1\}$) such that
(by Lipschitz continuity of $\rho^*$) 		$$
		\frac{\rho^*(c_1)}{2}<\rho^*(s) \leq  -\lambda + \phi_0(s-c_1) \quad \text{ for }c_1 \leq s < c_1+r,
		$$
		with the second {inequality being an equality} iff $s = c_1$.
		By arguing similarly, along with the definition of $\bar p_-$ in \eqref{eq:crit_barp1'r}, we have
		$$
			\frac{\rho^*(c_1)}{2}<\rho^*(s) \leq  -\lambda + \phi_0(s-c_1) \quad \text{ for }c_1-r < s \leq c_1,
		$$
		with the second equality holds iff $s=c_1$. In other words, $-\lambda+\phi_0(s-c_1)$ is also a test function touching $\rho^*$ from above at $c_1$ in $(c_1-r,c_1+r)$. Hence, letting 
		\begin{equation}\label{def:Qr}
			Q_r:=\{(t,x): x/t \in [c_1-r, c_1+r],~ |t-1|<r\},
		\end{equation}
		we get
		$$
		w^*(t,x)=t \rho^*(x/t) \leq \varphi^0(t,x):=\frac{A-\lambda}{4}(t-1)^2 
		+ t\left(-\lambda + \phi_0\left( \frac{x-c_1t}{t}\right)\right) 
		\quad \text{ in } Q_r
		$$
		with equality iff $(t,x) = (1,c_1)$. We can then choose $\delta\in(0,(1-r)\rho^*(c_1))$ small such that
		\begin{equation}\label{eq:b12rr}
			w^*(t,x) + \delta \leq \varphi^0(t,x)  \quad \text{ on }\partial Q_r.
		\end{equation}
		Next, define 
		$$
		\varphi^\ep(t,x):= \frac{A-\lambda}{4}(t-1)^2 -t\lambda - \ep \log \Psi\left(\frac{x-c_1t}{\ep}\right) \quad \text{ for }(t,x) \in Q_r,
		$$
		where $\Psi(y) = e^{-\frac{c_1 y}{2}}\Phi_1(y)$ and $\Phi_1$ is the positive eigenfunction given in Proposition \ref{prop:2.1}. Thanks to Proposition \ref{prop:2.1}(b)(i),
		{one has} $\varphi^\ep \to \varphi^0$ in $C_{loc}$ since
		$$
		-\ep \log\left[ e^{-\frac{c_1}{2}\frac{x-c_1t}{\ep}} \Phi_1\left(\frac{x-c_1t}{\ep}\right)\right] \to \phi_0(x-c_1t) = t\phi_0\left(\frac{x-c_1t}{t}\right)\quad \text{ locally uniformly.}
		$$ 
Hence, we deduce from \eqref{eq:b12rr} that 
		\begin{equation}\label{eq:bdrycondr}
			w^\ep(t,x) + \delta/2 \leq \varphi^\ep(t,x) \quad \text{ on }\partial Q_r,
		\end{equation}
		for sufficiently small $\ep$. 
		Next, we observe that $\varphi^\ep-\delta/2$ satisfies
		\begin{align*}
			&\quad     \varphi^\ep_t - \ep \varphi^\ep_{xx} + |\varphi^\ep_x|^2  + g\left(\frac{x-c_1t}{\ep}\right) - e^{-\frac{(2\varphi^\ep-\delta)}{2\ep}}= \frac{A-\lambda}{2}(t-1)-\lambda + A + o(1) >0, 
		\end{align*}
		where we used  
		\begin{eqnarray*}
		&&\varphi^\ep(t,x)-\frac{\delta}{2} \ge t\left(-\lambda +  \phi_0\big(\frac{x}{t} - c_1\big)\right)-\frac{\delta}{2} + o(1)\\
		&&\geq (1-r)\rho^*\left(\frac{x}{t}\right)-\frac{\delta}{2}+o(1)\ge\frac{(1-r)\rho^*(c_1)}{2}+o(1)>0 \text{ for } (t,x)\in Q_r
		\end{eqnarray*} to deduce  $e^{-{\frac{2\varphi^\ep-\delta}{2\ep}}} =o(1)$ in the last equality, and $|t-1|\leq r <1$ in the strict inequality.
		Hence $\varphi^\ep-\delta/2$ is a supersolution to the equation \eqref{eq:wepr} of $w^\ep$. In view of the boundary condition \eqref{eq:bdrycondr}, the comparison principle yields
		\begin{equation}\label{eq:1.33rr}
			w^\ep(t,x) + \delta/2 \leq \varphi^\ep(t,x) = 
   \varphi^0(t,x)
   + o(1) \quad \text{ in }Q_r.
		\end{equation}
		By definition of $w^*(1,c_1)=\rho^* (c_1) = -\lambda$ (recall that $\psi$ touches $\rho^*$ from above), there exists $(t^\ep,x^\ep) \to (1,c_1)$ such that $w^\ep(t^\ep,x^\ep) \to -\lambda$. Substituting $(t,x) = (t^\ep,x^\ep)$ into \eqref{eq:1.33rr} and letting $\ep \to 0$, we have
		$$
		-\lambda+ \delta/2 \leq \varphi^0(1,c_1) = -\lambda \quad \text{ for some }\delta>0,
		$$
		which leads to  a contradiction. Therefore, $A \leq \lambda$. This concludes the proof.
	\end{proof}

	Next, we show the FL-supersolution property of {$\rho_*$}.
	\begin{lemma}\label{lem:flsupp}
		The lower limit $\rho_*$ is a FL-supersolution of \eqref{eq:fl} with $A = \Lambda_1 - \frac{c_1^2}{4}$. 
	\end{lemma}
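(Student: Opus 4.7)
My plan is to mirror the barrier argument of Lemma \ref{lem:flsub}, with the upper barrier replaced by a lower one built from the principal eigenfunction. The lower semicontinuity of $\rho_*$ and the nonnegativity $\rho_* \geq 0$ follow at once from Lemmas \ref{lem:3.1} and \ref{lem:5.2}. The interior supersolution property at $s_0 \neq c_1$ is standard: passing to the lower half-relaxed limit in \eqref{eq:wepr} and noting that $e^{-w^\ep/\ep} \to 0$ wherever $w_* > 0$, the function $w_*(t,x) = t\rho_*(x/t)$ is a viscosity supersolution of $w_t + |w_x|^2 + g(\pm\infty) = 0$ on each half-plane $\{x \gtrless c_1 t\}$, which yields $\rho_*(s_0) + H(s_0, \psi'(s_0)) \geq 0$ by the reduction using $1$-homogeneity.

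The main task is the junction condition at $c_1$. Let $\psi \in C^1_{pw}$ touch $\rho_*$ strictly from below at $c_1$, and set $p_+ = \psi'(c_1+)$, $p_- = \psi'(c_1-)$, $\lambda = \rho_*(c_1) \geq 0$. I argue by contradiction, assuming $\lambda + F_A(p_+, p_-) < 0$; in particular $A + \lambda < 0$. Apply Lemma \ref{cor:supersolr} on the right and Remark \ref{rem: supsoll} on the left to obtain nonnegative critical slopes $\underline p_+, \underline p_- \in [0, +\infty]$ satisfying, whenever finite,
\[
H(c_1+, p_+ + \underline p_+) \geq -\lambda \qquad \text{and} \qquad H(c_1-, p_- - \underline p_-) \geq -\lambda.
\]
Since $H^-(c_1+, \cdot)$ is the nonincreasing envelope of $H(c_1+, \cdot)$ and coincides with it on $(-\infty, c_1/2]$, one reduces at once to $p_+ + \underline p_+ > c_1/2$: in the opposite subcase, the monotonicity of $H(c_1+, \cdot)$ on $(-\infty, c_1/2]$ would give $H^-(c_1+, p_+) = H(c_1+, p_+) \geq H(c_1+, p_+ + \underline p_+) \geq -\lambda$, contradicting the assumption. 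An analogous reduction on the left yields $p_- - \underline p_- < c_1/2$. Squaring the $H$-inequalities and using $-\lambda > A = \Lambda_1 - c_1^2/4$ then produces the strict gap bounds
\[
p_+ + \underline p_+ > \mu_+, \qquad p_- - \underline p_- < -\mu_-,
\]
with $\mu_\pm$ as in \eqref{eq:mu-pm}.

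With $\Psi(y) = e^{-c_1 y/2}\Phi_1(y)$ and a small $\delta > 0$, define
\[
\varphi^\ep(t,x) := t\lambda - \tfrac{|A+\lambda|}{4}(t-1)^2 - \ep \log \Psi\!\left(\tfrac{x - c_1 t}{\ep}\right) + \delta
\]
on a parabolic cylinder $Q_r$ of \eqref{def:Qr}. A direct computation using $\Psi'' + c_1\Psi' + (g-A)\Psi = 0$ gives
\[
\varphi^\ep_t - \ep \varphi^\ep_{xx} + |\varphi^\ep_x|^2 + g\!\left(\tfrac{x-c_1t}{\ep}\right) - e^{-\varphi^\ep/\ep} = -\tfrac{|A+\lambda|}{2}(t+1) - e^{-\varphi^\ep/\ep} \leq 0 \quad \text{in } Q_r,
\]
so $\varphi^\ep$ is a viscosity subsolution of \eqref{eq:wepr}. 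By Proposition \ref{prop:2.1}(b)(i), $-\ep\log\Psi((x-c_1t)/\ep) \to \phi_0(x-c_1 t)$ locally uniformly, with $\phi_0$ from \eqref{eq:phi0r}. The strict gap bounds, combined with the local lower estimate $\rho_*(c_1 + \theta) \geq \lambda + (p_+ + \underline p)\theta + o(\theta)$ for any fixed $\underline p < \underline p_+$ (and analogously on the left), allow choosing $r$ small and then $\delta \ll r^2$ so that $\varphi^\ep \leq w^\ep$ on $\partial Q_r$ for all sufficiently small $\ep$. By the comparison principle, $\varphi^\ep \leq w^\ep$ in $Q_r$; evaluating along $(t^\ep, x^\ep) \to (1, c_1)$ with $w^\ep(t^\ep, x^\ep) \to \lambda$ gives $\lambda + \delta \leq \lambda$, the desired contradiction.

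The main obstacle is the bookkeeping on $\partial Q_r$: the inequality $\varphi^\ep \leq w^\ep$ on the top/bottom slabs demands $\delta \lesssim r^2$, while on the lateral faces it demands $\delta \lesssim r(p_+ + \underline p_+ - \mu_+)$ together with the analogous bound on the left, and both must be met simultaneously. In the degenerate regime $\Lambda_1 = \max\{g(\pm\infty)\}$, where $\mu_\pm$ may fail to define a useful barrier (e.g.\ $\mu_- < 0$ when $\Lambda_1 = g(-\infty) < c_1^2/4$), I would first replace $g$ by the approximation $g_\eta$ from Proposition \ref{prop:2.1}(c), satisfying $\Lambda_1(g_\eta) > \max\{g(\pm\infty)\}$, run the barrier argument for $g_\eta$, and then pass to the limit $\eta \to 0$. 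The case $\underline p_+ = +\infty$ (or $\underline p_- = +\infty$) is handled by the same construction with any sufficiently large fixed slope substituted for $\underline p_+$ (respectively $\underline p_-$), which makes the lateral boundary inequality easier.
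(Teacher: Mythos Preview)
Your proof is correct and follows essentially the same route as the paper's: the critical slope lemma (Lemma~\ref{cor:supersolr} and Remark~\ref{rem: supsoll}) yields the strict gap bounds $p_+ + \underline p_+ > \mu_+$ and $p_- - \underline p_- < -\mu_-$, which show that $\lambda + \phi_0(s-c_1)$ touches $\rho_*$ strictly from below at $c_1$; this is the paper's Claim~\ref{claim:6.8}, after which the barrier $\varphi^\ep$ built from the eigenfunction $\Psi$ and parabolic comparison in $Q_r$ give the contradiction exactly as in the paper. The only structural difference is that the paper threads the $g_\eta$-approximation of Proposition~\ref{prop:2.1}(c) through the entire argument (taking $\eta=0$ when $\Lambda_1>\max\{g(\pm\infty)\}$), whereas you treat the non-degenerate case first and invoke $g_\eta$ only at the end; both are valid, but note that ``run the barrier argument for $g_\eta$'' must mean using $\Phi_1^\eta$ in the barrier while keeping the original $g$ in the equation for $w^\ep$, the extra error term $g-g_\eta\le 0$ then being absorbed on the subsolution side.
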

	\begin{proof}
		Again, it is standard to check that  
		$w_*$ is a viscosity supersolution to the first two equations of \eqref{eq:wr} in the viscosity sense. This implies again, by \cite[Lemma 2.3]{Liu2021stacked} that $\rho_*$ is the viscosity supersolution of the first equation of \eqref{eq:fl}. 
		
		It remains to verify the remaining junction condition of \eqref{eq:fl}. Suppose  there is a test function 
		$\psi \in C^1_{pw}$ that touches $\rho_*$ from {below} only at $s=c_1$, and denote 
		$$
		\lambda = - \rho_*(c_1)=-\psi(c_1), \qquad p_+ = \psi'(c_1+), \qquad  p_- = -\psi'(c_1-).
		$$
		By way of contradiction, we further assume that
		\begin{equation}\label{eq:supfl2}\max\{A, H^-(c_1+,p_+), H^+(c_1-,-p_-)\}< \lambda.
		\end{equation}
  Let $A_0 =\max\{g(\pm\infty)\} - |c_1|^2/4$, 
  $\mu_\pm$ be given in terms of $A \in [A_0,\infty)$ as in \eqref{eq:mupm}, and $\phi_0$ be defined as in \eqref{eq:phi0r}.  By Proposition \ref{prop:2.1}(c), for any $\eta>0$,  there exists $g_\eta\in C(\mathbb{R},\mathbb{R})$ such that $\Lambda^\eta_1>\max\{g(\pm\infty)\}$ and $\|g-g_\eta\|_{\infty}\le \eta$. In particular, if $A>A_0$ (or say $\Lambda_1>\max\{g(\pm\infty)\}$), we could set $\eta=0$ and $g_0\equiv g$.  Let $\Psi^\eta(y)=e^{-\frac{c_1}{2}y}\Phi^\eta_1(y)$ where $\Phi^\eta_1(y)$ is a positive and bounded eigenfuction associated with  $\Lambda^\eta_1$. For any $\eta\ge 0$, we could also define $\mu^\eta_+$ and $\mu^\eta_-$  with $\Lambda_1$ replace by $\Lambda^\eta_1$ in \eqref{eq:mu-pm}. Accordingly, $\phi^\eta_0$ would be defined in a similar manner. 	We first prove the following claim.
		

		\begin{claim}\label{claim:6.8}
  There exists $\eta_0>0$, such that  $-\lambda + \phi^\eta_0(s-c_1)$ touches $\rho_*$ from below strictly at $c_1$ for any $\eta\in[0,\eta_0)$.
		\end{claim}
		
	Let $\underline{p}_\pm$ be given in \eqref{eq:crit_barp2r} and \eqref{eq:crit_barp2'r}. In the case that $0\leq \underline{p}_+<+\infty$,  we have
		$$
		H^-(c_1+,\,p_+) < \lambda \leq H(c_1+,\,p_+ + \underline{p}_+),  
		$$
		which implies $p_+ +\underline{p}_+ \geq {\rm argmin}\,H(c_1+,\cdot)$. 
		Together with 
		$$
		A <\lambda, \qquad H(c_1+,\mu_+) = A
		$$ with  ${\mu_+}$ being the larger root of $p\mapsto H(c_1+,p)- A$, we deduce that $p_+ + \underline{p}_+> {\mu_+}$. Note that $\mu^\eta_+\to \mu_+$ as $\eta\to0^+$. Therefore, $p_+ + \underline{p}_+> \mu^\eta_+$ for any sufficiently small $\eta\ge0$. This yields $\rho_*(s) > -\lambda + \phi^\eta_0(s-c_1)$ in a right neighborhood of $c_1$ (which depends on $\eta$). Moreover, this last statement is obviously valid if $\underline{p}_+=+\infty$. 
		
		In the case that $\underline{p}_-<+\infty$, we could argue similarly to get for any sufficiently small $\eta\ge0$, $\rho_*(s) > -\lambda + \phi^\eta_0(s-c_1)$ in a left neighborhood of $c_1$, which is clearly true in the case that $\underline{p}_-=+\infty$.
			As a consequence, the Claim \ref{claim:6.8} is proved.
		
		Now for fixed $\eta\in(0,\min\{\eta_0,\frac{\lambda-A}{4}\})$, there exists $r\in(0,1)$, such that $-\lambda+\phi^\eta_0(s-c_1)$ touching $\rho_*$ from below strictly at $c_1$ in $(c_1-r,c_1+r)$. Letting $Q_r$ be given in \eqref{def:Qr},
		we get
		$$
		w_*(t,x)=t \rho_*(x/t) \geq \varphi^{0,\eta}(t,x):=\frac{A-\lambda}{4}(t-1)^2 
		+ t\left(-\lambda + \phi^\eta_0\left( \frac{x-c_1t}{t}\right)\right) 
		\quad \text{ in } Q_r
		$$
		with equality holds iff $(t,x) = (1,c_1)$.  Then there exists $\delta(\eta)>0$ such that
		$$w_*(t,x)\ge \varphi^{0,\eta}(t,x)+\delta\, \text{ on }\, \partial Q_r.$$
		Define $$\varphi^{\ep,\eta}(t,x) = \frac{A-\lambda}{4}(t-1)^2 - t\lambda - \ep \log \left[e^{-\frac{c_1}{2}\cdot \frac{x-c_1 t}{\ep}} \Phi^\eta(\frac{x-c_1t}{\ep})\right], \text{ for } (t,x)\in Q_r.$$ Clearly,  $\varphi^{\ep,\eta} \to \varphi^{0,\eta}$ in $C_{loc}$ since
		$$
		-\ep \log\left[ e^{-\frac{c_1}{2}\frac{x-c_1t}{\ep}} \Phi^\eta(\frac{x-c_1t}{\ep})\right] \to \phi^\eta_0(x-c_1t) = t\phi^\eta_0(\frac{x-c_1t}{t})\quad \text{ locally uniformly.}
		$$
		Therefore, 	$$w^\epsilon(t,x)\ge \varphi^{\epsilon,\eta}(t,x)+\delta/2\, \text{ on }\, \partial Q_r.$$
		for sufficiently small $\epsilon$. Now we verify $\varphi^{\epsilon,\eta}+\delta/2$
		  is a subsolution of \eqref{eq:wepr}.  Indeed, 	\begin{align*}
		  	&\quad     \varphi^{\ep,\eta}_t - \ep \varphi^{\ep,\eta}_{xx} + |\varphi^{\ep,\eta}_x|^2  + g\left(\frac{x-c_1t}{\ep}\right) - e^{-\frac{(2\varphi^{\ep,\eta}+\delta)}{2\ep}}\\
		  	&\le\frac{A-\lambda}{2}(t-1)-\lambda + \Lambda_1^{\eta}-\frac{c_1^2}{4}+ \|g-g_\eta\|_\infty,\\
		  	 & \le\frac{A-\lambda}{2}(t-1)-\lambda +A +\Lambda_1^{\eta}-\Lambda_1+ \|g-g_\eta\|_\infty\\
		  	  & \le\frac{A-\lambda}{2}(t-1)-\lambda +A +2{\eta}\\
		  	   	  & \le\frac{-\lambda +A}{2} +2{\eta}<0.
		  \end{align*}
	It then follows from the maximum principle  that, for all small $\ep$,
		\begin{equation}\label{eq:1.34r}
			w^\ep(t,x) -\delta/2 \geq \varphi^{\ep,\eta}(t,x) = \frac{A-\lambda}{4}(t-1)^2 - t\lambda + \phi^\eta_0(x-c_1t) + o(1) \quad \text{ for }(t,x) \in Q_r.
		\end{equation}
		Choose $(t^\ep,x^\ep) \to (1,c_1)$ such that $w^\ep(t^\ep,x^\ep) \to w_*(1,c_1)=\rho_*(c_1) = -\lambda$. Evaluating \eqref{eq:1.34r} at $(t^\ep,x^\ep)$ and then letting $\ep \to 0$, we again deduce that $-\lambda - \delta/2 \geq \varphi^{0,\eta}(1,c_1) = -\lambda$, which is a contradiction. This concludes the proof.
	\end{proof}
	
	\begin{proof}[Proof of Proposition \ref{prop:5.3}]
		It is a direct consequence of Lemmas \ref{lem:flsub} and \ref{lem:flsupp}.
	\end{proof}
	
	\subsection{Equivalence between Ishii solution and FL solution with $A=A_0$}\label{subsec:5.2}

	This section is a special case of \cite[Section 7]{Imbert2017flux} with the general Hamilton $H(s,p)$ being discontinuous at $c_1$.

	\begin{proposition}\label{prop:equi}
		Let $H(s,p):[(0,\infty)\setminus\{c_1\}]\times \mathbb{R}$ be convex in $p$ and such that $ H(c_1\pm,p)$ are well-defined and  coercive, and ${\rm argmin}\,H(c_1+,\cdot) = {\rm argmin}\,H(c_1-,\cdot)$.

  Define 
		$$
		\tilde{H}(s,p) = H(s,p)\quad \text{ if }s \neq c_1,\quad \text{ and }\quad \tilde H(c_1,p):= \max\{H(c_1-,p), H(c_1+,p)\}.
		$$
		Then for any given nonnegative function $\rho$, it is
  a FL-supersolution (resp. FL-subsolution) to
		\begin{equation}\label{eq:fl0}
			\begin{cases}
				\min\{\rho,\rho + H(s,\rho')\}=0 \qquad \text{ in }(0,\infty)\setminus\{c_1\},\\
				\min\{\rho(c_1),\,\rho(c_1) + \max\{A_0, H^-(c_1+,\rho'(c_1+)), H^+(c_1-,\rho'(c_1-))\} =0,&
			\end{cases}
		\end{equation}
		with $A_0:= \max\{\min\limits_\mathbb{R} H(c_1+,\cdot), \min\limits_\mathbb{R} H(c_1-,\cdot)\}$, if and only if it is 
  a viscosity supersolution (resp. subsolution) in the sense of Ishii to
		\begin{equation}\label{eq:ishii0}
			\min\{\rho,\rho + \tilde H(s,\rho')\} = 0 \quad \text{ in }(0,\infty),
		\end{equation}
where the definition of viscosity sub/supersolutions of \eqref{eq:ishii0} in sense of Ishii is given in Definition \ref{def:1.2}.
	\end{proposition}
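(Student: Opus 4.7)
The plan is to reduce the proof to a careful analysis at $s=c_1$ after noting that the bulk equivalence away from $c_1$ is immediate. Indeed, for $s \neq c_1$, $H(s,p)=\tilde{H}(s,p)$ is continuous in $s$, so both the FL and Ishii (sub/super)solution conditions reduce to the same viscosity inequalities against $C^1$ test functions; equivalence on $(0,\infty)\setminus\{c_1\}$ follows at once. The entire substantive content is concentrated at $s=c_1$.

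My starting point is the algebraic identity
\begin{equation*}
F_{A_0}(p_+, p_-) = \max\{H^-(c_1+, p_+),\, H^+(c_1-, p_-)\}\quad\text{for all }(p_+, p_-) \in \mathbb{R}^2.
\end{equation*}
This holds because $H^-(c_1+, \cdot) \geq \min_\mathbb{R} H(c_1+, \cdot)$ and $H^+(c_1-, \cdot) \geq \min_\mathbb{R} H(c_1-, \cdot)$ by the construction of the monotone envelopes, so their maximum already dominates $A_0$. Hence the explicit $A_0$ in $F_{A_0}$ is redundant at the junction, and the FL condition takes the form of an envelope-type inequality aligned with the Ishii perspective.

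For the subsolution part at $c_1$, first note that an Ishii subsolution $\rho$ of \eqref{eq:ishii0} is, when restricted to each half-line $(0,c_1)$ or $(c_1,\infty)$, a standard viscosity subsolution of $\min\{\rho, \rho+H(s,\rho')\}=0$ (since $\tilde{H}=H$ there). By Lemma \ref{cor:im1rho'}, $\rho$ is locally Lipschitz and so satisfies the weak continuity condition \eqref{eq:weakcontrho}. For any $\psi \in C^1_{pw}$ touching $\rho$ from above at $c_1$ with $\rho(c_1)>0$, Lemma \ref{cor:subsolr} yields $\bar{p}_+ \leq 0$ with $H(c_1+, p_+ + \bar{p}_+) \leq -\rho(c_1)$. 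Since $H^-(c_1+,\cdot)$ is nonincreasing with $H^-\leq H$, I deduce $H^-(c_1+, p_+) \leq -\rho(c_1)$. Remark \ref{rem:subsoll} provides $H^+(c_1-, p_-) \leq -\rho(c_1)$ analogously. Taking the maximum and invoking the identity above delivers the FL junction inequality. The reverse implication is more routine: every $C^1$ Ishii test is a $C^1_{pw}$ test with matching one-sided derivatives, and the Ishii inequality follows from the FL one once Definition \ref{def:1.2} is unpacked and matched against the above identity for $F_{A_0}$.

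The supersolution part at $c_1$ mirrors the above, using Lemma \ref{cor:supersolr} and Remark \ref{rem: supsoll} in place of their subsolution counterparts. The \emph{main obstacle} is the degenerate regime in which the critical super-slopes $\underline{p}_+$ or $\underline{p}_-$ are $+\infty$: the lemmas then yield no direct estimate at $c_1$. In this case, the very definition of $\underline{p}_\pm$ forces $\rho$ to grow arbitrarily fast near $c_1$ on the relevant side, and a local comparison together with the unconditional bound $F_{A_0}(p_+, p_-) \geq A_0$ suffices to verify the Ishii supersolution inequality. Pinning down precisely how the FL junction with the specific flux limiter $A = A_0$ matches the Ishii envelope condition is the most delicate bookkeeping step of the argument; all other reasoning runs in parallel with the subsolution case.
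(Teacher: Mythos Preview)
Your proposal correctly handles three of the four implications. The bulk equivalence away from $c_1$ is immediate, the Ishii $\Rightarrow$ FL subsolution direction via the critical slope lemma is exactly as in the paper, and both FL $\Rightarrow$ Ishii directions are essentially algebra (though for the subsolution direction you should say explicitly that the common argmin $\tilde{p}$ is what makes $\min\{H(c_1\pm, p)\} \leq \max\{H^-(c_1+, p), H^+(c_1-, p)\}$ hold for every $p$).

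The gap is in the Ishii $\Rightarrow$ FL supersolution direction. Your claim that it ``mirrors'' the subsolution argument does not work, because the inequalities run the wrong way. In the subsolution case, Lemma \ref{cor:subsolr} gives $H(c_1+, p_+ + \bar{p}_+) \leq \lambda$ with $\bar{p}_+ \leq 0$, and the chain
\[
H^-(c_1+, p_+) \leq H^-(c_1+, p_+ + \bar{p}_+) \leq H(c_1+, p_+ + \bar{p}_+) \leq \lambda
\]
closes. In the supersolution case, Lemma \ref{cor:supersolr} gives $H(c_1+, p_+ + \underline{p}_+) \geq \lambda$ with $\underline{p}_+ \geq 0$; you would need $H^-(c_1+, p_+) \geq \lambda$, but now $H^-(c_1+, p_+) \geq H^-(c_1+, p_+ + \underline{p}_+)$ while $H^-(c_1+, p_+ + \underline{p}_+) \leq H(c_1+, p_+ + \underline{p}_+)$, and the chain breaks. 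The mirror argument works only when $p_+ + \underline{p}_+ \leq \tilde{p}$ (so that $H^- = H$ there), or the analogous condition on the left.

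When both critical slopes land on the wrong side of $\tilde{p}$, i.e.\ $-p_- - \underline{p}_- < \tilde{p} < p_+ + \underline{p}_+$, the step you are missing is this: the linear function $s \mapsto \rho(c_1) + \tilde{p}(s - c_1)$ is then a genuine $C^1$ test function touching $\rho$ from below at $c_1$, and you must invoke the Ishii supersolution property \emph{at} $c_1$ (not merely on either half-line, which is all the critical slope lemmas use) to obtain $\max\{H(c_1\pm, \tilde{p})\} = A_0 \geq \lambda$, which delivers the FL inequality. This is the only place in the entire proof where the Ishii condition at the junction point itself is used, and it is precisely why the specific value $A = A_0$ appears. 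You have also misidentified the main obstacle: the case $\underline{p}_\pm = +\infty$ is harmless (replace by any large finite number; coercivity makes the critical slope inequality hold trivially), while the substantive difficulty lies entirely in the finite ``wrong side'' configuration just described.
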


	Note also that FL-supersolution (resp. subsolution) with $A \leq A_0$ is equivalent to the case $A = A_0$.
{For the particular Hamiltonians satisfying 
	\begin{equation}
		{H}(c_1-,p) = -c_1p + p^2 + g(-\infty) \quad \text{ and }\quad {H}(c_1+,p) = -c_1p + p^2 + g(+\infty).
	\end{equation} 
that we consider in this paper, one has 
 $A_0=\max\{g(-\infty), g(+\infty)\} - c_1^2/4$.}
	
	\begin{proof}[Proof of Proposition \ref{prop:equi}]
		Denote  $\tilde p$ be the common value of ${\rm argmin}\, H(c_1\pm, \cdot)$.
		First, we show sufficiency, i.e. super/subsolution in sense of Ishii implies FL-super/subsolution. 
		
		Let $\rho$ be a viscosity supersolution of \eqref{eq:ishii0} in the sense of Ishii. Then $\rho \geq 0$ for all $s$.  
		
		Let $\psi(s)$ be a vertex test function touching $\rho$  from below at $s=c_1$. Denote 
		\begin{equation}\label{eq:not12}
			\lambda = -\rho(c_1), \quad p_+ = \psi'(c_1+),\quad p_- = -\psi'(c_1-).
		\end{equation}
		We need to show ${F_{A_0}}(p+,-p_-) \geq \lambda$, where
		$$
		{F_{A_0}}(p+,-p_-) = \max\{{A_0}, H^-(c_1+,p_+), H^+(c_1-,-p_-)\}.
		$$
		
		By the critical slope results (Lemma \ref{cor:supersolr}) and Remark \ref{rem: supsoll}), there exist $\underline{p}_\pm \geq 0$ such that
		\begin{equation}\label{eq:not13}
			H(c_1+,p_+ + \underline{p}_+) \geq \lambda \quad \text{ and }\quad H (c_1-,-p_- - \underline{p}_-) \geq \lambda. 
		\end{equation}
		(These $\underline{p}_\pm$ are given in \eqref{eq:crit_barp2r}-\eqref{eq:crit_barp2'r}. If any of them is infinite, then simply take a large enough positive number {satisfying} \eqref{eq:not13}.)

		If $A_0 \geq \lambda$, then  $F_{{A_0}}(p_+,-p_-)\geq A_0 \geq \lambda$, and we are done.
		
		If $p_+ + \underline{p}_+ \leq \tilde p$ ~(resp. $-p_- - \underline{p}_- \geq \tilde p$), then we are done, since
		$$
		H^-(c_1+,p_+) \geq H^-(c_1+,p_+ + \underline{p}_+) = H(c_1+,p_+ + \underline{p}_+) \geq \lambda \quad ({\rm resp. }~ H^+(c_1-,-p_-) \geq \lambda).
		$$
		Henceforth, we assume
		\begin{equation}
	\begin{cases}
	A_0 < \lambda \leq \min\{ H (c_1+, p_+ + \underline{p}_+),H (c_1-,- p_- - \underline{p}_-)\},\\
	-\infty<	-p_- - \underline{p}_- < \tilde p<p_+ + \underline{p}_+<+\infty.
\end{cases}
\label{eq:5.20}
		\end{equation}
		By the definition of the critical slopes,
		the second line in \eqref{eq:5.20} means that $\rho - \tilde\psi(s)$ has a strict local minimum at $s=c_1$, where $\tilde\psi \in C^1$ is the special smooth test function
		$$
		\tilde\psi(s) 
		= \psi(c_1) + \tilde{p} (s-c_1).
		$$
		By  solution property in the Ishii sense (see Definition \ref{def:1.2}(b)), we have
		$$
		A_0 = \max\{ \min H(c_1\pm,\cdot)\}=\max\{H(c_1\pm, \tilde p)\}  \geq \lambda.
		$$
		This is a contradiction with \eqref{eq:5.20},
   and shows that $\rho$ is FL-supersolution with flux limiter $A_0$. 
		
		Next, we show subsolution in sense of Ishii implies FL-subsolution.
		
		Let $\rho(c_1)>0$ and let $\psi \in C^1_{pw}$ be a vertex test function touching $\rho$  from above at $s=c_1$. 
		We need to show
		\begin{equation}\label{eq:im2.20r}
			\max\{A_0, H^-(c_1+,~p_+), H^+(c_1-,\,-p_-)\} \leq \lambda, 
		\end{equation}
		where $\lambda,\,p_+,\, p_-$ are as in \eqref{eq:not12}, and $H^-(s,\cdot)$ and $H^+(s,\cdot)$ denotes the decreasing and increasing part of $H(s,\cdot)$, respectively.
		
		By critical slope results in 
	Lemma \ref{cor:subsolr} and Remark \ref{rem:subsoll} ($\rho$ enjoys weak continuity property thanks to Lemma \ref{cor:im1rho'}), there exist finite real numbers $\bar p_\pm \leq 0$ (given by \eqref{eq:crit_barp1r}-\eqref{eq:crit_barp1'r})  such that
		\begin{equation}\label{eq:im2.21r}
			H (c_1+, p_+ + \bar p_+) \le \lambda \quad \text{ and }\quad H(c_1-,-p_- - \bar p_-) \leq \lambda. 
		\end{equation}
		In particular, we deduce that
		\begin{equation}\label{eq:im2.22r}
			A_0 \leq \lambda. 
		\end{equation}
		Moreover, \eqref{eq:im2.21r} also implies that at $(t_0,0)$,
		\begin{align}
			H^-(c_1+,p_+) &\leq H^-(c_1+,p_+ + \bar{p}_+) \quad \text{ since }H^-(c_1+,\cdot)\text{ is nonincreasing}, \notag\\
			&\leq  H(c_1+, p_+ + \bar{p}_+) \leq  \lambda. \label{eq:im2.22br}
		\end{align}
		Similarly, we also obtain $H^+(c_1-,-p_-) \leq \lambda$.
		Combining with  \eqref{eq:im2.22r} and \eqref{eq:im2.22br}, we obtain \eqref{eq:im2.20r}. This proves that $w$ is a FL-subsolution with $A=A_0$.
		
		Next, we show the converse statement, i.e. FL-super/subsolution implies super/subsolution in sense of Ishii.
		
		Let $\rho$ be a FL-supersolution of \eqref{eq:fl0}, and let $\psi$ be a $C^1$ test function touching $\rho$ from below at $s=c_1$. 
		Then we have $\rho \geq 0$ for all $s$, and 
		$$
		\max\{H(c_1+, \psi'(c_1)), H(c_1-, \psi'(c_1))\} \geq 
		\max\{H^-(c_1+,\psi'(c_1), H^+(c_1-, \psi'(c_1)\} \geq -\psi(c_1).
		$$
		This proves that $\rho$ is viscosity supersolution of \eqref{eq:ishii0} in the Ishii sense.
		
		Finally, let $\rho$ be a FL-subsolution of \eqref{eq:fl0} with $A=A_0$, and $\psi$ be a $C^1$ test function touching $\rho$ from below at $s=c_1$. Then we have
		$$
	\tilde{H}(c_1,\psi'(c_1))=\max\{H^-(c_1+,\psi'(c_1), H^+(c_1-, \psi'(c_1)\} \leq -\rho(c_1).
		$$
		Now, since $\tilde p={\rm argmin}\, H(c_1+,\cdot)= {\rm argmin}\, H(c_1-,\cdot)$, we either have $\psi'(c_1) \geq \tilde p$ or $\psi'(c_1) < \tilde p$. In the former case, we have $H(c_1-,\psi'(c_1)) = H^+(c_1-, \psi'(c_1)) \leq -\rho(c_1).$ In the latter case, we have $H(c_1+,\psi'(c_1)) = H^-(c_1+, \psi'(c_1) \leq -\rho(c_1).$ This implies that 
		$$
		\tilde{H}_*(c_1,\psi'(c_1))=\min\{H(c_1-,\psi'(c_1)), H(c_1+,\psi'(c_1))\} \leq -\rho(c_1). 
		$$
		i.e. $\rho$ is a viscosity subsolution of \eqref{eq:ishii0} in the sense of Ishii.
	\end{proof}
	
	{Next, we specialize to the class of Hamiltonian defined in \eqref{eq:ishii3}, and prove the first part of Theorem \ref{thm:2.13}.}
	
	\begin{proof}[Proof of Theorem \ref{thm:2.13}, first part.] We establish Theorem \ref{thm:2.13} in case $\Lambda_1 \leq \max\{g(\pm\infty)\}$.
Then $\Lambda_1 = \max\{g(\pm\infty)\}$ (thanks to Proposition \ref{prop:2.1}(a)). By Proposition \ref{prop:5.3}, $\rho^*$ (resp. $\rho_*$) is a FL-subsolution (resp. FL-supersolution) of \eqref{eq:fl} with 
		$$
		A = A_0:= \max\{g(\pm\infty)\} - \frac{c_1^2}{4}.
		$$
		Thanks to Proposition \ref{prop:equi}, $\rho^*$ and $\rho_*$ are viscosity sub- and supersolution of \eqref{eq:ishii1} in the Ishii sense.
		Moreover, it follows from Lemma \ref{lem:5.2} that
		\begin{equation}
			\rho^*(0) =0 \leq \rho_*(0), \quad \text{ and }\quad \rho_*(s)/s \to +\infty \quad \text{ as }s \to +\infty. 
		\end{equation}
		Hence, we may apply the comparison principle \cite[Proposition 2.11]{Lam2022asymptotic} for viscosity solutions in the Ishii sense to deduce that
		$$
		\rho^*(s) \leq \rho_*(s) \quad \text{ for all }s \geq 0.
		$$
		Since also $\rho^* \geq \rho_*$ by construction (see \eqref{eq:wstars}), we conclude that $\rho^* \equiv \rho_*$. We define $\hat\rho_{A_0}$ to be the the common value. 
		This proves the existence and uniqueness of $\hat\rho_{A_0}$ stated in Proposition \ref{prop:2.10}. (Note that this also settles the case $A \leq A_0$, as they yield the same equation \eqref{eq:fl}.)
		
		Furthermore, $w^\ep(t,x) \to t \hat\rho_{A_0}(x/t)$ in $C_{loc}((0,\infty)\times (0,\infty))$. Let $s_{base}=\sup\{s \geq 0:~ \hat\rho_{A_0}(s)=0\}$, then $\hat\rho_{A_0}(s) >0$ for $s > s_{base}$. This gives 
		$$
		u^\ep(t,x) = e^{-\frac{w^\ep(t,x)}{\ep}} \to 0 \quad \text{ locally uniformly for }\{(t,x):~t>0,~x>s_{base} t\},
		$$
		i.e. $\overline{c}^* \leq s_{base}$, where $\overline{c}^*$ is the maximal spreading speed given in \eqref{eq:speeds}.
		
		Next, we observe that $\hat\rho_{A_0}$ is monotone increasing (Lemma \ref{cor:im1rho'}(a)), so that $\hat\rho_{A_0}(s) = 0$ for $s \in [0,s_{base}]$ and hence
		$$
		w^\ep(t,x) \to 0 \quad \text{ in }C_{loc}(\{(t,x):~t>0,~0\leq x <s_{base} t\}).
		$$
		It then follows as in \cite[Lemma 3.1]{Liu2021asymptotic} that 
		$$
		\liminf_{\ep \to 0} \inf_K u^\ep(t,x) \geq \inf g>0 
		$$
		for each compact subset $K \subset \{(t,x):~t>0,~0\leq x <s_{base} t\}$. For each $\eta>0$, we may take $K = \{(1,s):~ \inf g /2\leq s \leq  s_{base}-\eta\}$, and deduce
		$$
		\liminf_{t\to\infty} \inf_{\frac{\inf g}{2}t \leq x \leq (s_{base}-\eta)t} u(t,x) = \liminf_{\ep \to 0} \inf_K u^\ep(t,x) >0 \quad \text{ for any }\eta>0.
		$$
		Since $\eta>0$ is arbitrary, this implies $\underline{c}_* \geq s_{base}$. Combining with $\overline{c}_* \leq s_{base}$, we obtain $\overline{c}_*=\underline{c}_* = s_{base}.$ This concludes the proof of Theorem \ref{thm:2.13} in the case $\Lambda_1 = \max\{g(\pm\infty)\}.$
	\end{proof}

	Having verified that $w^*$ and $w_*$ are FL-subsolution and FL-supersolution of \eqref{eq:fl}, one may apply the arguments in \cite{Imbert2017flux} to obtain a comparison principle. Here, however, we will
 follow the arguments due to Lions and Souganidis \cite{Lions2017well} to show that they are in fact viscosity sub- and supersolutions of certain Kirchhoff junction conditions, and establish the more general comparison principle  (see Appendix \ref{sec:comp}).

 \begin{remark}
     The concept of FL-sub/supersolutions was originally introduced in \cite{Imbert2017flux,Imbert2017quasi}, in which the authors established the comparison principle based on the construction of certain ``vertex test functions".
 \end{remark}

	\subsection{Verification of Kirchhoff junction conditions}\label{sec:5.3}

	Let $B \in \mathbb{R}$ be given. 
	We consider the Hamilton-Jacobi equation with Kirchhoff junction condition:
	\begin{equation}\label{eq:kc1}
		\begin{cases}
			\min\{\rho, \rho+ H(s,\rho')\} = 0 \quad \text{ for }s \neq c_1,\\
			\min\{\rho(c_1), \min\{\rho(c_1) + H(c_1\pm,\rho'(c_1\pm)), \rho'(c_1-) - \rho'(c_1+) - B\}\} \leq 0,\\
			\min\{\rho(c_1), \max\{\rho(c_1) + H(c_1\pm,\rho'(c_1\pm)), \rho'(c_1-) - \rho'(c_1+) - B\} \}\geq 0.
		\end{cases}
	\end{equation}
	The definition of viscosity solution to the above problem also involves the use of piecewise $C^1$ test functions.
	\begin{definition}
		\begin{itemize}
			\item[{\rm(a)}] We say that $\underline\rho$ is a viscosity subsolution of \eqref{eq:kc1} provided (i) $\underline\rho$ is upper semicontinuous, and (ii) if $\underline\rho - \psi$ has a local maximum point at some $s_0$ such that $\psi \in C^1_{pw}$ and $\underline\rho(s_0)>0$, then 
			$$
			\underline\rho(s_0)+ H(s_0,\psi'(s_0)) \leq 0 \quad \text{ in case } s_0 \neq c_1 ,
			$$
			$$
			\min\{\underline\rho(c_1) + H(c_1\pm,\psi'(c_1\pm)), \psi'(c_1-) - \psi'(c_1+) - B\}\leq 0 \quad \text{ in case }s_0 = c_1.
			$$
			
			\item[{\rm(b)}]  We say that $\overline\rho$ is a viscosity supersolution of \eqref{eq:kc1} provided (i) $\overline\rho$ is lower semicontinuous, (ii) $\overline\rho \geq 0$ for all $s$, and (iii) if $\overline\rho - \psi$ has a local minimum point at some $s_0$ such that $\psi \in C^1_{pw}$ then
			$$
			\overline\rho(s_0)+ H(s_0,\psi'(s_0)) \geq 0 \quad \text{ in case } s_0 \neq c_1 ,
			$$
			$$
			\max\{\overline\rho(c_1) + H(c_1\pm,\psi'(c_1\pm)), \psi'(c_1-) - \psi'(c_1+) - B\}\geq 0 \quad \text{ in case }s_0 = c_1.
			$$
			
			\item[{\rm(c)}] We say that $\rho$ is a viscosity solution of \eqref{eq:kc1} provided it is a viscosity subsolution and supersolution of \eqref{eq:kc1}.
		\end{itemize}
	\end{definition}
	
	Next,  for each flux-limiter $A\ge A_0$, where $A_0 = \max\{g(\pm\infty)\} - \frac{c_1^2}{4}$, we associate a Kirchhoff junction condition parameter $B$ as follows:
	$$
	B = -\mu_+ - \mu_-,
	$$
	where $\mu_+, \mu_-$ are uniquely determined in terms of $A$ by
	\begin{equation}\label{eq:mupm11}
		H^+(c_1+,\mu_+) = A \quad \text{ and }\quad H^-(c_1-,-\mu_-) = A.
	\end{equation}
	By recalling the definition of $H^+(c_1\pm,p)$ and $H^-(c_1\pm,p)$ in \eqref{eq:Hplus} and \eqref{eq:Hminus}, we deduce 
	$$
	\mu_+ = \frac{1}{2}(c_1 + \sqrt{c_1^2 + 4(A - g(+\infty))}),\quad \mu_- = \frac{1}{2}(-c_1 + \sqrt{c_1^2 + 4(A - g(-\infty))})
	$$
	\begin{lemma} \label{lem:5.10}
		Let $A \ge A_0:= \max\{g(\pm\infty)\} - \frac{c_1^2}{4}$, and define $\mu_\pm$ in terms of $A$ as in \eqref{eq:mupm11}.
		If $\underline\rho$ is a FL-subsolution to \eqref{eq:fl}, 
  then it is a viscosity subsolution of the problem \eqref{eq:kc1} with Kirchhoff junction condition with parameter $B = -\mu_+ - \mu_-$.
	\end{lemma}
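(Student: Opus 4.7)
Away from the junction $s=c_1$, the equations in \eqref{eq:kc1} and \eqref{eq:fl} are identical, so the subsolution property transfers without any work. All the content is at $s=c_1$. Fix $\psi\in C^1_{pw}$ touching $\underline\rho$ from above at $c_1$ with $\underline\rho(c_1)>0$, and write
$$
\lambda:=-\underline\rho(c_1),\qquad p_+:=\psi'(c_1+),\qquad p_-:=\psi'(c_1-).
$$
The FL-subsolution property at $c_1$ gives
$$
\max\{A,\ H^-(c_1+,p_+),\ H^+(c_1-,p_-)\}\ \leq\ \lambda,
$$
while the Kirchhoff subsolution condition with $B=-\mu_+-\mu_-$ is equivalent to the assertion that at least one of the three alternatives
$$
\mathrm{(i)}\ H(c_1+,p_+)\leq\lambda,\qquad \mathrm{(ii)}\ H(c_1-,p_-)\leq\lambda,\qquad \mathrm{(iii)}\ p_+-p_-\geq\mu_++\mu_-
$$
holds. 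The proof is a short three-way case analysis.

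The key structural facts are: $p\mapsto H(c_1\pm,p)=p^2-c_1p+g(\pm\infty)$ is convex and minimized at $p=c_1/2$; the quantity $\mu_+$ defined in \eqref{eq:mupm11} is the larger root of $H(c_1+,\cdot)=A$, so $\mu_+\geq c_1/2$ and $H(c_1+,\cdot)$ is nondecreasing on $[c_1/2,\mu_+]$ with $H(c_1+,\mu_+)=A$; symmetrically $-\mu_-$ is the smaller root of $H(c_1-,\cdot)=A$, so $-\mu_-\leq c_1/2$, and $H(c_1-,\cdot)$ is nonincreasing on $[-\mu_-,c_1/2]$ with $H(c_1-,-\mu_-)=A$. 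The hypothesis $A\geq A_0$ guarantees that these roots are real.

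\emph{Case A} ($p_+\leq\mu_+$): if $p_+\leq c_1/2$, the formula for $H^-(c_1+,\cdot)$ in \eqref{eq:Hminus} gives $H(c_1+,p_+)=H^-(c_1+,p_+)\leq\lambda$; if instead $c_1/2<p_+\leq\mu_+$, then monotonicity of $H(c_1+,\cdot)$ on $[c_1/2,\infty)$ and $A\leq\lambda$ yield $H(c_1+,p_+)\leq H(c_1+,\mu_+)=A\leq\lambda$. Either way (i) holds. \emph{Case B} ($p_-\geq-\mu_-$): by the symmetric argument using $H^+(c_1-,\cdot)$ from \eqref{eq:Hplus} and monotonicity of $H(c_1-,\cdot)$ on $(-\infty,c_1/2]$, one obtains (ii). \emph{Case C} (neither A nor B): $p_+>\mu_+$ and $p_-<-\mu_-$ combine to give $p_+-p_->\mu_++\mu_-$, i.e.\ (iii). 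Since Cases A, B, C exhaust all possibilities, the Kirchhoff subsolution condition is verified.

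There is no genuine obstacle here; the only point to watch is that the FL-inequalities $H^-(c_1+,p_+)\leq\lambda$ and $H^+(c_1-,p_-)\leq\lambda$ become vacuous precisely in the regimes $p_+>c_1/2$ and $p_-<c_1/2$, respectively, which is exactly where one must instead exploit $A\leq\lambda$ together with the location of the roots $\mu_+,-\mu_-$ of $H(c_1\pm,\cdot)=A$. The lemma is specific to the quadratic $H$ of \eqref{eq:Hplus}--\eqref{eq:Hminus}, but the argument is entirely elementary once the relevant monotonicity intervals are identified.
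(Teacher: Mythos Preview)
Your proof is correct. Both you and the paper reduce to the junction and verify the Kirchhoff alternative at $c_1$ when $\underline\rho(c_1)>0$, using the three consequences $A\leq\lambda$, $H^-(c_1+,p_+)\leq\lambda$, $H^+(c_1-,\psi'(c_1-))\leq\lambda$ of the FL-subsolution property. The paper proceeds by contradiction (assume both Hamiltonian alternatives fail, derive the slope inequality) and invokes the critical slope lemma (Lemma~\ref{cor:subsolr}) to place $p_+$ on the increasing branch of $H(c_1+,\cdot)$. Your case split instead reads this off directly: when $p_+\leq c_1/2$ one has $H(c_1+,p_+)=H^-(c_1+,p_+)\leq\lambda$, and when $c_1/2<p_+\leq\mu_+$ one uses $A\leq\lambda$ and monotonicity of $H(c_1+,\cdot)$ on $[c_1/2,\infty)$. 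This is a genuine, if minor, simplification: your argument is self-contained and dispenses with the critical slope machinery and the weak continuity hypothesis it requires.
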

	\begin{proof}
		{It} remains to show that $\underline\rho$ is a subsolution to the second equation of \eqref{eq:kc1}. For this purpose, let $\psi \in C^1_{pw}$ and suppose $\underline\rho - \psi$ has a strict global maximum point at $c_1$, and that $\psi(c_1) =\underline\rho(c_1) >0$. Denote 
		$$
		\lambda = -\underline\rho(c_1) = -\psi(c_1),\qquad p_+ = \psi'(c_1+),\qquad p_- = -\psi'(c_1-).
		$$
		Suppose 
		\begin{equation}\label{eq:kirchhoffr1}
			H(c_1+,p_+)> \lambda \quad \text{ and }\quad H(c_1-,p_-)> \lambda,
		\end{equation}
		we need to show that
		\begin{equation}\label{eq:kirchhoffr}
			-p_- - p_+ + \mu_+ + \mu_- \leq 0.
		\end{equation}
		Thanks to the critical slope lemma (Lemma \ref{cor:subsolr}), $H(c_1+, p_+ + \bar p_+) \leq \lambda$ for some $\bar p_+ < 0$, it follows that $p_+ \ge \text{argmin}~H(c_1+,\cdot)$. Similarly, we have $-p_- \le \text{argmin}~H(c_1-,\cdot)$.

By the definition of FL-subsolution (see Defintion \ref{def:rhoflsol}), it follows that	$\lambda \geq A$. This, together with the fact that ${\mu_+}$ (resp. $-{\mu_-}$) is the larger (resp. smaller) root of $p \mapsto H(c_1\pm,p) - A$, implies
		\begin{equation}
			p_+ \ge {\mu_+} \quad \text{ and }\quad -p_- \le -{\mu_-}.
		\end{equation}
		Therefore, we obtain $-p_+ - p_- + {\mu_+} + {\mu_-} \leq 0$.
	\end{proof}
	
	\begin{lemma}\label{lem:5.11}
		Let $A\ge A_0$. 
		If $\overline\rho$ is a FL-supersolution of \eqref{eq:fl}, then it is a viscosity supersolution of the problem \eqref{eq:kc1} with Kirchhoff condition with parameter $B = -\mu_+ - \mu_-$.
	\end{lemma}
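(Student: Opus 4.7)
I would mirror the proof of Lemma \ref{lem:5.10}, reversing the roles of sub- and supersolutions. The supersolution property away from $c_1$ passes immediately from the first line of \eqref{eq:fl} (which is identical to the interior part of \eqref{eq:kc1}), so the only substantive task is to verify the Kirchhoff junction inequality at $s = c_1$.

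Given a test function $\psi \in C^1_{pw}$ such that $\overline\rho - \psi$ attains a local minimum at $c_1$, set $\lambda := -\overline\rho(c_1)$, $p_+ := \psi'(c_1+)$, and $p_- := -\psi'(c_1-)$. With the identification $B = -\mu_+ - \mu_-$, the goal becomes
$$
\max\bigl\{-\lambda + H(c_1+, p_+),\ -\lambda + H(c_1-, -p_-),\ \mu_+ + \mu_- - p_+ - p_-\bigr\} \geq 0.
$$
I would argue by contradiction, assuming simultaneously (i) $H(c_1+, p_+) < \lambda$, (ii) $H(c_1-, -p_-) < \lambda$, and (iii) $p_+ + p_- > \mu_+ + \mu_-$.

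The key step is to play the FL-supersolution inequality $\max\{A,\, H^-(c_1+, p_+),\, H^+(c_1-, -p_-)\} \geq \lambda$ against (i)--(ii). The explicit formulas \eqref{eq:Hplus}--\eqref{eq:Hminus} yield $H^-(c_1+,\cdot) \leq H(c_1+,\cdot)$ and $H^+(c_1-,\cdot) \leq H(c_1-,\cdot)$, so the two ``truncated Hamiltonian'' terms in the max are eliminated by (i)--(ii), forcing $A \geq \lambda$. Now \eqref{eq:mupm11} identifies $\mu_+$ as the larger root of $H(c_1+,\cdot) = A$ (since $\mu_+ \geq c_1/2 = \mathrm{argmin}\,H(c_1+,\cdot)$) and $-\mu_-$ as the smaller root of $H(c_1-,\cdot) = A$. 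Combined with the strict inequalities (i)--(ii) and $\lambda \leq A$, the convexity of $H(c_1\pm,\cdot)$ places $p_+$ strictly below $\mu_+$ and $-p_-$ strictly above $-\mu_-$. Summing yields $p_+ + p_- < \mu_+ + \mu_-$, which contradicts (iii).

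The only mild delicacy is bookkeeping: one must track which root of each parabolic $p \mapsto H(c_1\pm, p)$ corresponds to $\mu_\pm$ via the increasing/decreasing-branch convention in \eqref{eq:Hplus}--\eqref{eq:Hminus}. This is purely a sign check and parallels the computation already carried out for $\mu_+$, $\mu_-$ in Lemma \ref{lem:5.10}; no new technical obstacle is anticipated.
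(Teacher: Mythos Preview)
Your proposal is correct and follows essentially the same route as the paper's proof: assume the two Hamiltonian inequalities fail, use the FL-supersolution junction condition to force $A\geq\lambda$, then locate $p_+$ and $-p_-$ relative to the roots $\mu_+$ and $-\mu_-$ via convexity to obtain $p_+ + p_- < \mu_+ + \mu_-$. The only cosmetic difference is that you frame it as a contradiction against (iii), whereas the paper derives the Kirchhoff inequality $-p_- - p_+ + \mu_+ + \mu_- \geq 0$ directly; the logic is identical.
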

	\begin{proof} It remains to verify the second condition of \eqref{eq:kc1}. Suppose  there is a test function 
		$\psi \in C^1_{pw}$ that touches $\overline\rho$ from {below} only at $s=c_1$.
		
		Denote 
		$$
		\lambda = - \overline\rho(c_1), \qquad p_+ = \psi'(c_1+), \qquad  p_- = -\psi'(c_1-).
		$$
		Suppose 
		\begin{equation}\label{eq:kirchhoff3r}
			H(c_1+,\,p_+) < \lambda \quad \text{ and }\quad H(c_1-,\,-p_-) < \lambda,
		\end{equation}
		we need to show 
		\begin{equation}\label{eq:kirchhoff4r}
			-p_- - p_+ + {\mu_+} + {\mu_-} \geq 0.
		\end{equation}
		Since $\overline\rho$ is a FL-supersolution, we have $A \geq \lambda$. Upon considering
		\eqref{eq:kirchhoff3r}, and also   ${\mu_+}$ (resp. $-{\mu_-}$) being the larger (resp. smaller) root of $H^+(c_1+,p) = A$ (resp. $H^-(c_1-, p)=A$), we deduce\footnote{Note that we do not need $\bar{p}_+<+\infty$ here, comparing with the proof of the previous verification for junction subsolution.  This asymmetry in the arguments of super and subsolutions is due to the fact that $H$ is quasiconvex and coercive.}
		\begin{equation}
			p_+ < {\mu_+} \quad \text{ and }\quad -p_- > -{\mu_-}.
		\end{equation}
		This implies \eqref{eq:kirchhoff4r}.
	\end{proof}
	\begin{corollary}
		$\rho^*$ (resp. $\rho_*$) are viscosity subsolution (resp. supersolution) of \eqref{eq:kc1} with $B=-\mu_+ - \mu_-$ where $\mu_\pm$ are associated with $A = \Lambda_1 - \frac{c_1^2}{4}$ via \eqref{eq:mupm}.
	\end{corollary}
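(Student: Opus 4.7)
The proof plan is essentially a direct combination of the three immediately preceding results, so I would keep it short.

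First, I would invoke Proposition \ref{prop:5.3}, which already establishes that $\rho^*$ is an FL-subsolution and $\rho_*$ is an FL-supersolution of \eqref{eq:fl} with the specific flux limiter $A = \Lambda_1 - \tfrac{c_1^2}{4}$. Note that by Proposition \ref{prop:2.1}(a) we have $\Lambda_1 \geq \max\{g(\pm\infty)\}$, so $A \geq A_0 = \max\{g(\pm\infty)\}-\tfrac{c_1^2}{4}$, which puts us in the regime where Lemmas \ref{lem:5.10} and \ref{lem:5.11} apply.

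Next, I would check that the parameter $B$ in the Kirchhoff junction condition matches. From the identification \eqref{eq:mupm11}, the quantities $\mu_\pm$ associated with $A = \Lambda_1 - c_1^2/4$ are the unique real numbers satisfying $H^+(c_1+,\mu_+) = A$ with $\mu_+ \geq c_1/2$, and $H^-(c_1-,-\mu_-) = A$ with $-\mu_- \leq c_1/2$. Solving the quadratics gives exactly the formulas in \eqref{eq:mu-pm} used in Section \ref{sec:proof}, so $B = -\mu_+ - \mu_-$ is well-defined and coincides with the Kirchhoff parameter appearing in Lemmas \ref{lem:5.10} and \ref{lem:5.11}.

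Finally, applying Lemma \ref{lem:5.10} to $\rho^*$ yields the viscosity subsolution property of \eqref{eq:kc1} with $B = -\mu_+ - \mu_-$, and applying Lemma \ref{lem:5.11} to $\rho_*$ yields the corresponding viscosity supersolution property. Since there are no additional conditions beyond the FL-sub/supersolution property and the formula for $B$, the conclusion is immediate.

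I do not anticipate any obstacle here: the corollary is purely a packaging step that rephrases the FL-condition at the junction $s=c_1$ (which limits admissible slopes through the flux limiter $A$) as the Kirchhoff-type condition (which instead constrains the jump in derivatives by $B$). The genuine content has already been absorbed into Lemmas \ref{lem:5.10}--\ref{lem:5.11} and Proposition \ref{prop:5.3}; the corollary merely states the combination of these facts for the specific half-relaxed limits $\rho^*, \rho_*$.
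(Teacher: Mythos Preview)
Your proposal is correct and follows essentially the same approach as the paper: invoke Proposition~\ref{prop:5.3} to obtain the FL-sub/supersolution property with $A=\Lambda_1 - c_1^2/4$, verify $A\ge A_0$ via Proposition~\ref{prop:2.1}(a), and then apply Lemmas~\ref{lem:5.10} and~\ref{lem:5.11} to pass to the Kirchhoff condition with $B=-\mu_+-\mu_-$. The only difference is that you add a brief consistency check between the two characterizations of $\mu_\pm$, which the paper omits.
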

	\begin{proof}
		Fix $A=\Lambda_1 - \frac{c^2_1}{4} > \max\{g(\pm\infty)\} - \frac{c^2_1}{4}.$ Define $\mu_+,\mu_-$ by \eqref{eq:mupm}. 
		
		By Proposition \ref{prop:5.3}, $\rho^*$ and $\rho_*$ are FL-subsolution and supersolution of \eqref{eq:fl} with $A = \Lambda_1 - \frac{c_1^2}{4}$, respectively. By Lemmas \ref{lem:5.10} and \ref{lem:5.11}, they are viscosity sub- and supersolutions of \eqref{eq:kc1} with $B=-\mu_+-\mu_-$.
	\end{proof}
	
	\subsection{Proof of main results}\label{subsec:5.4}
	
	\begin{proof}[Proof of Proposition \ref{prop:2.10}]
		Recall that $A_0 = \max\{g(\pm\infty)\} - \frac{c_1^2}{4}$. 
		Let $\underline\rho$ and $\overline\rho$ be a pair of FL-subsolution and FL-supersolution of \eqref{eq:fl} for some $A \geq A_0$, such that \eqref{eq:2.8rr} holds.
		
		If $A\le A_0$ holds, then by Proposition  \ref{prop:equi}, $\underline\rho$, $\overline\rho$ is a pair of viscosity sub- and supersolution of \eqref{eq:ishii1} in the sense of Ishii. The comparison principle follows from \cite[Proposition 2.11]{Lam2022asymptotic}.

		Henceforth, we assume $A > A_0$. Then, it follows from Lemmas \ref{lem:5.10} and \ref{lem:5.11} that $\underline\rho$, $\overline\rho$ are a pair of sub- and supersolutions of \eqref{eq:kc1}. The comparison principle then follows from Theorem \ref{thm:comparison}. 
	\end{proof}
	
	\begin{proof}[Proof of Corollary \ref{prop:2.12} and Theorem \ref{thm:2.13}]
		For given function $g$, denote $A=\Lambda_1 - \frac{c_1^2}{4}$, where $\Lambda_1$ is given in \eqref{eq:lambda1}. By Proposition \ref{prop:2.1}(a), $A \geq A_0:= \max\{g(\pm\infty)\} - \frac{c_1^2}{4}$.

		For the case $A =A_0$, in view of  Proposition \ref{prop:equi}, the problem is equivalent to \eqref{eq:ishii1} and \eqref{eq:ishii2}. The existence and uniqueness of $\hat\rho_{A_0}$ follows from \cite[Proposition 1.7(b)]{Lam2022asymptotic}. The convergence $w^\ep(t,x) \to t\hat\rho(x/t)$ in $C_{loc}$ and determination of spreading speed is given in the first part of the proof of Theorem \ref{thm:2.13} (see Subsection \ref{subsec:5.2}).
		
		For the case $A > A_0$, it follows that $\rho^*$ and $\rho_*$ are viscosity sub- and supersolutions of \eqref{eq:kc1} and satisfies $\rho^*(0) = \rho_*(0) = 0$  and $\rho_*(s)/s \to +\infty$ as $s \to +\infty$ (see Lemma \ref{lem:5.2}). Hence, Theorem \ref{thm:comparison} implies $\rho^* \leq \rho_*$. Arguing similarly as in the proof of the first part Theorem \ref{thm:2.13} (in Subsection \ref{subsec:5.2}), we conclude that (i) \eqref{eq:fl}-\eqref{eq:ishii2} has a unique FL-solution $\hat\rho_A(s)$; (ii) $w^\ep(t,x) \to t\hat\rho_A(x/t)$ in $C_{loc}$; (iii) 
		the spreading speed is given by $c_* = \hat{s}_A = \sup\{s\geq 0:~ \hat\rho_A(s)=0\}$.
	\end{proof}

	\appendix
	
	\section{Proof of Proposition \ref{prop:2.1}}
	\label{sec:prop21}

	\begin{proof}[Proof of Proposition \ref{prop:2.1}]
		For (a), observe that if $\phi'' + g(y)\phi \leq \Lambda \phi$ in $\mathbb{R}$ for some positive $\phi \in C^2_{loc}(\mathbb{R})$ and $\Lambda < g(+\infty)$, then $\phi'' <0$ for $y \gg 1$. Since $\phi>0$ in $\mathbb{R}$, we deduce that $\phi'>0$ for $y \gg 1$, and hence $\phi'(+\infty)\in[0,+\infty)$  and $\phi(+\infty)\in(0,+\infty]$ both exist.  However, this means $\limsup_{y \to \infty} \phi''(y) \leq (\Lambda - g(+\infty)) \phi(+\infty) <0$, which contradicts {$\phi'(+\infty)\ge 0$}. This proves $\Lambda_1 \geq g(+\infty)$. Similarly, we can show that $\Lambda_1 \geq g(-\infty)$. Next, we apply \cite[Theorem 1.4]{Berestycki2014generalizations} to infer that \eqref{eq:pev0} has a positive solution if and only if $\Lambda \in [\Lambda_1,\infty).$ (Note that $\Lambda_1 = -\lambda_1(L,\mathbb{R})$ in the notation of \cite{Berestycki2014generalizations}.) This proves (a).
		
	For (b)(i), we first apply \cite[Proposition 1.11(ii)]{Berestycki2014generalizations} to deduce that if $\Lambda_1 > \max\{g(\pm\infty)\}$, then $\Lambda_1$ is a simple eigenvalue and $\Phi_1$ converges exponentially to zero as $|y| \to\infty$.  To establish \eqref{est-decay-rate}, it suffices to prove the estimation of accurate decay rate of $\Phi_1$ at $+\infty$. Since $\Lambda_1>g(+\infty)$, there exists $a_0>0$ such that $\Lambda_1>g(y)$ for any $y\ge a_0$. Now we define $$\overline{\lambda}(a):=\sup_{y\ge a} \sqrt{\Lambda_1-g(y)}, \quad \underline{\lambda}(a):=\inf_{y\ge a} \sqrt{\Lambda_1-g(y)}, \quad \forall y\ge a_0.$$ 
		Fix $a\ge a_0$. Then for any $\epsilon>0$ and $M>0$, it is easy to check that
		$\bar\Phi^M_\epsilon(y)= Me^{-\underline{\lambda}(a)y}+\epsilon$ is a supersolution of \eqref{eq:pev0} on $[a,+\infty)$, that is,
		$$(\bar \Phi^M_\epsilon)''+(g(y)-\Lambda_1)\bar \Phi^M_\epsilon\le 0 \text{ on } [a,+\infty).$$
		For any given small $\epsilon_0>0$, there exists $M_1>{e^{\underline{\lambda}(a)a}\Phi_1(a)}$ such that $\bar \Phi^{M_1}_{\epsilon_0}(y)> \Phi_1(y)$ on $[a,+\infty)$. Using the sliding argument or strong maximum principle, we infer that  $\bar \Phi^{M_1}_{\epsilon}(y)> \Phi_1(y)$ on $[a,+\infty)$ for any $\epsilon\in(0,\epsilon_0]$. Letting $\epsilon\to 0^+$, we have 
		$M_1e^{-\underline{\lambda}(a)y}\ge \Phi_1(y)$ on $[a,+\infty)$.  
		
		Similarly, for any $M>0$ and $\epsilon\in(0,e^{-a\bar\lambda(a)})$, $\underline{\Phi}^M_\epsilon(y):= \max\{0, M(e^{-\overline{\lambda}(a)y}-\epsilon) \}$ is a subsolution of \eqref{eq:pev0} in $(a,+\infty)$, that is,
		$$(\underline{\Phi}^M_\epsilon)''+(g(y)-\Lambda_1)\underline{\Phi}^M_\epsilon\ge 0 \text{ on } [a,+\infty).$$
		For any given small $\epsilon_0>0$, there exists $M_2>0$  such that 
		$\underline{\Phi}^{M_2}_{\epsilon_0}(y)< \Phi_1(y)$ on $[a,+\infty)$. By the sliding argument again, it follows that $\underline{\Phi}^{M_2}_{\epsilon}(y)< \Phi_1(y)$ on $[a,+\infty)$ for any $\epsilon\in(0,\epsilon_0]$. Letting $\epsilon\to 0^+$, we have 
		$M_2e^{-\bar{\lambda}(a)y}\le \Phi_1(y)$ on $[a,+\infty)$.  
		Consequently, there exist $\underline{C}>0, {\bar C}>0$(dependent on $a$), such that 
		$$\underline{C} e^{-\bar\lambda(a)y}\le\Phi_1(y)\le \bar C e^{-\underline{\lambda}(a)y}, \quad \forall y\ge0.$$
		Noting that $\overline{\lambda}(a)$ and $\underline{\lambda}(a)$ are continuous on $[a_0,+\infty)$ with  $\overline{\lambda}(+\infty)=\underline{\lambda}(+\infty)=\lambda_+$. Therefore, for any sufficiently small $\eta>0$, there exists $a>a_0$  such that $\lambda_+-\eta\le\underline{\lambda}(a)\le\overline{\lambda}(a)\le \lambda_++\eta $. This implies the first inequality in \eqref{est-decay-rate} is valid.

		For (b)(ii), suppose $(\tilde\Lambda, \tilde\Phi)$ is an eigenpair of \eqref{eq:pev0}. By the first assertion of the Proposition, $\tilde\Lambda \in [\Lambda_1,\infty)$. On the one hand, if $\tilde\Lambda = \Lambda_1$, 
		then we can immediately conclude by the fact that  $\Lambda_1$ is simple {(by \cite[Proposition 1.1(ii)]{Berestycki2014generalizations}). }

		On the other hand, if $\tilde\Lambda >\Lambda_1$, then one can prove that 
		$\tilde\Phi(y) \sim e^{-\sqrt{\tilde\Lambda-g(+\infty)} y+o(y)}$ as $y \to +\infty$. It then follows that $\tilde\Phi(y) /\Phi_1(y) \to 0$ as $y \to +\infty$. By repeating the argument, we also obtain $\tilde\Phi(y) /\Phi_1(y) \to 0$ as $y \to -\infty$.
		We can then touch $\Phi_1(y)$ from below with $k\tilde\Phi(y)$ to obtain, from the strong maximum principle, that $\Phi_1(x) = k \tilde\Phi(x)$ for some $k>0$. In this case $\tilde\Lambda = \Lambda_1$, a contradiction.  
	
For (c), fix an arbitrary $\eta>0$ we choose, for each $k \in\mathbb{N}$, a continuous function such that
 $$\begin{cases}
		g_\eta^k(x)=g(x), & |x|\ge k+1,\\
		g(x)\le g_\eta^k(x)\le g(x)+\eta, & l\le |x| \le k+1,\\
		g_\eta^k(x)=g(x)+\eta,& |x|\le k.
	\end{cases}$$
If $\Lambda_1(g^k_\eta) > \Lambda_1(g)$ for some $k$, we are done. Suppose to the contrary that $\Lambda_1(g^k_\eta) \equiv \Lambda_1(g)$ for all $k$, and let  $\Phi_k \in C^2_{loc}$ be a positive eigenfunction of $\Lambda_1(g^k_\eta)$.
By Harnack inequality, there exists a positive function $C(R)$ independent of $k$ such that
$$
\frac{1}{C(R)} \leq \frac{\Phi_k(x)}{\Phi_k(0)} \leq C(R) \quad \text{ for } k\in \mathbb{N},~ |x|\leq R.
$$
Normalizing by $\Phi_k(0) = 1$, we see that $\{\Phi_k\}$ is bounded in $C^2([-R,R])$ for each $R$. It follows that (up to a subsequence) $\Phi_k$ converges in $C^1_{loc}(\mathbb{R})$ to a positive eigenfunction $\tilde\Phi \in C^2_{loc}(\mathbb{R})$ satisfying
$$
\tilde\Phi'' + (g(x) + \eta) \tilde\Phi = \Lambda_1(g) \tilde\Phi \quad \text{ in }\mathbb{R}.
$$
By assertion (a), it follows that $\Lambda_1(g) \geq  \Lambda_1(g + \eta)$, which is impossible since $\Lambda_1(g + \eta) = \Lambda_1(g) + \eta$.
	\end{proof}
	\section{Proof of Lemmas \ref{cor:subsolr} and  \ref{cor:supersolr}}\label{corsubsup}
	Following the same procedure in  		
	\cite[Lemmas 2.9 and 2.10]{Imbert2017flux}, it suffices to prove Lemma \ref{cor:supersolr} without weak continuity condition and then show the finiteness of $\overline p_+$ in  Lemma \ref{cor:subsolr} with weak continuity condition.
		\begin{proof}[Proof of Lemma \ref{cor:supersolr}] 
		By the definition of $\underline{p}_+$, we see that $\underline{p}_+\ge0$. For any sufficiently small $\epsilon>0$, there exists $r_\epsilon\in(0,\epsilon)$ such that 
		$$\overline \rho(s)\ge \varphi(s)+(\underline p_+-\epsilon)(s-c_1) ~\text{ for } 0\le s-c_1\le r_\epsilon$$
		and there exists $s_\epsilon\in(c_1, c_1+\frac{r_\epsilon}{2})$ such that
		$$\overline{\rho}(s_\epsilon)<\varphi(s_\epsilon)+(\underline p_++\epsilon)(s_\epsilon-c_1).$$
		Now construct a smooth function $\Psi:\mathbb{R}\to[-1,0]$ such that 
		$$\Psi(s)=\begin{cases}
			0 &\text{ for } s\in (-1/2, 1/2),\\
			-1 & \text{ for } s\not \in (-1,+1)
		\end{cases}$$
		and define
		$$\Phi(s)=\varphi(s)+2\epsilon \Psi_{r_\epsilon}(s)+\begin{cases}
			(\underline{p}_++\epsilon)(s-c_1)& \text{ if } s\in U\cap(c_1,+\infty)\\
			0& \text{ if } s\in U\cap (-\infty,c_1]
		\end{cases}$$
		with $\Psi_{r_\epsilon}(s)=r_\epsilon \Psi((s-c_1)/r_\epsilon)$. It then follows that $\Phi(c_1)= \varphi(c_1)\textcolor{blue}{=}\overline{\rho}(c_1)$
		and
		$$\begin{cases}
			\Phi(c_1+r_\epsilon)=\varphi(c_1+r_\epsilon)+(\underline{p}_+-\epsilon)r_\epsilon\le \overline{\rho}(c_1+r_\epsilon),\\
			\Phi(s_\epsilon)=\varphi(s_\epsilon)+(\underline p_++\epsilon)(s_\epsilon-c_1)>\overline{\rho}(s_\epsilon).
		\end{cases}$$
		This implies that there exists a point $\bar s_\epsilon\in(c_1,c_1+r_\epsilon)$ such that $\overline\rho-\Phi$ attains a local minimum at $\bar s_\epsilon$. Therefore,  by the definition of viscosity supersolution and $H(\cdot,p)$ is convex in $p$, we obtain
		$$\Phi(\bar s_\epsilon)+\sup_{s\in(c_1,c_1+r_\epsilon)}H(s,\Phi'(\bar s_\epsilon))\ge\overline{\rho}(\bar s_\epsilon)+H^*(\bar s_\epsilon, \Phi'(\bar s_\epsilon))\ge0,$$
		which yields		
		$$\Phi(\bar s_\epsilon)+\sup_{s\in(c_1,c_1+r_\epsilon)}H(s,\varphi'(\bar s_\epsilon)+2\epsilon\Psi'_{r_\epsilon}(\bar s_\epsilon)+\underline{p}_++\epsilon)\ge0.$$
		Letting $\epsilon\to 0^+$, we reach
		$$\varphi(c_1)+H(c_1+,\varphi'(c_1)+\underline{p}_+)\ge0.$$
		Now the conclusion follows immediately from the fact that $\bar \rho (c_1)\textcolor{blue}{=}\varphi(c_1).$		
	\end{proof}
\begin{proof}[Proof of Lemma \ref{cor:subsolr}]
	We only show that $\bar p_+>-\infty$. Without loss of generality, we might assume that $\varphi(c_1)=\underline\rho(c_1)>0$. Suppose by contradiction that $\bar p_+=-\infty$, then there exists $p_n\to-\infty$ and $r_n>0$ such that $$\varphi(s)+p_n(s-c_1)\ge \underline{\rho}(s)\, \text{ for }\, 0\le s-c_1<r_n.$$
	Modifying $\varphi$ if necessary (e.g.,  $\varphi+(s-c_1)^2$), we could further assume that
	\begin{equation}\label{eq: pn}
	\varphi(s)+p_n(s-c_1)> \underline{\rho}(s)\, \text{ for }\, 0< s-c_1\le r_n. 
	\end{equation}
For fixed $n$, since $\underline\rho$ satisfies the weak continuity condition \eqref{eq:weakcontrho}, it then follows that there exists $s_{m}\in(c_1,c_1+r_n)$ such that $s_m\to c_1$ and $\underline\rho(s_m)\to \underline{\rho}(c_1)$ as $m\to+\infty$.
	Define 
	 $$\Psi_m(s)=\varphi(s)+p_n(s-c_1)+\frac{(s_m-c_1)^2}{s-c_1}, \quad s>c_1.$$
	 For each $m\in \mathbb{N}$, there exists  $\hat s_m\in(c_1, c_1+r_n]$ such that $\Psi_m-\underline{\rho}$ attains the minimum at $\hat s_m$. Then
	 \begin{eqnarray}\label{eq: psim}
	 o(1)= \Psi_m(s_m)-\underline{\rho}(s_m)\ge\Psi_m(\hat s_m)-\underline{\rho}(\hat s_m)\ge
	 \varphi(\hat s_m)+p_n(\hat s_m-c_1)-\underline{\rho}(\hat s_m).
	 \end{eqnarray}
	 Suppose $\hat s_m\to s_0\not =c_1$ (up to a subsequence) as $m\to\infty$.  Then letting $m\to+\infty$ in \eqref{eq: psim}, we have
	 $$0\ge \varphi(s_0)+p_n(s_0-c_1)-\liminf_{m\to\infty}\underline\rho(\hat s_m)>0,$$ 
	 where the second inequality follows from \eqref{eq: pn} and the upper semicontinuity of $\underline{\rho}$. This is a contradiction. Therefore, we conclude that $\hat s_m\to c_1$ and $\underline\rho(\hat s_m)\to\underline{\rho}(c_1)>0$ as $m\to\infty$.  Now we might assume $\underline{\rho}(\hat s_m)>0$ for each $m\in\mathbb{N}$, by the definition of viscosity subsolution, we obtain 
	 $$\underline{\rho}(\hat s_m)+H_*\left(\hat s_m, \varphi'(\hat s_m)+p_n-\frac{(s_m-c_1)^2}{ (\hat s_m-c_1)^2}\right)\le0.$$
	 Note that $\inf\limits_{s\in(c_1,\hat s_m]} H(s,p)\le H_*(\hat s_m,p)$. Then we pass to the limit as $m\to+\infty$ in the inequality above and get
	 $$\underline{\rho}(c_1)+ H(c_1+, \varphi'(c_1+)+p^0_n)\le 0,$$
where $p^0_n=p_n-\limsup\limits_{m\to\infty}\frac{(s_m-c_1)^2}{ (\hat s_m-c_1)^2}\in[-\infty, 0]$.	 It then follows from $\liminf_{p\to-\infty}H(c_1+,p)\ge0$ that $p^0_n>-\infty$ and $p^0_n$ is bounded from below by a constant $C$ which only depends on $H(c_1+,p)$, $\underline{\rho}(c_1)$ and $\varphi'(c_1+)$. But this also implies $p_n\ge  C$, which leads to a contradiction. This completes the proof of the finiteness of $\overline{p}_+$.
 \end{proof}

	\section{Comparison Principle for problem with Kirchhoff condition}\label{sec:comp}

{The comparison principle for FL-solutions was first proved by Imbert and Monneau \cite{Imbert2017flux,Imbert2017quasi}. Subsequently, Lions and Souganidis gave an alternative proof by connecting it to the Kirchoff junction condition \cite{LionsSouganidis2016,Lions2017well}. We combine the arguments of the latter and of \cite{Lam2022asymptotic} to prove a comparison result that allows for solutions that grows superlinearly.}

	Let $\mathcal{P}=\{c_i\}_{i=1}^n$ for some $0<c_1<...<c_n$ and $B_i\in \mathbb{R}$ for all $i$.  We establish a comparison principle for viscosity sub- and supersolutions of the Hamilton-Jacobi equation 
	\begin{equation}\label{eq:hjeapp}
		\min\{\rho, \rho+ H(s,\rho')\} = 0 \quad \text{ in }(0,\infty)\setminus \{c_i\}
	\end{equation}
	with the following Kirchhoff junction condition at $c_i$
	\begin{equation}\label{eq:kirapp}
		\rho'(c_i-) - \rho'(c_i+) - B_i = 0 \quad {\text{ for } i=1,...,n},
	\end{equation}
{and boundary conditions $\rho(0)=0$ and $\rho (s)/s \to +\infty$ as $s\to +\infty$. 
	Here,} we assume the following for the Hamiltonian function $H(s,p)$. 
	\begin{description}
		\item[{\rm (A1)}] For any given $L>0$ and each $s_0 \in (\tfrac1L,L) \setminus \{c_i\}_{i=1}^n$, there exists $\delta_0=\delta_0(L)$ and $h_0 \in \{\pm 1\}$ such that
		$$
		H^*(s,p) - H_*(s',p) \leq \omega_L(|s-s'|(1+|p|) )
		$$
		for all $s,s'$ such that $0<|s-s_0|+|s'-s_0|<\delta_0$ and $(s'-s)h_0 <0$. Here  $\omega_L:[0,\infty) \to [0,\infty)$ is a modulus of continuity for each $L>0$, i.e. it is continuous with $\omega_L(0) = 0$, and $H^*$ (resp., $H_*$) is the upper (resp., lower) semicontinuous envelope of $H$ with respect to the first variable, that is,
		\begin{equation}\label{eq: Ulsc}
			H^*(s,p)=\limsup_{s'\to s}H(s',p)\, \quad (\text{resp.,} \quad  H_*(s,p)=\liminf_{s'\to s}H(s',p)). 
		\end{equation}

		\item[{\rm (A2)}] $p\mapsto H(s,p)$ is convex, {$\displaystyle\inf_{s>0}H(s,0)>0$} and $\lim\limits_{|p|\to\infty} \left[\inf\limits_{s \in K} H(s,p)\right] \to \infty$ for each compact set $K \subset (0,\infty)$.
	
			\item[{\rm (A3)}]  The one-sided limits 
		$H(c_i\pm,p)$ are well defined.
		
		\item[{\rm (A4)}] There exists $\overline s > \max\{c_i\}_{i=1}^n$ such that $H(s,p) =   -sp +\tilde{H}(s,p) + R(s)$ for $s \geq \overline s$  and such that 
  $\tilde H(s,p)$ is non-increasing in $s \in [\overline s,\infty)$.  Moreover, $R \in L^\infty$ is locally monotone in $s \in [\overline s,\infty)$. 
\begin{remark}
    The definition of local monotonicity of $R(s)$ is stated  in (H3') of Subsection \ref{extension};  see also \cite{Chen2008viscosity}. Note also that (A1) implies that $H(s,p)$ is locally monotone as a function of two variables.  
\end{remark}
  
		
	\end{description}
	\begin{remark}
		For our purpose, we will take $H(s,p) = -sp + p^2 + R(s)$, where $R$ equals to positive constants on $(0,c_1)$ and on $(c_1,\infty)$. It is obvious that (A1)-(A4) hold.
	\end{remark}
	
	To define what it means by a viscosity solution to \eqref{eq:hjeapp}--\eqref{eq:kirapp}, we recall the space of piecewise $C^1$ test functions
	$$
	C^1_{pw}= C^0_{loc}((0,\infty))\cap C^1((0,c_1])\cap C^1([c_1,c_2])\cap ... \cap C^1([c_{n-1},c_n]) \cap C^1_{loc}([c_n,\infty)).
	$$ and use the notations in \eqref{eq: Ulsc}.
	
	\begin{definition}
		\begin{itemize}
			\item[{\rm(a)}] We say that $\underline\rho$ is a viscosity subsolution to \eqref{eq:hjeapp}--\eqref{eq:kirapp} if $\underline\rho$ is upper semicontinuous on $(0,+\infty)$ and it satisfies, in the viscosity sense, 
			\begin{equation}
				\begin{cases}
					\min\{\rho, \rho + H(s,\rho')\} \leq 0 \quad \text{ for }s \in (0,\infty)\setminus\mathcal{P},\\
					\min\{\rho(c_i), \min \{\rho'(c_i-)-\rho'(c_i+) - B_i,\rho(c_i) + H(c_i\pm,\rho'(c_i\pm)\} \leq 0 \text{ for each }1\leq i\leq n.
				\end{cases}
			\end{equation}
			i.e. whenever $\underline\rho - \psi$ has a strict local maximum point at $s_0$ for some $\psi \in C^1_{pw}$ and $\underline\rho(s_0)>0$, we have 
			$$
			\underline\rho(s_0) + H_*(s_0,\psi'(s_0))\leq 0 \quad \text{ if }s_0 \in (0,\infty) \setminus \mathcal{P}, 
			$$
			
			$$
			\min \{\psi'(c_i-)-\psi'(c_i+) - B_i,\underline\rho(c_i) + H(c_i\pm,\psi'(c_i\pm)\}\leq 0 \quad \text{ if }s_0 =c_i.
			$$
			
			\item[{\rm(b)}] We say that $\overline\rho$ is a viscosity supersolution to \eqref{eq:hjeapp}--\eqref{eq:kirapp} if $\overline\rho$ is lower semicontinuous on $(0,+\infty)$  and it satisfies, in the viscosity sense, 
			\begin{equation}
				\begin{cases}
					\min\{\rho, \rho + H(s,\rho')\} \geq 0 \quad \text{ for }s \in (0,\infty)\setminus\mathcal{P},\\
					\min\{\rho(c_i), \max \{\rho'(c_i-)-\rho'(c_i+) - B_i,\rho(c_i) + H(c_i\pm,\rho'(c_i\pm))\}\} \geq 0 \text{ for each }i.
				\end{cases}
			\end{equation}
			i.e. $\overline\rho\geq 0$ for all $s > 0$ and whenever $\underline\rho - \psi$ has a strict local minimum point at $s_0$ for some $\psi \in C^1_{pw}$, we have
			$$
			\overline\rho(s_0) + H^*(s_0,\psi'(s_0))\geq 0 \quad \text{ if }s_0 \in (0,\infty) \setminus \mathcal{P}, 
			$$
			$$
			\max \{\psi'(c_i-)-\psi'(c_i+) - B_i,\overline\rho(c_i) + H(c_i\pm,\psi'(c_i\pm)\}\geq 0 \quad \text{ if }s_0 =c_i.
			$$

			\item[{\rm(c)}] We say that $\overline\rho$ is a viscosity solution to \eqref{eq:hjeapp} --\eqref{eq:kirapp}, if it is both a viscosity subsolution and viscosity supersolution.
		\end{itemize}
	\end{definition}

	\begin{remark}
		The above setting includes the case with general $r(x,t)$ with infinitely many shifts (in that case $\tilde{H}_i(p) = p^2$ and $R$ is locally monotone except possibly at $c_i$), as well as the case when there is finitely many shifts, but periodic homogenization in between (in that case $R\equiv 0$).\end{remark}
	
	\begin{theorem}\label{thm:comparison}
		Let $\underline\rho$ and $\overline\rho$ be, respectively, 
  viscosity sub- and supersolutions of \eqref{eq:hjeapp}--\eqref{eq:kirapp}, such that
		$$
		\underline\rho(0) \leq  0 \leq \overline\rho(0), \quad \underline\rho(s) <\infty \quad \text{ for all }s\geq 0, \quad \text{ and }\quad \frac{\overline\rho(s)}{s} \to +\infty \quad \text{ as }s\to +\infty.
		$$
		Then $\underline\rho(s) \leq \overline\rho(s)$ in $[0,+\infty)$.
	\end{theorem}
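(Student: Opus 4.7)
The proof proceeds by contradiction, combining the Crandall--Ishii doubling-of-variables method with the Lions--Souganidis treatment of the Kirchhoff junction condition, adapted to accommodate the superlinear growth of $\overline\rho$ in the spirit of \cite{Lam2022asymptotic}. Assume for contradiction that $M_0 := \sup_{s \ge 0}(\underline\rho(s) - \overline\rho(s)) > 0$. The boundary condition $\underline\rho(0) \le 0 \le \overline\rho(0)$, the finiteness of $\underline\rho$, and the superlinear growth of $\overline\rho$ allow one to localize the supremum: after subtracting a small linear penalty $\kappa s$ from $\underline\rho$, the perturbed sup $M_\kappa := \sup(\underline\rho - \overline\rho - \kappa s)$ is strictly positive (with $M_\kappa \to M_0$ as $\kappa \to 0^+$) and attained at some $s^*_\kappa \in (0, S_\kappa)$ with $\underline\rho(s^*_\kappa) > 0$. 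The function $\underline\rho - \kappa s$ is still a viscosity subsolution up to a Hamiltonian perturbation of order $O(\kappa)$, thanks to coercivity (A2).

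Next, double the variables: set
$$\Phi_\varepsilon(s, s') = \underline\rho(s) - \overline\rho(s') - \frac{(s - s')^2}{2\varepsilon} - \frac{\kappa}{2}(s + s'),$$
whose maximizer $(s_\varepsilon, s'_\varepsilon)$ on $[0, S_\kappa]^2$ satisfies, by the standard argument, $s_\varepsilon, s'_\varepsilon \to \hat s \in (0, S_\kappa]$, $|s_\varepsilon - s'_\varepsilon|^2/\varepsilon \to 0$, and $\Phi_\varepsilon(s_\varepsilon, s'_\varepsilon) \to M_\kappa$. Coercivity in (A2), together with the sub/supersolution inequalities (valid since $\underline\rho(s_\varepsilon) \to \underline\rho(\hat s) \ge M_\kappa > 0$), forces the slope $p_\varepsilon := (s_\varepsilon - s'_\varepsilon)/\varepsilon + \kappa/2$ to stay bounded. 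If $\hat s \notin \mathcal{P}$, then for small $\varepsilon$ both $s_\varepsilon$ and $s'_\varepsilon$ avoid the junctions, and
$$\underline\rho(s_\varepsilon) + H_*(s_\varepsilon, p_\varepsilon) \le O(\kappa), \qquad \overline\rho(s'_\varepsilon) + H^*(s'_\varepsilon, p_\varepsilon) \ge 0$$
combined with hypothesis (A1) (choosing $h_0$ according to the sign of $s_\varepsilon - s'_\varepsilon$) gives $M_\kappa \le \omega_L\!\bigl(|s_\varepsilon - s'_\varepsilon|(1 + |p_\varepsilon|)\bigr) + O(\kappa)$, which tends to $O(\kappa)$ as $\varepsilon \to 0$. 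Sending $\kappa \to 0$ yields $M_0 \le 0$, a contradiction. When $\hat s > \overline s$, hypothesis (A4) plays the role of (A1): the monotonicity of $\tilde H$ in $s$ on $[\overline s, \infty)$ and local monotonicity of $R$ close the comparison for large $s$.

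The main obstacle is the case $\hat s = c_i$ for some $i$, where the Kirchhoff junction condition must be used. Following Lions--Souganidis, the plan is to invoke piecewise-$C^1$ vertex test functions $\psi \in C^1_{pw}$ whose one-sided slopes $p_\pm$ satisfy the Kirchhoff relation $p_- - p_+ = B_i$, and to augment the doubling term with a linear tilt of the form $\sigma\bigl[(s - c_i)_+ - (s' - c_i)_+\bigr]$ to disentangle the one-sided behavior of $\underline\rho$ and $\overline\rho$ near $c_i$. A finite subcase analysis on the signs of $s_\varepsilon - c_i$ and $s'_\varepsilon - c_i$ is carried out; in each subcase, the junction inequalities
$$\min\{\underline\rho(c_i) + H(c_i\pm, \psi'(c_i\pm)),~ \psi'(c_i-) - \psi'(c_i+) - B_i\} \le 0,$$
$$\max\{\overline\rho(c_i) + H(c_i\pm, \psi'(c_i\pm)),~ \psi'(c_i-) - \psi'(c_i+) - B_i\} \ge 0$$
lead, by convexity and coercivity (A2)--(A3) together with the bound on $p_\varepsilon$, either directly to $M_0 \le 0$ (on the Hamilton--Jacobi branch) or to a violation of the Kirchhoff slope relation (which is precluded by the very construction of $\psi$). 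The delicate part, and the technical heart of the Lions--Souganidis strategy, is coordinating the sign of the tilt $\sigma$ with the approach sides so that, in every subcase, the Kirchhoff branch is deactivated for the supersolution while the Hamilton--Jacobi branch is deactivated for the subsolution (or vice versa), thereby forcing a comparison of the one-sided Hamiltonians $H(c_i\pm, \cdot)$ at compatible slopes and closing the contradiction.
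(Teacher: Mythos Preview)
Your outline captures the overall architecture (contradiction, doubling, Lions--Souganidis at the junction), but two of its load-bearing steps do not go through as written.

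\textbf{Localization at infinity.} You claim that ``the finiteness of $\underline\rho$ and the superlinear growth of $\overline\rho$ allow one to localize the supremum'' after subtracting $\kappa s$. This is not justified: the hypotheses only say $\underline\rho(s)<\infty$ pointwise, not that $\underline\rho$ has at most linear growth. Nothing prevents $\underline\rho(s)=s^3$ while $\overline\rho(s)=s^2$; then $\underline\rho-\overline\rho-\kappa s\to+\infty$ and the supremum is never attained. The paper handles this differently (its Step~\#1): it first observes that $\underline\rho$ is locally Lipschitz (via Lemma~\ref{cor:im1rho'}) and then, using (A4), replaces $\underline\rho$ on $[s_k,\infty)$ by an affine extension $\underline\rho_k$ built from a differentiability point $s_k$ where $R(s_k)\to\limsup R$. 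This yields a genuinely linearly growing subsolution, so that $\underline\rho_k-\overline\rho\to-\infty$. Your linear penalty $\kappa s$ cannot substitute for this construction.

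\textbf{Use of (A1).} Hypothesis (A1) is \emph{directional}: for each $s_0$ it provides a fixed $h_0\in\{\pm1\}$ and guarantees $H^*(s,p)-H_*(s',p)\le\omega_L(\cdots)$ only when $(s'-s)h_0<0$. You cannot ``choose $h_0$ according to the sign of $s_\varepsilon-s'_\varepsilon$'', because in the standard doubling the sign of $s_\varepsilon-s'_\varepsilon$ is not controlled. The paper's remedy (Steps~\#3--\#5) is to shift the subsolution variable by $\alpha^{-1/2}h_0$ in the doubled functional, which forces $(t_1-\hat s_1)h_0<0$ for large $\alpha$ and makes (A1) applicable. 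Without this shift the comparison of $H^*(t_1,p_\varepsilon)$ and $H_*(s_\varepsilon,p_\varepsilon)$ is not available.

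\textbf{Junction step.} Your description with a tilt $\sigma[(s-c_i)_+-(s'-c_i)_+]$ and a ``finite subcase analysis'' is too schematic to assess; in particular you do not explain how to coordinate the tilt with the two-sided Hamiltonians so that the Kirchhoff branch and the Hamilton--Jacobi branch never both survive. The paper avoids any doubling at the junction: once the maximum of $\lambda\underline\rho-\overline\rho$ sits at $c_i$, it extracts the one-sided critical slopes $p^*_\pm$ of $\underline\rho$ and $q_\pm$ of $\overline\rho$ (Steps~\#6--\#9), verifies the inequalities $a+H(c_i\pm,\cdot)\le0$, $b+H(c_i\pm,\cdot)\ge0$ and the min/max Kirchhoff inequalities on the appropriate half-lines, and then invokes the abstract Lions--Souganidis Lemma~\ref{lem:lions} directly. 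This critical-slope reduction is the concrete mechanism you are missing.

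Finally, note that the paper uses the multiplicative parameter $\lambda\in(0,1)$ (i.e.\ $W=\lambda\underline\rho-\overline\rho$) rather than your additive penalty; combined with convexity of $H$ in $p$, this produces the clean splitting $\lambda H_*(\hat s_1,p/\lambda+\cdots)+(1-\lambda)H_*(\hat s_1,\cdot)\ge H_*(\hat s_1,p)$ used in Step~\#4, with an error controlled by $(1-\lambda)\sup|H|$ on a fixed compact set.
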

 \begin{remark}
     {The above theorem directly implies the corresponding comparison principle for KL-super/subsolutions (see Proposition \ref{prop:2.10}). This is because $\overline\rho$ and $\underline\rho$ being a pair FL-super/subsolution (for an arbitrarily given $A \in \mathbb{R}$) implies the corresponding super/subsolution property with Kirchhoff junction condition (for some $B = B(A)$); See Subsection \ref{sec:5.3}.}
 \end{remark}
	\begin{proof}[Proof of Theorem \ref{thm:comparison}]
{First of all, we may assume without loss that $\underline\rho$ is nonnegative, $\underline\rho(0)=0$ and $\underline\rho \in {\rm Lip}_{loc}([0,\infty))$. This can be achieved by replacing $\underline\rho$ by $\max\{0,\underline\rho\}$. Since $0$ and  $\underline\rho$ (and hence also $\max\{\underline\rho,0\}$ is automatically a viscosity subsolution (in Ishii sense) to $\min\{\rho,\rho+ H(s,\rho')\}=0$, Lemma \ref{cor:subsolr} is applicable and it is therefore locally Lipschitz continuous.}
 
		Suppose  to the contrary that
		\begin{equation}\label{eq:contt}
			\sup_{s \geq 0} [\underline\rho(s) - \overline\rho(s)] >0.
		\end{equation}
		
		\smallskip
		
		\noindent {\bf Step \#1.}  
	We may assume without loss of generality that $\underline\rho(s) -\overline\rho(s) \to -\infty$ as $s \to +\infty$. 
	
 If $\limsup_{s \to \infty}\frac{\underline\rho(s)}{s} <\infty$, then we are done. Otherwise, we proceed as in \cite[Proposition 2.11]{Lam2022asymptotic}. First, 
	observe that $\underline\rho \in {\rm Lip}_{loc}((0,\infty))$. By Rademacher's theorem, it is differentiable in some $[0,\infty) \setminus \mathcal{S}$ where $\mathcal{S}$ has zero Lebesgue measure. Hence, we may choose $s_k \in [\overline s,\infty)$, $s_k \to \infty$ such that $\underline\rho$ is differentiable at $s_k$, and 
	$$
	\inf_k \underline\rho(s_k)>0  \quad \text{ and }\quad R(s_k) \to \limsup_{s \to \infty} R(s).
	$$
{Note that the latter is a consequence of 
local monotonicity of $R$ thanks to (A4).}
Next, define
	$$
	\underline{\rho}_k(s):= \begin{cases}
		\underline\rho(s) - \nu_k &\text{ for }0 \leq s \leq s_k,\\
		\underline\rho(s_k) - \nu_k + \underline\rho'(s_k)(s-s_k) &\text{ for }s > s_k,
	\end{cases}
	$$
	where $\nu_k = \sup_{[s_k,\infty)} R-R(s_k)$ (note that $\nu_k \to 0$). Observe that $\underline\rho_k$ is a viscosity subsolution in $[0,\infty)$ with linear growth as $s\to\infty$. Indeed, $\underline\rho_k$ is viscosity subsolution of \eqref{eq:hjeapp} in $[0,s_k)$ on the one hand, and classical subsolution of \eqref{eq:hjeapp} in $[s_k,\infty)$ on the other hand, since
	\begin{align*}
		\underline\rho_k +  H(s,\underline\rho'_k) &= \underline\rho_k -s \underline\rho'_k + \tilde H(s,\underline\rho'_k) + R(s)\\
		&= [a_k - \nu_k + b_k  (s-s_k)] - sb_k + \tilde{H}(s,b_k) + R(s)\\
		&\le a_k - \nu_k -s_kb_k  +\tilde{H}(s_k,b_k) + R(s)\\
		&=(a_k - s_k b_k + \tilde{H}(s_k,b_k) + R(s_k)) + (R(s) - R(s_k) - \nu_k)\\
		&\le\underline{\rho}(s_k)+H(s_k,\underline{\rho}'(s_k)) \leq 0, 
	\end{align*}
	where we adopted the notation $a_k = \underline\rho(s_k)$, $b_k = \underline\rho'(s_k)$.
	
	We can then replace $\underline\rho$ by $\underline\rho_k$, if necessary. Note that \eqref{eq:contt} still holds provided $k$ is sufficiently large, since $\nu_k \to 0$ and $s_k \to +\infty$.

	In the rest of the proof, we will show the comparison result is valid, i.e. 
	$$
	\max\{\underline\rho_k(s),0 \}\leq \overline\rho(s) \quad \text{ in }[0,\infty), \quad \text{ for all sufficiently large }k.
	$$  
	Granted, then we can take $k \to \infty$ to deduce that $\underline\rho\leq \overline\rho.$

	\medskip
	
	\noindent {\bf Step \#2.} For $\lambda \in (0,1)$, define $W (s) = \lambda \underline\rho(s) - \overline\rho(s)$. Then choose $\lambda \nearrow 1$, $0 < s_0 < \overline{s}_0$ such that 
	\begin{equation}\label{eq:WW}
		\eta_0:= W(s_0) =\max_{s\in[0,\infty)}W(s) >  \max\left\{\sup_{[\overline{s}_0-1,\infty)}W, (1-\lambda)\sup_{s \in [0,\bar{s}_0] \atop |p|\leq 2|\bar{s}_0|} |H(s,p)|\right\} 
	\end{equation}
	{For given $1\le i\le n$, we consider two cases . Either (i) there is a sequence $\lambda_j \nearrow 1$ such that $s_0 \neq c_i$ for all $j$, or (ii) there exists a sequence $\lambda_j \nearrow 1$ such that $s_0 = c_i$ for all $j$. 
		We first consider case (i), $s_0 \neq c_i$.}

	\medskip
	
	\noindent {\bf Step \#3.} 
	Next, define
	\begin{equation}
	\Psi_\alpha(s,t) = \lambda\underline\rho(s+\alpha^{-1/2}h_0) - \overline\rho(t) - \frac{\alpha}{2}|s-t|^2 - \frac{1-\lambda}{2}|s-s_0|^2, 
	\end{equation}
	{where $h_0$ is given in (A1).}
	\begin{claim}\label{claim:1}
		There exists $\bar \alpha>0$ such that if $\alpha>\bar\alpha$, then the following statements hold.
		\begin{itemize}
			\item[{\rm(i)}] $\Psi_\alpha$ has an interior local maximum $(s_1,t_1)$ in $(\frac{s_0}{2},\frac{\bar{s}_0+s_0}{2})\times (0, \bar{s}_0)$.
			\item[{\rm(ii)}] $\Psi_\alpha(s_1,t_1) \geq \Psi_\alpha(s_0,s_0)= \eta_0+o(1)>0$.
			\item[{\rm(iii)}] $\alpha|s_1-t_1|^2 \to 0$ as $\alpha \to \infty$.
			\item[{\rm(iv)}] $(s_1,t_1) \to (s_0,s_0)$ as $\alpha \to \infty.$
		\end{itemize}
	\end{claim}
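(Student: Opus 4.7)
The plan is to run the classical Crandall--Ishii--Lions doubling-of-variables trick. The shift $\alpha^{-1/2}h_0$ in the argument of $\underline\rho$ is engineered so that hypothesis (A1) can later be invoked at the maximizer when deriving the contradiction from the sub/supersolution inequalities; for proving Claim~\ref{claim:1} itself the shift is innocuous, owing to the Lipschitz continuity of $\underline\rho$ established in Step 1.

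I would first show that $\Psi_\alpha$ attains its supremum. Since $\underline\rho$ has at most linear growth (after the Step 1 modification) while $\overline\rho$ is superlinear, a short estimate after maximizing the quadratic $-\tfrac{\alpha}{2}|s-t|^2 + \lambda L|s-t|$ in $|s-t|$ yields
\begin{equation*}
\Psi_\alpha(s,t) \leq -\overline\rho(t) + \lambda L\, t + \tfrac{\lambda^2 L^2}{2\alpha} + C \xrightarrow{\;t\to\infty\;} -\infty,
\end{equation*}
so the upper semicontinuity of $\Psi_\alpha$ produces a maximizer $(s_1,t_1)$ uniformly bounded in $\alpha$. For (ii), evaluating at the diagonal gives $\Psi_\alpha(s_0,s_0) = \lambda\underline\rho(s_0+\alpha^{-1/2}h_0) - \overline\rho(s_0) \to W(s_0) = \eta_0$ by Lipschitz continuity of $\underline\rho$, and maximality then gives $\Psi_\alpha(s_1,t_1) \geq \eta_0 + o(1) > 0$.

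For (iii) and (iv), the inequality $\Psi_\alpha(s_1,t_1) \geq \Psi_\alpha(t_1,t_1)$ rearranges to
\begin{equation*}
\tfrac{\alpha}{2}|s_1-t_1|^2 \leq \lambda\bigl[\underline\rho(s_1+\alpha^{-1/2}h_0)-\underline\rho(t_1+\alpha^{-1/2}h_0)\bigr] + \tfrac{1-\lambda}{2}\bigl(|t_1-s_0|^2 - |s_1-s_0|^2\bigr) \leq \lambda L|s_1-t_1| + C,
\end{equation*}
so $|s_1-t_1|=O(\alpha^{-1/2})$ and $\alpha|s_1-t_1|^2$ is uniformly bounded. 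Extract a subsequence with $(s_1,t_1)\to(\tilde s,\tilde s)$ and take $\limsup$ in $\eta_0 + o(1) \leq \Psi_\alpha(s_1,t_1)$ using upper semicontinuity of $\underline\rho$ and lower semicontinuity of $\overline\rho$:
\begin{equation*}
\eta_0 \leq W(\tilde s) - \liminf \tfrac{\alpha}{2}|s_1-t_1|^2 - \tfrac{1-\lambda}{2}|\tilde s - s_0|^2 \leq \eta_0 - \liminf \tfrac{\alpha}{2}|s_1-t_1|^2 - \tfrac{1-\lambda}{2}|\tilde s-s_0|^2,
\end{equation*}
forcing $\liminf \alpha|s_1-t_1|^2 = 0$ and $\tilde s = s_0$. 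Uniqueness of the limit upgrades this to full-sequence convergence $(s_1,t_1)\to(s_0,s_0)$; reapplying the same limiting inequality along any subsequence realizing $\limsup \alpha|s_1-t_1|^2$ then forces the full limit $\alpha|s_1-t_1|^2\to 0$. Statement (i) is automatic, since $s_0$ lies interior to $(s_0/2,(\bar s_0 + s_0)/2)\times(0,\bar s_0)$ (using $0<s_0<\bar s_0$), and (iv) places $(s_1,t_1)$ inside this open set for all $\alpha$ sufficiently large.

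The main technical obstacle will be the uniform-in-$\alpha$ control on the maximizer. The superlinear growth of $\overline\rho$ (preventing escape to $+\infty$), the linear-growth modification of $\underline\rho$ from Step 1 (making $\Psi_\alpha$ diverge to $-\infty$ at infinity), the strict inequality $\eta_0 > \sup_{[\bar s_0-1,\infty)}W$ in \eqref{eq:WW} (so that the limit is not an alternative maximum of $W$ on the far region), and the quadratic penalty $-\tfrac{1-\lambda}{2}|s-s_0|^2$ (pinning the limit to the chosen $s_0$) must all cooperate; arranging this interplay is precisely what the careful choice of $\lambda$, $s_0$ and $\bar s_0$ in Step 2 was designed to enable.
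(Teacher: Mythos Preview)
Your argument is correct and follows the standard doubling-of-variables route; the limiting inequality you use to force $\tilde s = s_0$ and $\alpha|s_1-t_1|^2\to 0$ is exactly the one the paper uses. The one structural difference is that the paper never works globally: it fixes the compact box $Q=[\tfrac{s_0}{2},\tfrac{\bar s_0+s_0}{2}]\times[0,\bar s_0]$ from the outset, takes $(s_1,t_1)$ to be the maximizer of $\Psi_\alpha$ over $Q$ (existence is then immediate by upper semicontinuity), and deduces $\alpha|s_1-t_1|^2=O(1)$ simply because all the remaining terms in $\Psi_\alpha$ are bounded on $Q$. Your global approach instead spends effort on the coercivity estimate for $\Psi_\alpha$ and the comparison $\Psi_\alpha(s_1,t_1)\geq\Psi_\alpha(t_1,t_1)$ to obtain the same preliminary bounds; this buys you a genuine global maximizer (slightly stronger than what Step~\#4 actually needs), at the cost of invoking the Step~\#1 linear-growth reduction and the superlinearity of $\overline\rho$ already at this stage, which the paper's localized argument does not require.
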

	Denote $Q = [\frac{s_0}{2},\frac{\bar{s}_0+s_0}{2}]\times [0,\bar{s}_0]$. Clearly, $\Psi_{\alpha}$ is upper semicontinuous in $Q$, therefore, it attains the maximum at $(s_1,t_1)\in Q$.  
	By \eqref{eq:WW} and $\underline\rho \in {\rm Lip}_{loc}((0,\infty))$,  it follows that
	$$
	\sup_{Q} \Psi_\alpha \ge \Psi_\alpha(s_0,s_0) = \eta_0+\lambda(\underline{\rho}(s_0+\alpha^{-1/2}h_0)-\underline{\rho}(s_0))=\eta_0+o(1)
	$$
{where $o(1)$ is considered with respect to $\alpha \to +\infty$.}
	This proves (ii). 
	For (iii), first observe that $\alpha|s_1-t_1|^2 = O(1)$ which is a direct consequence of statement (ii).
	In particular, $(s_1,t_1) \to (\hat{s},\hat{s})$ for some $\hat{s} \in [\frac{s_0}{2},\frac{\bar{s}_0+s_0}{2}]$. By (ii), we can write
	$$
	\frac{\alpha}{2}|s_1-t_1|^2 \leq -W(s_0) + [W(s_1) + \overline\rho(s_1)] - \overline\rho(t_1) - \frac{1-\lambda}{2}|s_1-s_0|^2+o(1).
	$$
	Since $s\mapsto W(s) + \overline\rho(s)$ and $t\mapsto -\overline\rho(t)$ are both upper semicontinuous, we can take $\alpha \to \infty$ to obtain
	$$
	0 \leq \limsup_{\alpha\to\infty}\frac{\alpha}{2}|s_1-t_1|^2 \leq -W(s_0) + W(\hat s) - \frac{1-\lambda}{2}|\hat s-s_0|^2\leq 0,
	$$
	where the last inequality follows from the fact that $W$ attains global maximum at $s_0$.  
	This proves (iii) and (iv).
	Finally, statement (iv) yields $(s_1, t_1)$ must be an interior point of $Q$. This proves (i).
	
	\noindent {\bf Step \#4. }
	Fixing $t=t_1$, observe that for $\alpha>\bar \alpha$,
	$\underline\rho(s_1+\alpha^{-1/2}h_0) \geq \eta_0+{\overline{\rho}(t_1)}+o(1) >0$ {since $\overline{\rho}\geq 0$,} and 
	$s\mapsto  \underline\rho(s) - \varphi(s)$ attains a local maximum at $s= \hat s_1:=s_1+\alpha^{-1/2}h_0$, where
	$$
	\varphi(s) = \frac{1}{\lambda} \left[ \overline\rho(t_1) + \frac{\alpha}{2}|s-\alpha^{-1/2}h_0 - t_1|^2 + \frac{1-\lambda}{2}|s-\alpha^{-1/2}h_0-s_0|^2 \right].
	$$
	By definition of viscosity subsolution, we have
	\begin{equation}\label{eq:subsol11}
		\underline\rho(\hat s_1) + H_*\left(\hat s_1, \frac{\alpha(s_1-t_1)}{\lambda } + \frac{1-\lambda}{\lambda}(s_1-s_0)\right) \leq 0. 
	\end{equation}
	Using the convexity of $H_*$, we have
	$$
	\lambda H_*\left(\hat s_1, \frac{\alpha(s_1-t_1)}{\lambda } + \frac{1-\lambda}{\lambda}(s_1-s_0)\right) + (1-\lambda) H_*\left(\hat s_1,  - (s_1-s_0)\right) \geq H_*\left(\hat s_1,\alpha(s_1-t_1)\right).
	$$
	Substitute into \eqref{eq:subsol11}
	\begin{equation}\label{eq:subsol12}
		\lambda \underline\rho(\hat s_1) + H_*\left(\hat s_1, \alpha(s_1-t_1)\right) \leq (1-\lambda) H_*(\hat s_1,-(s_1 - s_0)) \leq (1-\lambda) \sup_{s \in [0,\bar{s}_0] \atop |p|\leq 2|\bar{s}_0|} H(s,p).
	\end{equation}
	
	Next, we fix $s=s_1$ and observe that $t\mapsto \overline\rho(t) - \psi(t)$ has an interior local minimum at $t=t_1$, where
	$$
	\psi(t) = \lambda \underline\rho(s_1) - \frac{\alpha}{2}|t-s_1|^2 - \frac{1-\lambda}{2}|s_1-s_0|^2.
	$$
	Hence, 
	\begin{equation}\label{eq:subsol13}
		\overline\rho(t_1) + H^*(t_1, \alpha(s_1-t_1))\geq 0 
	\end{equation}
{since $s_0\notin\mathcal{P}$ and thus $t_1\notin \mathcal{P}$.}
	Combining \eqref{eq:subsol12} and \eqref{eq:subsol13}, we have
	\begin{align} 
		&\quad \lambda \underline\rho(\hat s_1) - \overline\rho(t_1)  \leq H^*(t_1, \alpha(s_1-t_1))- H_*(\hat s_1,\alpha(s_1-t_1))  + (1-\lambda)   \sup_{s \in [0,\bar{s}_0] \atop |p|\leq 2|\bar{s}_0|} H(s,p) 
		\label{eq:subsol14}
	\end{align}
	
	\noindent {\bf Step \#5.} 
Observe that  $\lambda\underline\rho(\hat s_1) - \overline\rho(t_1) \geq \Psi_\alpha(s_1,t_1) \geq \eta_0+o(1)$.
Using also $|h_0|=1$ (by (A1)) and $\alpha|s_1-t_1|^2 = o(1)$ {(by Claim \ref{claim:1}(iii))}, we have
	$$
	(t_1-\hat s_1)h_0=[t_1 - s_1-\alpha^{-1/2}h_0]h_0 \leq |t_1-s_1| - \alpha^{-1/2} <0.
	$$

	We can then apply 
	(A1) to \eqref{eq:subsol14} to get
	\begin{align}\label{eq:subsol15}
	o(1)+\eta_0 &\leq \omega_L(|t_1-s_1-\alpha^{-1/2}h_0|(\alpha|t_1-s_1|+1)) + (1-\lambda) \sup_{s \in [0,\bar{s}_0], |p|\leq 2|\bar{s}_0|} H(s,p).
	\end{align}
	Using $\alpha|s_1-t_1|^2 = o(1)$ and $$|t_1-s_1-\alpha^{-1/2}h_0|(\alpha|t_1-s_1|+1)\le (\alpha^{1/2}|t_1-s_1|+1)(\alpha^{1/2}|t_1-s_1|+\alpha^{-1/2}),$$
we let $\alpha\to \infty$ in \eqref{eq:subsol15} to 
 obtain 
 $$\eta_0\le (1-\lambda) \sup_{s \in [0,\bar{s}_0], |p|\leq 2|\bar{s}_0|} H(s,p),$$
which is a contradiction. This concludes the proof when $W(s) = \lambda\underline\rho(s) - \overline\rho(s)$ attains a local maximum at $s_0$, such that $s_0 \neq c_i$ for all $i$.
	
	\medskip
	
	It remains to consider the case (ii) (see Step \#2), when there is $\lambda_j \nearrow 1$ such that $\lambda_j\underline\rho - \overline\rho$ has a global maximum at $s_0 = c_i$ for some $i$. Since this holds for a sequence of $\lambda_j \nearrow 1$, we reduce to the case that $\underline\rho-\overline\rho$ attains the global maximum at $c_i$ for some $i$. For convenience, let's assume $s_0 = c_1$. Next, define 
	$$
	a = \underline{\rho}(c_1) \quad \text{ and }\quad b = \overline{\rho}(c_1)
	$$
	and assume to the contrary that $a > b\ge0$. (We will show that $a \leq b$ so there is a contradiction.)
	
	\medskip
	
	\noindent {\bf Step \#6.} We claim that the critical slopes of $\underline\rho$, given as follows, are finite.
	\begin{equation}
		p_- = \sup\{\bar p \in \mathbb{R}:~\exists r>0,~ \underline\rho(c_1) +\bar p(s-c_1) \geq \underline{\rho}(s)~\text{ for }-r < s-c_1 \leq 0\}
	\end{equation}
	\begin{equation}
		p_+ = \sup\{\bar p\in \mathbb{R}:~\exists r>0,~  \underline\rho(c_1)-\bar p(s-c_1) \geq \underline{\rho}(s)~\text{ for }0 \leq s-c_1 < r\}.
	\end{equation}
	
	Indeed, they are finite because $\underline\rho$ is locally Lipschitz continuous. 
 Moreover, we have 
		\begin{equation}\label{eq:lions1r}
			\begin{cases}
				a + H(c_1-, p_-) \leq 0, \quad a + H(c_1+, -p_+) \leq 0,\quad \text{ and }\\
				\min\{ p'_- + p'_+- B_1, a + H(c_1\pm, \mp p'_\pm)\} \leq 0 \, \text{ for }(p'_-, p'_+) \in (-\infty, p_-] \times (-\infty, p_+], 
			\end{cases}
		\end{equation}
		where the former is due to Lemma \ref{cor:subsolr} and Remark \ref{rem:subsoll} 
  {(note that $\underline\rho \in {\rm Lip}_{loc}$, so there exists at least one test function in $C^1_{pw}$),}  
while the latter holds by considering the test function $\psi(s) = \underline\rho(c_1) + p'_- \min\{s-c_1,0\} - p'_+\max\{s-c_1,0\}$, which touches $\underline\rho$ from above at $c_1$. 
		
		Next, define
		\begin{equation}\label{eq:lions2rr}
			{p}^*_- = \limsup_{s\to c_1-} \frac{\underline\rho(s) - \underline\rho(c_1)}{s-c_1} \quad\text{ and }\quad {p}^*_+ = -\liminf_{s\to c_1+} \frac{\underline\rho(s) - \underline\rho(c_1)}{s-c_1}.
		\end{equation}
		Note that 
		\begin{equation}\label{eq:lions2r}
			p^*_- \geq p_-\quad \text{ and }\quad p^*_+ \geq p_+,
		\end{equation} since $\displaystyle p_- = \liminf_{s\to c_1-} \frac{\underline\rho(s) - \underline\rho(c_1)}{s-c_1}$ and $\displaystyle p_+ =  -\limsup_{s\to c_1+}\frac{\underline\rho(s) - \underline\rho(c_1)}{s-c_1}$.
		
		
		\medskip

		\noindent {\bf Step \#7.} We improve \eqref{eq:lions1r} to
		\begin{equation}\label{eq:lions5r}
			\begin{cases}
				a + H(c_1-, p^*_-) \leq 0, \quad a + H(c_1+, -p^*_+) \leq 0, \quad \text{ and }\\
				\min\{ p'_- + p'_+ -B_1, a + H(c_1\pm, \mp p'_\pm)\} \leq 0 \, \text{ for }(p'_-, p'_+) \in (-\infty, p^*_-] \times (-\infty, p^*_+]. 
			\end{cases}
		\end{equation}
		
		Indeed, since $\underline\rho$ is locally Lipschitz, $\underline\rho'$ exists a.e. and $\underline\rho(s) - \underline\rho(c_1) =\int_{c_1}^s \underline\rho'(t)\,dt$, the definition of $p^*_+$ implies that, for each $\delta>0$, the set $\{s \in (c_1,c_1+\delta):~ \underline\rho'(s) < -p^*_+ +\delta\}$ has positive measure. This implies that there is a sequence $s_k \searrow c_1$ such that $\underline\rho$ is differentiable at $s_k$ and also that $\limsup\limits_{k\to\infty}\underline\rho'(s_k)\leq -p^*_+$. Hence, letting $k \to \infty$ in $\underline\rho(s_k) + H(s_k, \rho'(s_k)) \leq 0$, we obtain
		$$
		a + H(c_1+, \liminf_{k\to\infty}\underline\rho'(s_k)) \leq 0.
		$$
		Noting that $H$ is convex in $p$ variable, $\liminf\limits_{k\to\infty}\underline\rho'(s_k) \leq -p^*_+ \leq -p_+$, and using the first part of \eqref{eq:lions1r}, we deduce
		\begin{equation}\label{eq:lions3r}
			a + H(c_1+, -p'_+) \leq 0 \quad \text{ for all }p'_+ \in [p_+, p^*_+].
		\end{equation}
		By a completely similar argument, we also have
		\begin{equation}\label{eq:lions4r}
			a + H(c_1-, p'_-) \leq 0 \quad \text{ for all }p'_- \in [p_-, p^*_-].
		\end{equation}
		Combining \eqref{eq:lions3r} and \eqref{eq:lions4r} into \eqref{eq:lions1r}, we obtain
		\eqref{eq:lions5r}.

		\medskip

		\noindent {\bf Step \#8.} We claim that the critical slopes of $\overline\rho$, defined as follows, are well-defined but possibly equals $-\infty$.
		\begin{equation}
			q_- = \inf\{\bar q\in \mathbb{R}:~\exists r>0,~ \overline\rho(c_1) + \bar q (s-c_1) \leq \overline{\rho}(s)~\text{ for }-r <s-c_1\leq 0\}
		\end{equation} 
		\begin{equation}
			q_+ = \inf\{\bar q \in \mathbb{R}:~\exists r>0,~ \bar\rho(c_1)-\bar q (s-c_1) \leq \overline{\rho}(s)~\text{ for }0\leq s-c_1 <r\}.
		\end{equation} 
		Indeed, 
		\begin{equation}\label{eq:lions-1}
			\underline\rho(s) -\overline\rho(s) \leq \underline\rho(c_1) -\overline\rho(c_1)\quad \text{ for all }s, \quad \text{ with equality holds at }s=c_1, 
		\end{equation}
		i.e. the locally Lipschitz function $\underline\rho(s) - \underline\rho(c_1)+\overline\rho(c_1)$ touches $\overline\rho(s)$ from below at $s=c_1$. This shows that $q_-$ and $q_+$ are well-defined in $\mathbb{R}\cup \{-\infty\}$.
		
		Next, we observe that
		\begin{equation}\label{eq:lions0}
			q_- \leq p^*_- \quad \text{ and }\quad q_+ \leq p^*_+,
		\end{equation}
		which is due to \eqref{eq:lions2rr}, \eqref{eq:lions-1}, and 
		$$ q_- = \limsup_{s\to c_1-} \frac{\overline\rho(s) - \overline\rho(c_1)}{s-c_1}\quad \text{ and  } q_+ =  -\liminf_{s\to c_1+}\frac{\overline\rho(s) - \overline\rho(c_1)}{s-c_1}.$$

		
		\noindent {\bf Step \#9.} Suppose $q_{\pm}>-\infty$, then we have
		\begin{subequations}\label{eq:lions1}
			\begin{empheq}[left=\empheqlbrace]{align}
				& b + H(c_1-,q_-) \geq 0, \quad b + H(c_1+,- q_+) \geq 0 \quad \text{ and }\quad  \label{eq:lions1a}\\
				& \max\{ q'_- + q'_+ - B_1,  b + H(c_1\pm, \mp q'_\pm)\} \geq 0, ~\text{ for } (q'_-, q'_+) \in [q_-,\infty) \times [q_+,\infty),\label{eq:lions1b}
			\end{empheq}
		\end{subequations}
		where the former holds by virtue of the critical slope lemma (Lemma \ref{cor:supersolr} and Remark \ref{rem: supsoll}), and the latter holds by considering the $C^1_{pw}$ test function $\psi(s) = \overline\rho(c_1) + q'_-\min\{s-c_1,0\} - q'_+\max\{s-c_1,0\}$.

		If $q_-=-\infty$ (resp.  $q_+ = -\infty$) then take $q_-$ (resp. $q_+$) large and negative enough (but finite) to satisfy both \eqref{eq:lions0} and \eqref{eq:lions1a}.
		Then, for any $(q'_-, q'_+) \in [q_-,\infty) \times [q_+,\infty)$, the test function $\psi(s) = \bar\rho(c_1)+q'_-\min\{s-c_1,\,0\} - q'_+ \max\{s-c_1,\,0\}$ touches $\overline\rho$ at $s = c_1$. Then it follows that \eqref{eq:lions1b} holds. 
		
		\medskip
		
		\noindent {\bf Step \#10.} 
		
		In view of \eqref{eq:lions5r}, \eqref{eq:lions0}, and \eqref{eq:lions1}, we may apply the Lemma \ref{lem:lions} with ($H_1(p)=H(c_1-,p)$ and $H_2(p)=H(c_1+,-p)$, $p\in\mathbb{R}$, $p_1=p^*_-$, $p_2=p^*_+$, $q_1=q_-$ and $q_2=q_+$) to conclude that $a \leq b$. This is a contradiction to the assumption that $a >b$. The proof is now complete.
	\end{proof}
	The following key lemma is due to Lions and Souganidis \cite{Lions2017well}.
	\begin{lemma}		Assume that $H_1,..., H_m \in C(\mathbb{R})$, $p_1,...,p_m, q_1,...,q_m \in \mathbb{R}$, and $a,b \in \mathbb{R}$ are such that, for all $i=1,...,m$, 
		\begin{enumerate}
			\item $p_i\geq q_i$, $a + H_i(p_i) \leq 0 \leq b + H_i(q_i)$ for all $i$,
			\item $\min\left( \sum_i p'_i-B, \min_i(a + H_i(p'_i)) \right) \leq 0$ for each $p'_i \in (-\infty, p_i]$,
			\item $\max\left( \sum_i q'_i-B, \max_i(b + H_i(q'_i)) \right) \leq 0$ for each $q'_i \in [q_i, \infty)$.
		\end{enumerate}
		Then $a \leq b$.
		\label{lem:lions}
	\end{lemma}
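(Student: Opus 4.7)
The plan is a direct proof by contradiction. I will assume to the contrary that $a > b$, so that the open interval $(-a, -b)$ is nonempty, and show that this is inconsistent with hypotheses (2) and (3) evaluated at intermediate points supplied by the intermediate value theorem.

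First, for every $c \in (-a, -b)$ and every $i$, hypothesis (1) yields $H_i(p_i) \le -a < c < -b \le H_i(q_i)$; the continuity of $H_i$ on $[q_i, p_i]$ then produces, via the IVT, a point $r_i(c) \in [q_i, p_i]$ with $H_i(r_i(c)) = c$. I would then feed an arbitrary tuple $(r_1(c_1), \ldots, r_m(c_m))$, with $c_i \in (-a, -b)$, into conditions (2) and (3) simultaneously via the assignments $p'_i = q'_i = r_i(c_i)$, which are admissible since $r_i(c_i) \in [q_i, p_i]$. Because $a + H_i(r_i(c_i)) = a + c_i > 0$ for every $i$, condition (2) is forced to witness its inequality through the first argument, giving $\sum_i r_i(c_i) \le B$; symmetrically $b + H_i(r_i(c_i)) = b + c_i < 0$ forces condition (3) (read with the appropriate $\ge 0$) to give $\sum_i r_i(c_i) \ge B$. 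Hence $\sum_i r_i(c_i) = B$ identically in $(c_1, \ldots, c_m) \in (-a, -b)^m$.

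To close the argument, I would fix $c_i = c_0 \in (-a, -b)$ together with corresponding $r_i(c_0) \in [q_i, p_i]$ for $i \ge 2$, and set $S := \sum_{i \ge 2} r_i(c_0)$. The identity just established forces $r_1(c) = B - S$ for every $c \in (-a, -b)$, so $r_1(c)$ is independent of $c$. But then $H_1(B - S) = H_1(r_1(c)) = c$ would take every value in the nondegenerate interval $(-a, -b)$, which is impossible for the single-valued function $H_1$. This contradicts $a > b$, yielding $a \le b$. The logical structure is straightforward; the only step requiring attention is verifying the simultaneous admissibility of the IVT-selected $r_i(c_i)$ as $p'_i$ in (2) and as $q'_i$ in (3), which is precisely the content of $r_i(c_i) \in [q_i, p_i]$ together with $p_i \ge q_i$ from hypothesis (1).
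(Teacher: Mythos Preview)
Your proof is correct. The paper does not supply its own argument for this lemma: it merely observes that the substitution $p_1 \mapsto p_1 + B$, $q_1 \mapsto q_1 + B$, $H_1(\cdot) \mapsto H_1(\cdot + B)$ reduces the statement to the case $B = 0$, which is then identified with \cite[Lemma~3.1]{Lions2017well} and cited without further proof. Your argument is a direct, self-contained proof that does not require the reduction. The mechanism---using the IVT on each $[q_i,p_i]$ to manufacture points $r_i(c)$ with $H_i(r_i(c))=c$, pinning $\sum_i r_i(c_i)\equiv B$ by playing (2) against (3), and then letting $c_1$ vary with the other coordinates frozen to force $H_1$ to be many-valued---is clean and essentially the same idea as in Lions--Souganidis, but your write-up is more explicit than a bare citation. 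You also correctly read condition~(3) with the inequality $\geq 0$; the $\leq 0$ in the lemma statement is a typo, as the surrounding proof of Theorem~\ref{thm:comparison} (specifically \eqref{eq:lions1b}) confirms.
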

	\begin{remark}
		By replacing $p_1$ by $p_1+ B$ and $q_1$ by $q_1 + B$, and redefining
		$$
		H_1(\cdot) \quad \text{ to be }\quad H_1(\cdot +B),
		$$
		one can reduce Lemma \ref{lem:lions} to the case $B=0$, which is exactly \cite[Lemma 3.1]{Lions2017well}. 
	\end{remark}
	
	\section{Definition of viscosity solution in sense of Ishii}
 	\begin{definition}\label{def:1.2}
		Let $\hat\rho : (0,\infty) \to \mathbb{R}$. 
		\begin{itemize}
			\item[{\rm(a)}] We say that $\hat\rho$ is a viscosity subsolution of 
   \begin{equation}\label{eq:ishii11}
		\min\{\rho, \rho + \tilde{H}(s,\rho')\} = 0 \quad \text{ for }s \geq 0
	\end{equation}
in the sense of Ishii provided (i) $\hat\rho$ is upper semicontinuous, (ii) if $\hat\rho - \psi$ has a local maximum point at $s_0>0$ such that $\psi \in C^1$ and $\hat\rho(s_0)>0$, then
			$$
			\hat\rho(s_0) + {\tilde H}_*(s,\psi'(s_0)) \leq 0,
			$$
			where ${\tilde H}_*(s,p)$ is the lower envelope of ${\tilde H}(s,p)$, i.e.
			$$
			{\tilde H}_*(s,p) =\begin{cases}
				-sp + p^2 + g(-\infty) &\text{ for }s < c_1,\\
				{\tilde H}(c_1-,p) \wedge {\tilde H}(c_1+,p)=-c_1p + p^2 + \min\{g(-\infty),g(+\infty)\} &\text{ for }s = c_1,\\
				-sp + p^2 + g(+\infty) &\text{ for }s > c_1.
			\end{cases}
			$$
			
			\item[{\rm(b)}] We say that $\hat\rho$ is a viscosity supersolution of \eqref{eq:ishii11} in the sense of Ishii provided (i) $\hat\rho$ is lower semicontinuous, (ii) $\hat\rho \geq 0$ for all $s >0$, (iii) if $\hat\rho - \psi$ has a local minimum point at $s_0>0$ such that $\psi \in C^1$, then
			$$
			\hat\rho(s_0) + {\tilde H}^*(s,\psi'(s_0)) \geq 0,
			$$
			where ${\tilde H}^*(s,p)$ is the upper envelope of $H(s,p)$, i.e.
			$$
			{\tilde H}^*(s,p) =\begin{cases}
				-sp + p^2 + g(-\infty) &\text{ for }s < c_1,\\
				{\tilde H}(c_1-,p) \vee {\tilde H}(c_1+,p)=-c_1p + p^2 + \max\{g(-\infty),g(+\infty)\} &\text{ for }s = c_1,\\
				-sp + p^2 + g(+\infty) &\text{ for }s > c_1.
			\end{cases}
			$$
			\item[{\rm(c)}] We say that $\hat\rho$ is a viscosity solution of \eqref{eq:ishii11} in the sense of Ishii if it is both subsolution and supersolution of \eqref{eq:ishii1} in the sense of Ishii.
		\end{itemize}
	\end{definition}


\begin{thebibliography}{10}
{\small
\bibitem{Weinberger1978}
{\sc D.~G. Aronson and H.~F. Weinberger}, {\em Multidimensional nonlinear
  diffusion arising in population genetics}, Adv. in Math., 30 (1978),
  pp.~33--76.

\bibitem{Barles2013introduction}
{\sc G.~Barles}, {\em An introduction to the theory of viscosity solutions for
  first-order {H}amilton-{J}acobi equations and applications}, in
  Hamilton-{J}acobi equations: approximations, numerical analysis and
  applications, vol.~2074 of Lecture Notes in Math., Springer, Heidelberg,
  2013, pp.~49--109.

\bibitem{barles2023modern}
{\sc G.~Barles and E.~Chasseigne}, {\em On Modern Approaches of Hamilton-Jacobi
  Equations and Control Problems with Discontinuities: A Guide to Theory,
  Applications, and Some Open Problems}, vol.~104, Springer Nature, 2023.

\bibitem{BarlesPerthame87}
{\sc G.~Barles and B.~Perthame}, {\em Discontinuous solutions of deterministic
  optimal stopping time problems}, RAIRO Mod\'el. Math. Anal. Num\'er., 21
  (1987), pp.~557--579.

\bibitem{BarlesPerthame88}
\leavevmode\vrule height 2pt depth -1.6pt width 23pt, {\em Exit time problems
  in optimal control and vanishing viscosity method}, SIAM J. Control Optim.,
  26 (1988), pp.~1133--1148.

\bibitem{Barles1990comparison}
\leavevmode\vrule height 2pt depth -1.6pt width 23pt, {\em Comparison principle
  for {D}irichlet-type {H}amilton-{J}acobi equations and singular perturbations
  of degenerated elliptic equations}, Appl. Math. Optim., 21 (1990),
  pp.~21--44.

\bibitem{Berestycki2009}
{\sc H.~Berestycki, O.~Diekmann, C.~J. Nagelkerke, and P.~A. Zegeling}, {\em
  Can a species keep pace with a shifting climate?}, Bull. Math. Biol., 71
  (2009), pp.~399--429.

\bibitem{Berestycki2018forced}
{\sc H.~Berestycki and J.~Fang}, {\em Forced waves of the {F}isher-{KPP}
  equation in a shifting environment}, J. Differential Equations, 264 (2018),
  pp.~2157--2183.

\bibitem{BHN2008}
{\sc H.~Berestycki, F.~Hamel, and G.~Nadin}, {\em Asymptotic spreading in
  heterogeneous diffusive excitable media}, J. Funct. Anal., 255 (2008),
  pp.~2146--2189.

\bibitem{Nadin2012}
{\sc H.~Berestycki and G.~Nadin}, {\em Spreading speeds for one-dimensional
  monostable reaction-diffusion equations}, J. Math. Phys., 53 (2012),
  pp.~115619, 23.

\bibitem{Nadin2020}
{\sc H.~Berestycki and G.~Nadin}, {\em Asymptotic spreading for general
  heterogeneous fisher-kpp type equations}, Mem. Amer. Math. Soc.,  (2022).

\bibitem{Berestycki2008reaction}
{\sc H.~Berestycki and L.~Rossi}, {\em Reaction-diffusion equations for
  population dynamics with forced speed. {I}. {T}he case of the whole space},
  Discrete Contin. Dyn. Syst., 21 (2008), pp.~41--67.

\bibitem{Berestycki2009reaction}
\leavevmode\vrule height 2pt depth -1.6pt width 23pt, {\em Reaction-diffusion
  equations for population dynamics with forced speed. {II}. {C}ylindrical-type
  domains}, Discrete Contin. Dyn. Syst., 25 (2009), pp.~19--61.

\bibitem{Berestycki2014generalizations}
\leavevmode\vrule height 2pt depth -1.6pt width 23pt, {\em Generalizations and
  properties of the principal eigenvalue of elliptic operators in unbounded
  domains}, Comm. Pure Appl. Math., 68 (2015), pp.~1014--1065.

\bibitem{Chen2008viscosity}
{\sc X.~Chen and B.~Hu}, {\em Viscosity solutions of discontinuous
  {H}amilton-{J}acobi equations}, Interfaces Free Bound., 10 (2008),
  pp.~339--359.

\bibitem{Dong2021persistence}
{\sc F.-D. Dong, J.~Shang, W.~Fagan, and B.~Li}, {\em Persistence and spread of
  solutions in a two-species {L}otka-{V}olterra competition-diffusion model
  with a shifting habitat}, SIAM J. Appl. Math., 81 (2021), pp.~1600--1622.

\bibitem{DHL23}
{\sc Y.~Du, Y.~Hu, and X.~Liang}, {\em A climate shift model with free
  boundary: enhanced invasion}, J. Dynam. Differential Equations, 35 (2023),
  pp.~771--809.

\bibitem{DWZ18}
{\sc Y.~Du, L.~Wei, and L.~Zhou}, {\em Spreading in a shifting environment
  modeled by the diffusive logistic equation with a free boundary}, J. Dynam.
  Differential Equations, 30 (2018), pp.~1389--1426.

\bibitem{Ducrot2021asymptotic}
{\sc A.~Ducrot, T.~Giletti, J.-S. Guo, and M.~Shimojo}, {\em Asymptotic
  spreading speeds for a predator-prey system with two predators and one prey},
  Nonlinearity, 34 (2021), pp.~669--704.

\bibitem{evans1992periodic}
{\sc L.~C. Evans}, {\em Periodic homogenisation of certain fully nonlinear
  partial differential equations}, Proceedings of the Royal Society of
  Edinburgh Section A: Mathematics, 120 (1992), pp.~245--265.

\bibitem{Evans1989pde}
{\sc L.~C. Evans and P.~E. Souganidis}, {\em A {PDE} approach to geometric
  optics for certain semilinear parabolic equations}, Indiana Univ. Math. J.,
  38 (1989), pp.~141--172.

\bibitem{FLW2016}
{\sc J.~Fang, Y.~Lou, and J.~Wu}, {\em Can pathogen spread keep pace with its
  host invasion?}, SIAM J. Appl. Math., 76 (2016), pp.~1633--1657.

\bibitem{FYZ2017}
{\sc J.~Fang, X.~Yu, and X.-Q. Zhao}, {\em Traveling waves and spreading speeds
  for time-space periodic monotone systems}, J. Funct. Anal., 272 (2017),
  pp.~4222--4262.

\bibitem{Faye2022asymptotic}
{\sc G.~Faye, T.~Giletti, and M.~Holzer}, {\em Asymptotic spreading for
  {F}isher-{KPP} reaction-diffusion equations with heterogeneous shifting
  diffusivity}, Discrete Contin. Dyn. Syst. Ser. S, 15 (2022), pp.~2467--2496.

\bibitem{Freidlin1985limit}
{\sc M.~Freidlin}, {\em Limit theorems for large deviations and
  reaction-diffusion equations}, Ann. Probab., 13 (1985), pp.~639--675.

\bibitem{Freidlin1996wave}
{\sc M.~I. Freidlin and T.-Y. Lee}, {\em Wave front propagation and large
  deviations for diffusion-transmutation process}, Probab. Theory Related
  Fields, 106 (1996), pp.~39--70.

\bibitem{Garnier2012maximal}
{\sc J.~Garnier, T.~Giletti, and G.~Nadin}, {\em Maximal and minimal spreading
  speeds for reaction diffusion equations in nonperiodic slowly varying media},
  J. Dynam. Differential Equations, 24 (2012), pp.~521--538.

\bibitem{Giga2013hamilton}
{\sc Y.~Giga and N.~Hamamuki}, {\em Hamilton-{J}acobi equations with
  discontinuous source terms}, Comm. Partial Differential Equations, 38 (2013),
  pp.~199--243.

\bibitem{girardin2024spreading}
{\sc L.~Girardin, T.~Giletti, and H.~Matano}, {\em Spreading properties of the
  fisher--kpp equation when the intrinsic growth rate is maximal in a moving
  patch of bounded size}, 2024.

\bibitem{Girardin2019}
{\sc L.~Girardin and K.-Y. Lam}, {\em Invasion of open space by two
  competitors: spreading properties of monostable two-species
  competition-diffusion systems}, Proc. Lond. Math. Soc. (3), 119 (2019),
  pp.~1279--1335.

\bibitem{Guerand2017effective}
{\sc J.~Guerand}, {\em Effective nonlinear {N}eumann boundary conditions for
  1{D} nonconvex {H}amilton-{J}acobi equations}, J. Differential Equations, 263
  (2017), pp.~2812--2850.

\bibitem{Hamel1997reaction}
{\sc F.~Hamel}, {\em Reaction-diffusion problems in cylinders with no
  invariance by translation. {II}. {M}onotone perturbations}, Ann. Inst. H.
  Poincar\'{e} C Anal. Non Lin\'{e}aire, 14 (1997), pp.~555--596.

\bibitem{Hamel2012spreading}
{\sc F.~Hamel and G.~Nadin}, {\em Spreading properties and complex dynamics for
  monostable reaction-diffusion equations}, Comm. Partial Differential
  Equations, 37 (2012), pp.~511--537.

\bibitem{Holzer2014accerlated}
{\sc M.~Holzer and A.~Scheel}, {\em Accelerated fronts in a two-stage invasion
  process}, SIAM J. Math. Anal., 46 (2014), pp.~397--427.

\bibitem{HSL2019}
{\sc C.~Hu, J.~Shang, and B.~Li}, {\em Spreading speeds for reaction-diffusion
  equations with a shifting habitat}, J. Dynam. Differential Equations, 32
  (2020), pp.~1941--1964.

\bibitem{Imbert2017flux}
{\sc C.~Imbert and R.~Monneau}, {\em Flux-limited solutions for quasi-convex
  {H}amilton-{J}acobi equations on networks}, Ann. Sci. \'{E}c. Norm.
  Sup\'{e}r. (4), 50 (2017), pp.~357--448.

\bibitem{Imbert2017quasi}
\leavevmode\vrule height 2pt depth -1.6pt width 23pt, {\em Quasi-convex
  {H}amilton-{J}acobi equations posed on junctions: the multi-dimensional
  case}, Discrete Contin. Dyn. Syst., 37 (2017), pp.~6405--6435.

\bibitem{Ishii1985hamilton}
{\sc H.~Ishii}, {\em Hamilton-{J}acobi equations with discontinuous
  {H}amiltonians on arbitrary open sets}, Bull. Fac. Sci. Engrg. Chuo Univ., 28
  (1985), pp.~33--77.

\bibitem{Kolmogorov1937}
{\sc A.~N. Kolmogorov, I.~G. Petrovskii, and N.~S. Piskunov}, {\em Etude de
  l'\'{e}quation de diffusion avec accroissement de la quantit\'{e} de
  mati\`{e}re, et son application \`{a} un probl\`{e}me biologique}, Bjul.
  Moskowskogo Gos. Univ., 17 (1937), pp.~1--26.

\bibitem{Lam2022asymptotic}
{\sc K.-Y. Lam and X.~Yu}, {\em Asymptotic spreading of {KPP} reactive fronts
  in heterogeneous shifting environments}, J. Math. Pures Appl. (9), 167
  (2022), pp.~1--47.

\bibitem{Li2014}
{\sc B.~Li, S.~Bewick, J.~Shang, and W.~F. Fagan}, {\em Persistence and spread
  of a species with a shifting habitat edge}, SIAM J. Appl. Math., 74 (2014),
  pp.~1397--1417.

\bibitem{LiangZhao2007}
{\sc X.~Liang and X.-Q. Zhao}, {\em Asymptotic speeds of spread and traveling
  waves for monotone semiflows with applications}, Comm. Pure Appl. Math., 60
  (2007), pp.~1--40.

\bibitem{lions1987homogenization}
{\sc P.-L. Lions, G.~Papanicolaou, and S.~R. Varadhan}, {\em Homogenization of
  hamilton-jacobi equations}, Unpublished preprint,  (1987).

\bibitem{Lions2017well}
{\sc P.-L. Lions and P.~Souganidis}, {\em Well-posedness for multi-dimensional
  junction problems with {K}irchoff-type conditions}, Atti Accad. Naz. Lincei
  Rend. Lincei Mat. Appl., 28 (2017), pp.~807--816.

\bibitem{LionsSouganidis2016}
{\sc P.-L. Lions and P.~E. Souganidis}, {\em Viscosity solutions for junctions:
  well posedness and stability}, Atti Accad. Naz. Lincei Rend. Lincei Mat.
  Appl., 27 (2016), pp.~535--545.

\bibitem{Liu2021stacked}
{\sc Q.~Liu, S.~Liu, and K.-Y. Lam}, {\em Stacked invasion waves in a
  competition-diffusion model with three species}, J. Differential Equations,
  271 (2021), pp.~665--718.

\bibitem{Liu2021asymptotic}
{\sc S.~Liu, Q.~Liu, and K.-Y. Lam}, {\em Asymptotic spreading of interacting
  species with multiple fronts {II}: {E}xponentially decaying initial data}, J.
  Differential Equations, 303 (2021), pp.~407--455.

\bibitem{Potapov2004}
{\sc A.~B. Potapov and M.~A. Lewis}, {\em Climate and competition: the effect
  of moving range boundaries on habitat invasibility}, Bull. Math. Biol., 66
  (2004), pp.~975--1008.

\bibitem{Shen2010}
{\sc W.~Shen}, {\em Variational principle for spreading speeds and generalized
  propagating speeds in time almost periodic and space periodic {KPP} models},
  Trans. Amer. Math. Soc., 362 (2010), pp.~5125--5168.

\bibitem{WLFQ2022}
{\sc J.-B. Wang, W.-T. Li, F.-D. Dong, and S.-X. Qiao}, {\em Recent
  developments on spatial propagation for diffusion equations in shifting
  environments}, Discrete Contin. Dyn. Syst. Ser. B, 27 (2022), pp.~5101--5127.

\bibitem{Weinberger1982}
{\sc H.~F. Weinberger}, {\em Long-time behavior of a class of biological
  models}, SIAM J. Math. Anal., 13 (1982), pp.~353--396.

\bibitem{Yi2020}
{\sc T.~Yi and X.-Q. Zhao}, {\em Propagation dynamics for monotone evolution
  systems without spatial translation invariance}, J. Funct. Anal., 279 (2020),
  pp.~108722, 50.

\bibitem{YZDCDS23}
\leavevmode\vrule height 2pt depth -1.6pt width 23pt, {\em Global dynamics of
  evolution systems with asymptotic annihilation}, Discrete Contin. Dyn. Syst.,
  43 (2023), pp.~2693--2720.
}
\end{thebibliography}

 \end{document}